\documentclass[11pt,a4paper]{amsart}
\usepackage[square, numbers, comma]{natbib}
\usepackage{amssymb}
\usepackage{amsmath}
\usepackage{amsthm}
\usepackage{amsfonts}
\usepackage{dsfont}
\usepackage{a4wide}
\usepackage{pdfsync}
\usepackage{hyperref}
\usepackage{enumerate}
\usepackage{graphicx}
\usepackage{subfigure}
\usepackage{color}
\usepackage{cancel}
\usepackage{multirow}
\usepackage{tikz}
\usepackage{mathrsfs}
\usepackage{array}
\usepackage{esint}
\usepackage{enumitem}
\usepackage[justification=centering]{caption}
\usepackage{float}
\usepackage{comment}
\usepackage{caption}
\usepackage{dsfont}

\usepackage{mathtools}
\mathtoolsset{showonlyrefs}
\newtheorem{theorem}{Theorem}[section]
\newtheorem{proposition}[theorem]{Proposition}
\newtheorem{corollary}[theorem]{Corollary}
\newtheorem{lemma}[theorem]{Lemma}

\theoremstyle{remark}
\newtheorem{remark}[theorem]{Remark}

\theoremstyle{definition}

\makeatletter
\renewcommand{\tocsection}[3]{\indentlabel{\@ifnotempty{#2}{\ignorespaces #1 #2.\,}}#3}

\def\l@subsection{\@tocline{2}{0pt}{2em}{}{}}
\def\l@subsection{\@tocline{2}{0pt}{2em}{}{}}
\makeatother

\hypersetup{
	colorlinks   = true, %Colours links instead of ugly boxes
	urlcolor     = blue, %Colour for external hyperlinks
	linkcolor    = blue, %Colour of internal links
	citecolor   = red %Colour of citations
}
\definecolor{darkblue}{rgb}{0.05, .05, .9}
\definecolor{darkgreen}{rgb}{0.1, .65, .1}
\definecolor{darkred}{rgb}{0.8,0,0}

\newcommand{\Rd}{\mathbb{R}^d}
\newcommand{\lh}{\Delta_{\mathbb{H}^d}}

\newcommand{\Hd}{\mathbb{H}^d}

\newcommand{\dhh}{\mathbb{D}^{d}}
\newcommand{\uh}{\mathbb{U}^{d}}

\newcommand{\fal}{\textrm{for all }}
\newcommand{\co}{c_{0}}

\newcommand{\hs}{\mathbb{H}^d}
\newcommand{\rr}{\rho}

\newcommand{\tw}{\Phi_{c_0}}
\usepackage{etoolbox}
\newcounter{taggedeq}
\pretocmd{\equation}{\stepcounter{taggedeq}}{}{}

\begin{document}

\title[KPP equations in the hyperbolic space]{Traveling-wave behavior for Fisher-KPP equations in the hyperbolic space}

\author[M.\,M.~González]{María del Mar González}
\address[M.\,M.~González]{Departamento de Matem\'aticas, Universidad Aut\'onoma de Madrid \& Instituto de Ciencias Matem\'aticas ICMAT (CSIC-UAM-UCM-UC3M). Campus de Cantoblanco, 28049 Madrid, Spain}
\email[]{mariamar.gonzalezn\@@{}uam.es}
\urladdr{https://matematicas.uam.es/~maria.gonzalez}

\author[I.~Gonz\'alvez]{Irene Gonz\'alvez}
\address[I.~Gonz\'alvez]{ Departamento de Matemáticas,
Universidad del País Vasco / Matematika Saila, Euskal Herriko Unibertsitatea, (UPV/EHU), 48080 Bizkaia, Spain}
\email[]{irene.gonzalvez\@@{}ehu.eus}

\author[F.~Quir\'{o}s]{Fernando Quir\'{o}s}
\address[F.~Quir\'{o}s]{Departamento de Matem\'aticas, Universidad Aut\'onoma de Madrid \& Instituto de Ciencias Matem\'aticas ICMAT (CSIC-UAM-UCM-UC3M). Campus de Cantoblanco, 28049 Madrid, Spain}
\email[]{fernando.quiros\@@{}uam.es}
\urladdr{https://matematicas.uam.es/~fernando.quiros}

\begin{abstract}
    We study the Cauchy problem in the hyperbolic space for the heat equation with a Fisher-KPP type forcing term. Depending on the relative strength of diffusion, measured by the infimum of the spectrum of the Laplace-Beltrami operator, as compared to the growth due to the forcing term, solutions may propagate or vanish as time passes. We prove new results concerning this dichotomy that include the critical case where diffusion and reaction are of the same order. If the initial datum possesses some symmetry (invariance under a  cohomogeneity one subgroup of the group of  isometries of the hyperbolic space), the problem reduces to a unidimensional one. In the case of propagation, the solution to this unidimensional problem converges in shape to an Euclidean traveling wave of minimal speed in an appropriate moving frame. The choice of this frame depends on the subgroup of isometries (elliptic, hyperbolic or parabolic)  under which the initial datum is invariant. In contrast with the Euclidean case, the asymptotic spreading speed (in due coordinates) depends on the dimension, while the coefficient of the logarithmic correction in the location of the front does not, no matter the underlying isometry. 
\end{abstract}

%%%%%%%%%%%%%%

\keywords{Hyperbolic space; Fisher-KPP problem; propagation vs.\,vanishing; traveling-wave behavior; isometries}

\subjclass[2020]
{58J35,	%Heat and other parabolic equation methods for PDEs on manifolds
35K57,  %	Reaction-diffusion equations
58J90,	%Applications of PDEs on manifolds
35C07,	%Traveling-wave solutions
35B40,	%Asymptotic behavior of solutions to PDEs
58K55.	%Asymptotic behavior of solutions to equations on manifolds
}

\maketitle
\begin{center}\begin{minipage}{12cm}{\tableofcontents}\end{minipage}\end{center}

%%%%%%%%%%%%%%%%%%%%%%%%%%%%%%%%%%%%%%%%%%%%%%%%%%%%%%%%%%%%%%%%%%%%%%%%%
%%%%%%%%%%%%%%%%%%%%%%%%%%%%%%%%%%%%%%%%%%%%%%%%%%%%%%%%%%%%%%%%%%%%%%%%%
\section{Introduction and statement of results}
\setcounter{equation}{0}

We study the sharp long-time behavior of solutions to the Cauchy problem for the Fisher-KPP equation posed in the hyperbolic space $\Hd$, $d\geq 2$,
\begin{equation}\label{KPPproblem}\tag{KPP}
    u_{t}=\lh u+f(u)\quad\text{in }\Hd\times \mathbb{R}_{+},\qquad u(\cdot,0)=u_{0}\quad\text{in }\Hd.
\end{equation}
Here $\lh$ is the Laplace-Beltrami operator in $\Hd$ and $\mathbb{R}_+:=(0,\infty)$. Our motivation is to understand how the underlying hyperbolic geometry affects the evolution of solutions to~\eqref{KPPproblem}, in comparison with the Euclidean case.

Throughout the paper we always assume that $0\le u_{0}\le 1$ and that $f$ is a \emph{KPP function},
\begin{align}    
    \label{KPPfunction.1}\tag{$\textup{H}_f$1}
	&f\in C^{2}([0,1]),\quad  f(0)=f(1)=0,\quad  f'(1)<0<f'(0),\quad f(u)>0 \quad \textup{for all } u\in (0,1),    \\
\label{KPPfunction.2}\tag{$\textup{H}_f$2} 
    &f(u)\leq f'(0)u\quad \fal u\in (0,1).
\end{align}
Under these assumptions, problem~\eqref{KPPproblem} has a unique classical bounded solution, which satisfies $0\le u\le1$.
For our main convergence theorem we need to substitute~\eqref{KPPfunction.2} by the stronger condition
\begin{equation}
\label{KPPfunction.2*}\tag{$\textup{H}_f2^*$}
    f'(u)\leq f'(0)\quad\textrm{for all }u\in[0,1].
\end{equation}
The main example of a KPP function, satisfying also hypothesis~\eqref{KPPfunction.2*}, is $f(u):=u(1-u)$, $u\in[0,1]$.  Condition~\eqref{KPPfunction.2*} is  a common requirement in sharp convergence results for the Euclidean problem.

Under assumption~\eqref{KPPfunction.1} on the nonlinearity $f$, the ODE $u'=f(u)$, corresponding to spatially homogeneous solutions to~\eqref{KPPproblem},  has two stationary solutions, one stable, $u\equiv 1$, and the other unstable, $u\equiv 0$.  For general (non‑homogeneous) solutions, however, diffusion interacts with the reaction term, and the stability properties of both equilibria may change; in particular, neither the stability of $u\equiv 1$ nor the instability of $u\equiv 0$ can be taken for granted without further analysis. When the problem is posed in the Euclidean space, diffusion is never strong enough, and all nontrivial solutions converge to $1$ as $t\to\infty$, a situation known as \emph{propagation} (or spreading).  However, in the hyperbolic space, diffusion ---which can be measured in terms of $\lambda_1>0$, the infimum of the spectrum of the Laplace-Beltrami operator--- is stronger than in the Euclidean case, and it  may outweigh the repulsion of the level 0 ---measured in terms of $f'(0)$---, leading solutions to converge to 0 as $t\to\infty$.  This second possibility is known as \emph{vanishing} (or extinction). This had already been noticed in the pioneering work~\cite{Matano} by Matano, Punzo, and Tesei, who proved that, in the case of strong repulsion, i.e., $f'(0)>\lambda_1$,  there is propagation, while if $f'(0)<\lambda_1$, diffusion wins the game and solutions vanish. Here, as a first objective, we further explore  the dichotomy between propagation and extinction, covering in particular the critical case $f'(0)=\lambda_1$, which was left open in~\cite{Matano}.

Another significant difference that arises with the introduction of geometry is that, in the case of spreading,  the propagation speed in $\mathbb H^d$ is slower than in $\mathbb{R}^d$, due to the stronger diffusion  mentioned above. This effect is described as the \textit{drift from infinity} in the work~\cite{Matano}, as it  directly affects the velocity of the moving frame. 

Our second goal is to  exploit the invariances of the hyperbolic space in order to describe the long-time behavior of the solution to~\eqref{KPPproblem} in the spreading scenario. More precisely, if the initial datum has some symmetry (in terms of cohomogeneity one subgroups of isometries of the hyperbolic space), the solution itself has the same symmetry in space for any later time, which reduces the problem to a unidimensional equation.  Let us remark that, differently from the Euclidean case, where such isometries consist of rotations and translations only, the hyperbolic space has a more complex geometry with three types of this type of isometries: elliptic, parabolic, and hyperbolic. We will give a moving coordinate system that takes into account this symmetry, in which  the solution to~\eqref{KPPproblem} with a compactly supported initial data (with respect to the symmetry) converges to a specific (Euclidean) traveling-wave profile of minimal speed with a shift. It turns out that the term taking into account the propagation speed needs to be modified with a logarithmic correction which, contrary to the Euclidean case, stays the same across all its invariances, and does not depend on the dimension. The proof adapts ideas developed by Roquejoffre, Rossi and Roussier-Michon in~\cite{RoquejofferNdimensional} for the Euclidean setting to take into account the difficulties introduced by the underlying geometry.

To explore this in greater detail, we first provide an in‑depth introduction to the well‑established theory of the KPP problem in the Euclidean space, followed by a presentation of the existing theory for the equation posed in the hyperbolic space. We conclude the section with our main results and a discussion of the heuristics that motivate them.

\medskip

\noindent{\bf The KPP problem in $\mathbb{R}^d$.}
We summarize here some landmark results, relevant for this work, from the vast literature on the KPP problem in the Euclidean setting. 

In 1937, Fisher, on the one side, and Kolmogorov, Petrovsky and Piskunov on the other, published simultaneously, but independently, two papers, \cite{Fisher,KPP-article} respectively, in which they describe the evolution of the spatial distribution of a dominant gene within a population. Fisher modeled it by the reaction-diffusion spatial unidimensional equation
\begin{equation*}
	u_{t}=u_{xx}+u(1-u)\quad\textrm{in }\mathbb{R}\times\mathbb{R}_{+},
\end{equation*}
where the term $u(1-u)$ models natural selection as a saturation process, while  the diffusion term $u_{xx}$ accounts for the dispersal of the descendants. On the other hand, Kolmogorov, Petrovsky, and Piskunov proposed a more general formulation:
\begin{equation}
\label{KPPEuclideanOneDimensional}	u_{t}=u_{xx}+f(u)\quad\text{in }\mathbb{R}\times\mathbb{R}_{+},
\end{equation}
where $f$ is a reaction term satisfying condition~\eqref{KPPfunction.1}--\eqref{KPPfunction.2*}. As a consequence, the Cauchy problem
\begin{equation}\label{KPPproblemEuclidean}
	u_{t}=\Delta u+f(u)\quad\text{in }\mathbb{R}^d\times \mathbb{R}_{+},\qquad u(\cdot,0)=u_{0}\quad\text{in }\mathbb{R}^d,
\end{equation}
where  $u_0:\mathbb{R}^d\to[0,1]$ is a given initial condition, is known as the Fisher-KPP problem. For short, we refer to it as the KPP problem in the Euclidean space. This is the Euclidean counterpart of~\eqref{KPPproblem}. 

Being $f$ a KPP function, there is a unique bounded solution $u$ to~\eqref{KPPproblemEuclidean} and $0\leq u\leq 1$ in $ \mathbb{R}^d\times \mathbb{R}_{+}$. Besides,  whenever $u_0$ is not the null function, propagation occurs: the state $u\equiv 1$ invades the state $u\equiv0$; see~\cite{AronsonWeinberger}. 

As proved in~\cite{KPP-article}  (see also~\cite{AronsonWeinberger, Uchiyama-1978}), if $f$ satisfies~\eqref{KPPfunction.1}--\eqref{KPPfunction.2}, equation~\eqref{KPPEuclideanOneDimensional} admits a family of traveling-wave solutions $\{u_c\}_{c\geq c_0}$, that is, solutions of the form $u_c(x,t):=U_{c}(x-ct)$ for all $(x,t)\in\mathbb{R}\times\mathbb{R}_+$, where the minimal speed $c_0$ is given by
\begin{equation}\label{lambda_0} 
    c_0:=2\lambda_{0},\quad\lambda_{0}:=\sqrt{f'(0)}>0.
\end{equation}
For each $c\ge c_0$,  the  traveling-wave profile  $U_c:\mathbb{R}\to\mathbb{R}$ is the unique solution (up to translations) to the ODE
\begin{equation}\label{ODE-tw}	
    U''+cU'+f(U)=0\quad\textrm{in }\mathbb{R}
\end{equation}
that satisfies
\begin{equation}\label{tw-initialdatum}	
     0\le U\le 1,\quad U(-\infty)=\lim\limits_{s\to-\infty}U(s)=1,\quad U(+\infty)=\lim\limits_{s\to+\infty}U(s)=0;
\end{equation}
The profiles $U_c$ are strictly monotone, indeed, $U_c'<0$. 

Among all the translates that solve~\eqref{ODE-tw}--\eqref{tw-initialdatum} with $c=c_0$, we define the \emph{normalized}  traveling-wave  profile of minimal speed $\Phi_{c_0}$  as the unique one that satisfies $\Phi_{c_0}(0)=1/2$. The behavior of this profile at $+\infty$ is given by
\begin{equation}\label{behaviorTw}
    \tw(s)=(s+\kappa)e^{-\lambda_0 s}+O\big(e^{-(\lambda_0+\delta)s}\big),\quad s\to\infty,
\end{equation}
with two constants $\delta>0$ and $\kappa\in\mathbb{R}$.

Kolmogorov, Petrovski and Piskunov proved in~\cite{KPP-article}, assuming~\eqref{KPPfunction.1}--\eqref{KPPfunction.2*}, that the solution~$u$ to ~\eqref{KPPEuclideanOneDimensional}  posed on $\mathbb R$ with initial datum a step function $u_0=\mathds{1}_{(-\infty,0]}$ converges to the traveling-wave profile $\Phi_{c_0}$ in the following sense: after showing that $u_x(\cdot, t)<0$ for all $t>0$ and defining the \emph{centering function} $m:\mathbb{R}_+\to\mathbb{R}$ by $u(m(t),t)=1/2$, they proved that
\begin{equation*}
	u(x+m(t),t)\to\Phi_{c_0}(x)\textrm{ uniformly in }x,\quad m'(t)\to c_0\quad\textrm{as }t\to\infty.
\end{equation*}
This gives a precise description of the propagation of level sets of  $u$ in terms of $m$. The constant $c_0$ can be regarded as the {\it spreading speed} of the species described by the model, since it yields the first term of the expansion of $m$, $m(t)=c_0t+o(t)$, and hence of all level sets. 

What about other initial data? Several years later, Aronson and Weinberger proved in~\cite{AronsonWeinberger-PrimerArticulo(Viejo)}, assuming only~\eqref{KPPfunction.1}--\eqref{KPPfunction.2}, that if~$u_0$ is nontrivial and  compactly supported from the right ($u_{0}(x)=0$ for all $x\geq A>0$), then
\begin{equation}\label{SpreadingSpeedEuclidean}	
    \lim\limits_{t\to\infty}\sup\limits_{\{x>ct\}}u(x,t)=0\quad\textrm{for all }c>c_0,\qquad
	\lim\limits_{t\to\infty}\inf\limits_{\{0<x<ct\}}u(x,t)=1\quad\textrm{for all }0<c<c_0.
\end{equation}
Hence, $c_0$ is also the spreading speed in this case. 

The quest for a more detailed description of the centering term $m$, or equivalently, for  the right moving frame in order to have convergence towards the profile $\Phi_{c_0}$, always under the stronger assumption~\eqref{KPPfunction.1}--\eqref{KPPfunction.2*}, continues with the work of Uchiyama, who proved in \cite{Uchiyama-1978}, under some restrictions on the initial data,   that $m(t)=c_0t- \frac3{c_0}\log  t+O(\log(\log  t))$. Bramson gave the definitive answer, first for  $u_0=\mathds{1}_{(-\infty,0]}$ in~\cite{Bramson-probability-1978}, and later for initial data compactly supported from the right in~\cite{Bramson-probability-1983},  using probabilistic methods: there is a constant $\beta$, depending on $u_0$, such that
\begin{equation}\label{BramsonResult}	
    u\left(x+c_0 t-\tfrac{3}{c_0}\log  t +\beta+o(1),t\right)\to\Phi_{c_0}(x)\quad\textrm{uniformly in }x>0\textrm{ as }t\to\infty.
\end{equation}
Later,  using purely PDE arguments, Hamel, Nolen, Roquejoffre, and Ryzhik observed in~\cite{NolenRoquejofre} this  logarithmic correction in the convergence whenever the  initial datum is compactly supported from the right. As a continuation of this work, the three last authors extended in ~\cite{NolenRoquejofre2} these results, by reproving Bramson's result~\eqref{BramsonResult}. They achieved this through a comparison principle and compactness arguments, for continuous initial data trapped between two step functions. Their approach can also be applied in the nonlocal setting; see~\cite{Roquejoffre-2024}.

Consider now   the multidimensional problem~\eqref{KPPproblemEuclidean} with $d>1$. Aronson and Weinberger proved in their famous paper~\cite{AronsonWeinberger} that  the solution $u$ has also spreading speed $c_0$ whenever the initial datum is compactly supported, that is,
\begin{equation*}\label{SpreadingSpeedEuclidean2}	
	\lim\limits_{t\to\infty}\sup\limits_{\{|x|>ct\}}u(x,t)=0\quad\textrm{for all }c>c_0,\qquad
	\lim\limits_{t\to\infty}\inf\limits_{\{|x|<ct\}}u(x,t)=1\quad\textrm{for all }0<c<c_0.
\end{equation*}
Moreover, as proved in \cite{Gartner-1982}, the moving frame in which  the solution of the multidimensional equation~\eqref{KPPproblemEuclidean} converges to the profile $\Phi_{c_0}$ also  needs a logarithmic correction.

Several years later, for more general initial data,  Roquejoffre, Roussier-Michon and Rossi showed that the choice of the moving coordinate system and the constant of the logarithmic correction depend strongly  on the geometry of the support of the initial datum (see Figure~\ref{FirgureSupports}). More precisely:

\medskip

\noindent (i) \underline{``Ball" case.}   In~\cite{RoquejofferNdimensional} the authors proved that if the initial datum $u_0$ satisfies
\begin{equation}\label{eq:condition.ball.case}
    \mathds{1}_{B_{R_1}(0)}\leq u_0\leq\mathds{1}_{B_{R_2}(0)}\quad\textrm{for some }0<R_1<R_2,
\end{equation}
then there is a  bounded Lipschitz function $\tilde\beta:\mathbb{S}^{d-1}\to\mathbb{R}$ such that
\begin{equation}\label{u-to-TW-ellipticEuclidean}
    \lim\limits_{t\to\infty}\sup\limits_{x\in\mathbb{R}^d\setminus\{0\}}\big|u(x,t)-\Phi_{c_0}\big(|x|-c_0 t+\tfrac{d+2}{c_0}\log t +\tilde\beta(x/|x|)\big)\big|=0.
\end{equation}

\noindent(ii) \underline{``Half-plane" case} (see~\cite{tranlacionesRn}.) If the initial datum $u_0$ is trapped between two translates (in the same direction) of the step function $\mathds{1}_{(-\infty,0)}$,  say,
\begin{equation*}\label{eq:conditions.transl.Roque}
    \mathds{1}_{(-\infty,a)}(x_d)\leq u_0(x)\leq \mathds{1}_{(-\infty,b)}(x_d)\quad\textrm{for }x=(x_1,...,x_d)\in\mathbb{R}^d,
\end{equation*}
with $a<b$, and
\begin{equation}\label{eq:condition.transverse}
    \lim\limits_{|(x_1,\dots,x_{d-1})|\to\infty}  u_0(x)=\hat u_0(x_d)\quad\textup{uniformly in }x_d\in\mathbb{R},
\end{equation}
for some one-dimensional function $\hat u_0$, then, there exists a constant $\beta$ such that   
\begin{equation}\label{u-to-TW-parabolicEuclidean}
    \lim\limits_{t\to\infty}\sup\limits_{x\in\mathbb{R}^d}\big|u(x,t)-\Phi_{c_0}(x_d-c_0 t+\tfrac{3}{c_0}\log  t +\beta)\big|=0.
\end{equation}
\vskip-.2cm
\begin{figure}[H]
\begin{center}
    \subfigure[$u_0$ is rotation invariant and compactly supported.]{
        \begin{tikzpicture}[scale=0.7]
        
            %esfera
            \filldraw[thick, fill=darkblue, fill opacity=0.6] (0,0) circle (1cm);
            \draw[dashed] (0,0) circle (2cm);
            \draw[dashed] (0,0) circle (3cm);
            %esfera pequeña
            %Flechas
            %% I cuadrante
            \draw[thick,->] (1.2*0.7071,1.2*0.7071) -- (1.8*0.7071,1.8*0.7071);
            \draw[thick,->] (2.2*0.7071,2.2*0.7071) -- (2.8*0.7071,2.8*0.7071);
            %% II  cuadrante
            \draw[thick,->] (-1.2*0.7071,1.2*0.7071) -- (-1.8*0.7071,1.8*0.7071);
            \draw[thick,->] (-2.2*0.7071,2.2*0.7071) -- (-2.8*0.7071,2.8*0.7071);
            %III cuadrante
            \draw[thick,->] (-1.2*0.7071,-1.2*0.7071) -- (-1.8*0.7071,-1.8*0.7071);
            \draw[thick,->] (-2.2*0.7071,-2.2*0.7071) -- (-2.8*0.7071,-2.8*0.7071);
            % IV cuadrante
            \draw[thick,->] (1.2*0.7071,-1.2*0.7071) -- (1.8*0.7071,-1.8*0.7071);
            \draw[thick,->] (2.2*0.7071,-2.2*0.7071) -- (2.8*0.7071,-2.8*0.7071);
            
            % Nombres
            \node[right] at (2.7,2.7) {{\scriptsize$\mathbb{R}^{2}$}};
                %Ejes
            \draw[thick] (-3,0) -- (3,0);
            \node[left] at (0, 2.8) {{\scriptsize$x_{2}$}};
            \draw[thick] (0,-3) -- (0,3);
            \node[left] at (3,-0.2) {{\scriptsize$x_{1}$}};
        \end{tikzpicture}
    }\;\;\;\;\;\;\;\;\;\;\;\;\;\;\;\;\;\;
    \subfigure[ $u_0$ is translation invariant and supported in a half-space.]{	\begin{tikzpicture}[scale=0.7]
        
            % Nombres
            \node[right] at (2.7,2.7) {{\scriptsize$\mathbb{R}^{2}$}};
            %Support
            \draw[thick] (-3,0.5) -- (3,0.5);
            \fill[darkgreen, fill opacity=0.6] (-3,0.5) -- (3,0.5) -- (3,-3) -- (-3,-3) -- cycle;
            %Moving of the support
            \draw[thick,->] (-1.5,0.7) -- (-1.5,1.3);
            \draw[thick,->] (1.5,0.7) -- (1.5,1.3);
            \draw[dashed] (-3,1.5) -- (3,1.5);
            \draw[thick,->] (-1.5,1.7) -- (-1.5,2.3);
            \draw[thick,->] (1.5,1.7) -- (1.5,2.3);
            \draw[dashed] (-3,2.5) -- (3,2.5);
                %Ejes
            \draw[thick] (-3,0) -- (3,0);
            \node[left] at (0, 2.8) {{\scriptsize$x_{2}$}};
            \draw[thick] (0,-3) -- (0,3);
            \node[left] at (3,-0.2) {{\scriptsize$x_{1}$}};
        \end{tikzpicture}
    }
    \caption{ \label{FirgureSupports} Propagation of the support of $u$.}
\end{center}
\end{figure}

\vskip-.5cm

\noindent\emph{Remarks.} (i) The condition from below in~\eqref{eq:condition.ball.case} can be relaxed to $u_0\ge 0$, $u_0\not\equiv0$, arguing as in the proof of our main theorem.

\noindent (ii) For the sake of clarity in the presentation, the results in~\cite{tranlacionesRn} for the \lq\lq half-plane'' case were only proved for $d=2$. However, as pointed out in that paper, higher dimensions can be treated in the same way.

\noindent (iii) Convergence in the \lq\lq half-plane'' case is not guaranteed without some condition on the behavior of~$u_0$ as $x\to\infty$ in the transverse direction to $x_d$, as in~\eqref{eq:condition.transverse}, since the behavior at infinity of the initial datum can generate a complex behavior for later times; see the counterexamples in~\cite{tranlacionesRn}.

\noindent (iv) There is a logarithmic correction term, $-\frac{3}{c_0}\log t$, in both cases. The extra correction $-\frac{d-1}{c_0}\log t$ in the \lq\lq ball'' case is a \emph{curvature term} coming from the fact that levels sets are balls in a first approximation.   

\noindent (v) The existence of logarithmic correction terms in reaction-diffusion problems in situations in which curvature does not play a role ---for example, for problems posed in $\mathbb{R}$--- is related to the fact that it is the reaction close to the 0 level that is ruling the propagation. Whether this is or not the case depends both on the diffusion operator and on the reaction nonlinearity. For instance, such a correction does not exist for problem~\eqref{KPPproblemEuclidean} for several nonlinearities that are not of KPP type~\cite{Fife-McLeod-1977,Kanel-1962,Uchiyama-1985,Stokes-1976}, nor for equations with a KPP-type $f$ but with a nonlinear diffusion operator~\cite{Biro-2002,Du-Garriz-Quiros-2025,Du-Quiros-Zhou-2020,Garriz-2020}. In the case of~\eqref{KPPproblem}, when there is propagation it has to be ruled by the reaction, and we will show that we have this ground logarithmic correction term unrelated to curvature.

\medskip

\noindent{\bf The KPP problem in $\Hd$.} The preceding discussion of the KPP problem in the Euclidean space naturally raises several questions about the corresponding problem in the hyperbolic space. Is propagation always guaranteed? When propagation occurs, what is the corresponding spreading speed? Is a logarithmic correction required to locate level sets accurately? Does the solution converge to a traveling‑wave profile in a suitable moving frame? We aim to address these questions, focusing on how the geometry of the hyperbolic space influences each of these phenomena.

The first difference with the Euclidean setting is encoded in $\lambda_{1}$,  the infimum of the $L^2$-spectrum of the operator $-\lh$, which  is given by
\begin{equation}\label{lambda1}
    \lambda_1:=\frac{(d-1)^2}{4}.
\end{equation}
It is well known (see, for instance, Section~\ref{section:analytic-preliminaries} and the paper ~\cite{Vazquez} for the necessary background) that this spectral gap due to the underlying geometry creates a stronger diffusion (sometimes known as \emph{ballistic} behavior). Non-linear variations can be found in \cite{MatteoHiperbolic,JLVPorusMediumHyperbolic}, for the porous medium equation.

This stronger diffusion affects the large-time behavior of solutions to the KPP problem in the hyperbolic space~\eqref{KPPproblem}, first studied in the paper~\cite{Matano} by Matano, Punzo and Tesei.  These authors proved that there is a competition between reaction and diffusion. If $f'(0)>\lambda_{1}$, propagation takes place, and if $f'(0)<\lambda_{1}$ and the initial datum has compact support, there is vanishing. Specifically, they established the following result.

\begin{theorem}[Theorem 3.2 in \cite{Matano}]\label{ThPropagation}
    Let $u$ be a solution to~\eqref{KPPproblem} with $0\le u_{0}\le 1$, $u_0\not \equiv 0$.
    \begin{itemize}
        \item[\textup{(i)}] If $f'(0)<\lambda_{1}$ and $u_0$ has compact support, then  $\displaystyle\lim_{t\to\infty}\sup_{x\in\Hd} u(x,t)=0$.
        \item[\textup{(ii)}] If $f'(0)>\lambda_{1}$, then, for any compact subset $K$ of $\Hd$, $\lim\limits_{t\to\infty}\inf\limits_{x\in K}u(x,t)=1$.
    \end{itemize}
\end{theorem}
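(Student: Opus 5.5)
Both parts rest on comparison with linear problems, using spectral facts about $-\lh$ on $\Hd$: the heat kernel decays like $e^{-\lambda_1 t}$ (times polynomial factors), and $\lambda_1$ is the limit of the first Dirichlet eigenvalues $\lambda_1(B_R)$ of geodesic balls as $R\to\infty$.

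\emph{Part (i).} By~\eqref{KPPfunction.2}, $f(u)\le f'(0)u$, so $u$ is a subsolution of $v_t=\lh v+f'(0)v$. Comparison then gives $u(\cdot,t)\le e^{f'(0)t}e^{t\lh}u_0$. Since $u_0$ has compact support and $0\le u_0\le1$, we can take $u_0\le \mathds 1_{B_R}$. We then need the sup bound
\[
\|e^{t\lh}\mathds 1_{B_R}\|_\infty\le C_R\,(1+t)^{\gamma}e^{-\lambda_1 t}.
\]
The cleanest route is the known Davies--Mandouvalos sharp heat kernel estimate $p_t(r)\lesssim t^{-d/2}(1+r+t)^{(d-3)/2}(1+r)e^{-\lambda_1 t-(d-1)r/2-r^2/4t}$; integrating over a compact ball gives at worst polynomial growth times $e^{-\lambda_1 t}$. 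Alternatively, without pointwise kernel bounds, we can write $e^{t\lh}\mathds 1_{B_R}$ for $t\ge1$ as $e^{\lh/2}e^{(t-1)\lh}e^{\lh/2}\mathds 1_{B_R}$. The $L^2\to L^2$ norm of $e^{(t-1)\lh}$ is $e^{-\lambda_1(t-1)}$, and the $L^2\to L^\infty$ norm of $e^{\lh/2}$ is finite by local smoothing and homogeneity. Either way we obtain $u\le C e^{-(\lambda_1-f'(0))t}(1+t)^\gamma\to0$. For the $L^2$ route, $\gamma=0$ already suffices. This part is routine.

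\emph{Part (ii).} We construct a compactly supported stationary subsolution. Since $\lambda_1(B_R)\downarrow\lambda_1<f'(0)$, we fix $R$ with $\lambda_1(B_R)<f'(0)$ and let $\phi>0$ be the normalized first Dirichlet eigenfunction on $B_R$, extended by zero. By $f\in C^2$ there is $\varepsilon_0$ such that $f(s)\ge \lambda_1(B_R)s$ for $0\le s\le\varepsilon_0$. Hence $\underline u=\varepsilon\phi$ satisfies $\lh\underline u+f(\underline u)\ge0$ for small $\varepsilon$, and it is a generalized subsolution because its extension by zero is a maximum of subsolutions.

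Next, $u(\cdot,1)>0$ everywhere by the strong maximum principle, since $u_0\not\equiv0$. Because the isometry group acts transitively, we may center $B_R$ at any point $x_0$ and choose $\varepsilon$ so that $\varepsilon\phi\le u(\cdot,1)$. The solution $w$ starting from $\varepsilon\phi$ is then nondecreasing in $t$ and bounded by $1$. It therefore converges, by parabolic estimates, to a stationary solution $W$ with $\varepsilon\phi\le W\le1$, and $W\le u(\cdot,t+1)$ in the limit.

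The remaining step, which I expect to be the main obstacle, is to show $W\equiv1$. Since $W>0$ satisfies $\lh W+f(W)=0$, we compare with the family $\varepsilon\phi_{x}$ translated by isometries. Every $\varepsilon\phi_x$ lies below $W$: slide $x$ continuously from $x_0$ and apply the strong maximum principle at a first touching point, which is strict since $\varepsilon\phi_x$ is a strict subsolution in the interior, and $W>0=\varepsilon\phi_x$ on $\partial B_R$. This gives $\inf W\ge\varepsilon\phi(\text{center})>0$. Let $m=\inf W$ and take $x_n$ with $W(x_n)\to m$. Translating by isometries $T_n$ mapping $x_n$ to a fixed point and passing to a limit by elliptic estimates yields a stationary solution attaining its minimum $m>0$. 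At that minimum $\lh W\ge0$, so $f(m)\le0$, and therefore $m=1$. Monotone convergence of $w$ is locally uniform on compacts, so $\liminf\inf_K u\ge1$, which gives (ii).
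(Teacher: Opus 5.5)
Your proof is correct. For part (i) it is essentially what the paper does. The paper quotes Theorem~\ref{ThPropagation} from Matano--Punzo--Tesei without proof, but its Theorem~\ref{PropExtincionL1} with $p=1$ is exactly your first route: the linearized bound~\eqref{LineralizationEq} combined with $\sup_r P_t(r)\le P_t(0)\le Ct^{-3/2}e^{-\lambda_1 t}$, which comes from~\eqref{HeatKernelControl}--\eqref{HeatKernelApprox-h_t}. Integrating the kernel over a ball in fact gives a decaying factor $t^{-3/2}$, not ``at worst polynomial growth''. The paper relies on precisely this factor to cover the critical case $f'(0)=\lambda_1$ for $L^1$ data. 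Your $L^2$ spectral route ($\gamma=0$) needs only the spectral gap and the ultracontractivity of $e^{\lh/2}$, so it transfers more easily to other settings, but it cannot reach the critical case.

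Part (ii) is not proved in the paper at all. It is imported and then used, for instance in Proposition~\ref{prop:convergence.to.1.compact} and in the subdiffusive region. Your argument is therefore a self-contained replacement rather than a variant of the paper's proof. It runs as follows: a Dirichlet eigenfunction on a geodesic ball with $\lambda_1(B_R)<f'(0)$ serves as a compactly supported stationary subsolution; the solution from it converges monotonically to a stationary $W$; sliding the subsolution along isometries gives $\inf W>0$; and a translation--compactness argument at the infimum forces $f(\inf W)\le 0$, hence $W\equiv 1$. This uses only~\eqref{KPPfunction.1} and the homogeneity of $\Hd$.

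Two small points to tidy:
\begin{enumerate}
\item The sliding step invokes a strict subsolution. For that, choose $\varepsilon_0$ with $f(s)>\lambda_1(B_R)s$ on $(0,\varepsilon_0]$, which is possible since $f(s)/s\to f'(0)$. Alternatively, apply the strong maximum principle directly to $W-\varepsilon\phi_x\ge 0$ in $B_R(x)$.
\item The ``maximum of subsolutions'' justification would require extending $f$ below $0$, and you do not need it. The inequality $w\ge\varepsilon\phi$ follows from comparison on $B_R\times(0,T)$, where $w\ge 0=\varepsilon\phi$ on $\partial B_R$ and $\varepsilon\phi$ is a classical subsolution in $B_R$.
\end{enumerate}
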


This is the first significant difference with the KPP problem in the Euclidean space, where propagation always occurs. On the contrary, in the hyperbolic space, the extra diffusion due to the geometry may prevent propagation. 

When the extra diffusion is not able to prevent propagation, it will nevertheless slow the spreading speed from $c_0$ down to a new critical speed $c_*$ given by  
\begin{equation}\label{c0andc*}	
    c_*:=c_0-(d-1),
\end{equation} 
as proved also in~\cite{Matano}.  Notice that 
\begin{equation}\label{eq:equivalent.conditions}
    \operatorname{sign}(f'(0)-\lambda_1)=\operatorname{sign}(c_0-(d-1))
\end{equation}
so that $c_*>0$ if and only if $f'(0)>\lambda_1$.

\begin{theorem}[Theorem 3.6 in \cite{Matano}]\label{ThSpreadingSpeedElliptic}
    Let $f'(0)>\lambda_{1}$ and $u_0\not\equiv 0$ compactly supported. Then, the solution $u$ to~\eqref{KPPproblem} satisfies
    \begin{itemize}
        \item[\textup{(i)}] $\displaystyle\lim\limits_{t\to\infty}\sup_{\{x\in\Hd:\,\rho_\mathfrak{e}(x)>ct\}}u(x,t)=0$ for all $c>c_*$,
        \item[\textup{(ii)}] $\displaystyle\lim\limits_{t\to\infty}\inf_{\{x\in\Hd:\,0\leq\rho_\mathfrak{e}(x)<ct\}}u(x,t)=1$ for all $0<c<c_*$,
    \end{itemize}
    where $\rho_\mathfrak{e}: \Hd\to[0,\infty)$ is the geodesic distance to the origin (fixed at any point $p\in\mathbb{H}^d$).
\end{theorem}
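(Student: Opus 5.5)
By comparison, it suffices to treat radial data. The solution with $u_0$ replaced by $\mathds{1}_{B_R(p)}$, with $R$ large enough that $u_0\le \mathds{1}_{B_R(p)}$, lies above $u$, which gives (i). For (ii) we use as a lower barrier a small radial subsolution placed inside the region where $u$ is already positive. So from now on I work with radial functions $u(r,t)$, $r=\rho_\mathfrak{e}$. In these coordinates
\[
\lh u=u_{rr}+(d-1)\coth r\,u_r .
\]
The key observation is that $\coth r\to1$ as $r\to\infty$. Far out, the equation is therefore a one-dimensional KPP equation with constant drift $(d-1)$ toward increasing $r$. The drift is exactly what shifts the speed. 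Indeed, the substitution $u=e^{-(d-1)r/2}v$ removes the first-order term and turns $f'(0)$ into $f'(0)-\lambda_1$, and the minimal speed becomes $2\sqrt{f'(0)-\lambda_1}+\dots$. A more direct way to see it: the linearized equation $w_t=w_{rr}+(d-1)w_r+f'(0)w$ has exponential solutions $e^{-\lambda_0(r-c_*t)}$ with
\[
\lambda_0^2-(d-1)\lambda_0+f'(0)=\lambda_0 c_*,
\]
so $c_*=2\lambda_0-(d-1)$.

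For (i), I build a supersolution of the linearized problem. Since $f(u)\le f'(0)u$ by \eqref{KPPfunction.2}, any $\bar w$ with $\bar w_t\ge\lh\bar w+f'(0)\bar w$ is a supersolution. I take
\[
\bar w(r,t)=A\,e^{-\lambda_0(r-c_*t)}\quad\text{for } r\ge1,
\]
and plug it in. The terms $\lambda_0^2-(d-1)\coth r\,\lambda_0+f'(0)$ must be at most $\lambda_0c_*$. This holds because $\coth r\ge1$ and the gap is nonnegative. At the inner region $r\le1$ I simply use $\min(1,\bar w)$, choosing $A$ large so that the minimum is $1$ there. Since $1$ is a solution and a minimum of supersolutions is a supersolution, this is fine. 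Also $\bar w\ge u_0$ at $t=0$ once $A\ge e^{\lambda_0R}$. Then on $\{r>ct\}$ with $c>c_*$ we get $u\le Ae^{-\lambda_0(c-c_*)t}\to0$.

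For (ii), I construct compactly supported subsolutions moving at any speed $c<c_*$. Fix $c<c_*$. The aim is a subsolution of the form $\underline u(r,t)=\varepsilon\,\phi(r-ct)$ where $\phi$ solves
\[
\phi''+(d-1)\phi'+c\phi'+(f'(0)-\eta)\phi=0
\]
on an interval $(0,L)$ with Dirichlet conditions. Since $c<c_*$, the characteristic roots are complex, so $\phi=e^{-\frac{c+d-1}{2}s}\sin(\pi s/L)$ with $L$ chosen accordingly. The error from $\coth r\neq1$ is $O(e^{-2r})$, and $\phi'$ has an unfavourable sign only near the far end. I would handle this by starting the subsolution at $r_0$ large, where $\coth r-1$ is smaller than $\eta$ times a margin. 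With $\varepsilon$ small, $f(\varepsilon\phi)\ge(f'(0)-\eta)\varepsilon\phi$, and extending by $0$ gives a generalized subsolution. Part (ii) of Theorem~\ref{ThPropagation} gives $u(\cdot,T)>0$ everywhere, so it lies above $\underline u(\cdot,T)$ for small $\varepsilon$. Hence $u(r,t)\ge\varepsilon\delta$ along $r\approx ct$, and by rotating in all directions (radial comparison) also on the whole annulus swept out.

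The main obstacle is upgrading this positive lower bound on $\{r<ct\}$ to convergence to $1$ uniformly. I would argue as follows. The moving subsolutions, launched at all times $s\le t$ and with speeds $c'\in(c,c_*)$, give $\inf_{\{r<ct\}}u(\cdot,t)\ge\delta>0$ for large $t$. Next, the solution of the ODE $y'=f(y)$ with $y(0)=\delta$ reaches $1-\epsilon$ in finite time $\tau$. Comparing $u$ on balls of radius $R_\tau$ around each point with a solution of the problem on large geodesic balls, and using the convergence in compact sets from Theorem~\ref{ThPropagation}(ii) applied uniformly (by homogeneity of $\Hd$), yields $u\ge1-\epsilon$ on $\{r<c t\}$ for a slightly smaller $c$. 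The uniformity in the base point, handled via isometry invariance, is the step needing the most care.
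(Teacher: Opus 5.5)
\textbf{The gap: the moving subsolution in (ii).} Part (ii) rests on the claim that $\underline u=\varepsilon\phi(r-ct)$ becomes a subsolution once it is started at some large $r_0$. That claim fails as you justify it. Using the ODE for $\phi$,
\[
\underline u_{rr}+(d-1)\coth r\,\underline u_r+f(\underline u)-\underline u_t=b(r)\,\varepsilon\phi'+\big[f(\varepsilon\phi)-(f'(0)-\eta)\varepsilon\phi\big],\qquad b(r):=(d-1)(\coth r-1),
\]
and by \eqref{KPPfunction.2} the bracket is at most $\eta\varepsilon\phi$. At the leading zero, $\phi(s)\approx|\phi'(L)|(L-s)\to0$ while $\phi'(s)\to\phi'(L)<0$. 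So the right-hand side is negative in a layer of width about $b(r)/\eta$ next to $s=L$, however large $r_0$ is. A drift error proportional to $\phi'$ can never be dominated by a reaction margin proportional to $\phi$ at a transversal zero.

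The paper's device in Proposition~\ref{ThmLowerSpredingSpeedBestia} does not help here. There the speed margin $k/t$ multiplies $\underline u_\rho$, just like the curvature error, but that only works for a monotone profile, not a compactly supported one.

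One repair is to turn the drift error into a potential. Take $E(r):=(1-e^{-2r})^{-(d-1)/2}$, so that $E'/E=-b/2$. Then every radial $w$ satisfies
\[
\lh(Ew)=E\big(w_{rr}+(d-1)w_r+q\,w\big),\qquad q:=-\tfrac12 b'-\tfrac14 b^2-\tfrac{d-1}{2}\,b=O(e^{-2r}).
\]
For $\underline u:=\varepsilon E(r)\phi(r-ct-r_0)$ the only error left is $q\,\varepsilon\phi$. The reaction margin absorbs it once $|q|\le\eta/2$ on $\{r\ge r_0\}$, and $E\to1$ takes care of the nonlinearity for $\varepsilon$ small. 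Alternatively, glue two profiles at the maximum of $\phi$: on the rising part use the damped sine with drift $c+d-1$, and on the falling part the one with drift $c+d-1+b_0$, where $b_0\ge\sup_{r\ge r_0}b$ is small. Each piece then has an error term of favourable sign. With either fix, your launching argument giving $\inf_{\{r<ct\}}u(\cdot,t)\ge\delta$ goes through.

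\textbf{The rest.} The paper does not prove this theorem; it quotes Matano, Punzo and Tesei, so there is no proof to match.

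Your part (i) is correct. The residual of $\bar w=Ae^{-\lambda_0(r-c_*t)}$ for the linearized operator is exactly $(d-1)\lambda_0(\coth r-1)\bar w\ge0$. Taking $\min(1,\bar w)$ with $Ae^{-2\lambda_0}>1$ disposes of the conical point at $p$. This is a neat alternative to the heat-kernel bounds \eqref{HeatKernelControl}--\eqref{HeatKernelApprox} used in Proposition~\ref{NecesityOfLogathElliptic}, which even give a $\log t$ refinement.

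The upgrade from $\delta$ to $1-\epsilon$ needs no ODE or Dirichlet problems on balls. If $u(\cdot,t-S)\ge\delta$ on $B_1(x)$, compare on all of $\Hd$ with the isometric image centred at $x$ of the solution $U$ issued from $\delta\mathds{1}_{B_1(p)}$. Choose $S$ via Theorem~\ref{ThPropagation}(ii) so that $U(p,S)\ge1-\epsilon$.

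Finally, your heuristic that the minimal speed becomes $2\sqrt{f'(0)-\lambda_1}$ is wrong. The substitution $u=e^{-(d-1)r/2}v$ is not a change of frame, and in fact $c_*<2\sqrt{f'(0)-\lambda_1}$. Your direct computation of $c_*$ is the right one.
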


The extra term $d-1$ in the expression of $c_*$, as compared to the Euclidean one, $c_0$, can be explained from the structure of the Laplacian in $\mathbb H^d$ (see, for instance, formula \eqref{Laplacian-introduction} in radial coordinates). The coefficient of the drift term in radial coordinates, differently from what happens in the Euclidean space, does not vanish at infinity, but is asymptotically constant. As a consequence, this drift from infinity must be overcome in order to have propagation.

\medskip

\noindent\textbf{Objectives. } Our first goal is to address the question of propagation \emph{versus} extinction, extending Theorem~\ref{ThPropagation} to include more general initial data and the critical case $f'(0)=\lambda_{1}$. This is done in Section~\ref{section:vs}. For further discussion on propagation versus extinction in the context of the KPP problem on other complete, non-compact Riemannian manifolds, we refer to~\cite{KPPinOtherManifold}.

Our second goal is to study, in the case of propagation, the convergence of solutions to~\eqref{KPPproblem} to a traveling wave in a moving coordinate system determined by the symmetries of the initial datum.  

As the Euclidean results~\eqref{u-to-TW-ellipticEuclidean} and~\eqref{u-to-TW-parabolicEuclidean} highlight, there is a relation between the isometry under which the initial datum is invariant and the coordinate system in which the solution converges to a traveling wave. Indeed, for~\eqref{u-to-TW-parabolicEuclidean}, in~\cite{tranlacionesRn} the authors require the initial datum to be trapped between two functions invariant under translations in one direction, that is, invariant under the same Euclidean parabolic isometry (translations). On the other hand,  to obtain~\eqref{u-to-TW-ellipticEuclidean}, one of the hypotheses in~\cite{RoquejofferNdimensional} is that the initial datum must be trapped between two radial functions, i.e., between two functions that are invariant under the same elliptic Euclidean isometry (rotations). The underlying idea is that  these are two settings where the problem can be essentially characterized in terms of solutions to a unidimensional PDE.

In the hyperbolic space there are also subgroups of  isometries that generate orbits of co-dimension one. These are the cohomogeneity one subgroups of isometries of $\mathbb H^d$, and can be characterized from the Iwasawa decomposition of the group of isometries of $\mathbb H^d$ (see \cite{Helgason2001} for a classical reference, and \cite[Section 2.2]{Gonzalez-Saez} for a friendly introduction).  Here we find an important  difference with the Euclidean space, since there are three types of such isometries, rather than two: elliptic ($\mathfrak e$), parabolic ($\mathfrak p$) and hyperbolic ($\mathfrak h$). In Section~\ref{section:geometry}, we provide some geometric background on $\Hd$  and its isometries in detail, for convenience of the non-expert reader. For a representation of the invariant sets under each isometry group in the Poincar\'e disk~$\mathbb{D}^2$, a model of~$\mathbb{H}^2$, see Figure~\ref{FirgureOrbits}.

\begin{figure}[H]
	\begin{center}
		\subfigure[Elliptic orbits (circles centered at origin).]{
			\begin{tikzpicture}[scale=0.6]
				% Poincaré disc 2 dim
				\draw[thick] (0,0) circle (3cm);
				\node[right] at (2.7,2.7) {{\scriptsize$\mathbb{H}^{2}$}};
				\filldraw[black] (0,0) circle (2pt); %origen
				%Órbitas elípticas
				\draw[thick, darkblue] (0,0) circle (2.1cm);
				\draw[thick, darkblue] (0,0) circle (0.7cm);
				\draw[thick, darkblue] (0,0) circle (1.4cm);
			\end{tikzpicture}
		}
        \hskip1cm
		\subfigure[Hyperbolic orbits with respect to the hyperplane $P$.]{		\begin{tikzpicture}[scale=0.6]
				%Órbitas hiperbólicas
				\draw[thick](-3,0)--(3,0);
				\begin{scope}
					\clip (-3.1,0) rectangle (3.1,2.1); % Recorta la mitad inferior
					\draw[thick, orange] (0,0) ellipse (3cm and 0.7cm);
					\draw[thick, orange](0,0) ellipse (3cm and 1.4cm);
					%\draw[thick, orange](0,0) ellipse (3cm and 2.1cm);
				\end{scope}
				\begin{scope}
					\clip (-3.1,0) rectangle (3.1,-2.1); % Recorta la mitad superior
					\draw[thick, orange] (0,0) ellipse (3cm and 0.7cm);
					\draw[thick, orange](0,0) ellipse (3cm and 1.4cm);
					% \draw[thick, orange](0,0) ellipse (3cm and 2.1cm);
				\end{scope}
				% Poincaré disk 2 dim
				\draw[thick] (0,0) circle (3cm);
				\node[right] at (2.7,2.7) {{\scriptsize$\mathbb{H}^{2}$}};
				%Hyperplane P
				\draw[thick](-3,0)--(-3,0);
				\node[left] at (-3.1,0) {{\scriptsize$P$}};
			\end{tikzpicture}
		}
        \hskip1cm
		\subfigure[Parabolic orbits (horospheres centered at $q$).]{		\begin{tikzpicture}[scale=0.6]
				%Órbitas parabólicas
				\draw[thick, darkgreen] (-3+2.1,0) circle (2.1cm);
				\draw[thick, darkgreen] (-3+0.7,0) circle (0.7cm);
				\draw[thick, darkgreen] (-3+1.4,0) circle (1.4cm);
				% Poincaré disk 2 dim
				\draw[thick] (0,0) circle (3cm);
				\node[right] at (2.7,2.7) {{\scriptsize$\mathbb{H}^{2}$}};
				%q
				\filldraw[black] (-3,0) circle (2pt);
				\node[left] at (-3,0) {{\scriptsize$q$}};
			\end{tikzpicture}
		}
		\caption{\label{FirgureOrbits}Orbits in the Poincaré disk $\mathbb{D}^2$ for each isometry group.}
	\end{center}
\end{figure}

This rich structure has been explored in several PDEs, for instance \cite{BirindelliMazeoIsometriesExplicadas,BirindeMazzeoSaezCoordenadasHiperbolicas} for Allen-Cahn, \cite{Chossant-Faye-DibujitosIsometrias} for Swift-Hohenberg, and the aforementioned \cite{Matano} for (KPP).

Analytically, each of the three isometry subgroups generates a  coordinate system of the form
\begin{equation}\label{CoordenatesIntro}
    x=(\rho_i,\theta_i)\in\Hd,\quad i\in\{\mathfrak{e},\mathfrak{h},\mathfrak{p}\},
\end{equation}
where $\rho_i$ is unidimensional. This yields a splitting in the expression of the Laplace-Beltrami operator on $\mathbb H^d$; see \eqref{LaplaceBeltramiRhoTheta}. We will consider initial data that are symmetric, that is, that only depend on the variable $\rho_i$. 

An enlightening calculation is the  construction of (explicit) traveling waves in $\mathbb H^d$ in parabolic coordinates $x=(\rho_{\mathfrak p},\theta_{\mathfrak p})$; see~\cite{Matano}. These are known as \emph{horospheric traveling waves}. For this, note that equation \eqref{KPPproblem} reduces for solutions $u$ that only depend on the variable $\rho:=\rho_{\mathfrak p}$ to the unidimensional problem
\begin{equation}\label{problem-parabolic-intro}
     u_t= u_{\rho\rho}+(d-1) u_{\rho}+f(u),\quad \rho\in\mathbb R,\;\;t>0.
\end{equation}
If we look for solutions to~\eqref{problem-parabolic-intro} of the form $u(\rho,t)=U_c(\rho-ct)$, then the profile $U_c$ satisfies
\begin{equation*}
    U''+(c+d-1)U'+f(U)=0\quad\textrm{in }\mathbb{R}
\end{equation*}
which is precisely the ODE equation \eqref{ODE-tw} with the speed $c$ translated by $d-1$. Thus, the critical velocity $c_*$ in~ \eqref{c0andc*} is nothing but the minimal velocity for horospherical traveling waves.

This correction in the speed of traveling waves due to the underlying geometry is expected also for general solutions to~\eqref{problem-parabolic-intro}. With this in mind, we consider the translate $v(\rho,t):= u(\rho-(d-1)t,t)$ of~$u$. This function is a solution to the unidimensional Euclidean KPP equation
\begin{equation}\label{KPPEuclideanDiscussion}
    v_t=v_{\rho\rho}+f(v)\quad\text{in }\mathbb{R}\times\mathbb{R}_+.
\end{equation}
But we already know that solutions to this problem with compactly supported initial data spread for large times with speed $c_0$; see~\cite{AronsonWeinberger-PrimerArticulo(Viejo),AronsonWeinberger}. Therefore, the asymptotic spreading speed for $u$ should be $c_*$ if there is spreading.  In fact, we know more about the solutions to the unidimensional PDE~\eqref{KPPEuclideanDiscussion} when the initial data are compactly supported, namely
\begin{equation}\label{intro:log-behavior}    
    \lim\limits_{t\to\infty}\sup\limits_{\{\rho\geq 0\}}\big|v(\rho,t)-\Phi_{c_0}\big(\rho-(c_0t-\tfrac{3}{c_0}\log t+\beta)\big)\big|=0
\end{equation}
for some constant $\beta\in\mathbb{R}$ depending on the initial datum (recall formula ~\eqref{BramsonResult}  
and the references cited there). This convergence result translates into
\begin{equation}\label{eq:result.original.coordinates}
    \lim\limits_{t\to\infty}\sup\limits_{\{\rho\geq 0\}}\big|u(\rho,t)-\Phi_{c_0}\big(\rho-(c_*t-\tfrac{3}{c_0}\log t+\beta)\big)\big|=0
\end{equation}
in the original coordinates. Thus, after a logarithmic correction, we have convergence towards the horospherical traveling wave of minimal velocity.

In elliptic and hyperbolic coordinates one does not have traveling-wave solutions. However,~\eqref{problem-parabolic-intro} is still a good approximation to account for the traveling-wave behavior of solutions of~\eqref{KPPproblem} with symmetric initial data. 
Let us explain this idea for the case of elliptic isometries, $i=\mathfrak e$. Thus, suppose that $\rho:=\rho_{\mathfrak e}(x)$ is the geodesic distance between $x$ and a fixed point~$p\in\Hd$ (taken to be the origin), and take an initial datum  $0\leq u_0\leq 1$  compactly supported and radially symmetric with respect to this point. That is, there exists a unidimensional $\tilde{u}_0: [0,\infty)\to[0,1]$ such that  $u_0(x)=\tilde{u}_0(\rho(x))$ for all $x\in\Hd$. The Laplace-Beltrami operator in polar coordinates is given by
\begin{equation}\label{Laplacian-introduction}
    \Delta_{\Hd}u=u_{\rho\rho}+(d-1)\coth\rho\, u_{\rho}+\frac{1}{\sinh^2\rho}\Delta_{\mathbb{S}^{d-1}}u,\quad \rho> 0.
\end{equation}  
Since the  comparison principle holds (see Theorem~\ref{ComparsionPrincipleHd}) and the initial datum is invariant under rotations, the solution $u$ to~\eqref{KPPproblem} is also radial and the problem becomes unidimensional. That is, $u(x,t)=\tilde{u}(\rho(x),t)$ for all $(x,t)$ in $\Hd\times \mathbb{R}_{+}$, and  $\tilde{u}:[0,\infty)\times \mathbb{R}_+\to [0,1]$   satisfies
\begin{equation*}
    \begin{cases}
        \tilde{u}_t=\tilde{u}_{\rho\rho}+(d-1)\coth\rho\,\tilde{u}_\rho+f(\tilde{u}),&\quad\rho\geq0,\;\;t>0,\\
        \tilde{u}_\rho(0,t)=0,&\quad t>0,\\
        \tilde{u}(\rho,0)=\tilde{u}_0(\rho),&\quad\rho\geq 0.
    \end{cases}
\end{equation*}
Since $\coth{\rho}\to1$ as $\rho\to\infty$, we expect the behavior of $u$ to be governed by the unidimensional equation~\eqref{problem-parabolic-intro}. This suggests considering the translate $v(\rho,t):=\tilde u(\rho-(d-1)t,t)$, which satisfies
\begin{equation}\label{eq:translate.elliptic}
    v_t=v_{\rho\rho}+(d-1)h(\rho) v_\rho+f(v)\quad\text{in }\mathbb{R}\times\mathbb{R}_,
\end{equation}
with $h(\rho)=\coth\rho-1$. Notice that $h(\rho)\to0$ as $\rho\to\infty$. This is the same situation encountered when dealing with radial solutions $v$ to the KPP problem in the Euclidean space, that also satisfy~\eqref{eq:translate.elliptic}, but this time with $h(\rho)=1/\rho$. This suggests using the approach developed by Roquejoffre, Rossi and  Roussier-Michon in~\cite{RoquejofferNdimensional} for the latter problem to deal with ours. However, $h(\rho)=O(e^{-2\rho})$ in the \lq\lq  elliptic case'' of $\Hd$, a much faster decay to 0 than in the $d$-dimensional Euclidean case. Due to this important difference, the convection term will not give an extra contribution to the logarithmic correction, in contrast with the $d$-dimensional Euclidean situation, and we will get the behavior~\eqref{eq:result.original.coordinates} in the original coordinates, where now~$\rho:=\rho_{\mathfrak e}(x)$.

In the case of hyperbolic isometries, $i=\mathfrak h$, a similar reasoning yields again~\eqref{eq:translate.elliptic}, now with~$\rho:=\rho_{\mathfrak h}(x)$ and $h(\rho)=\tanh\rho$. The function $h$ has also an exponential decay to 0 in this case, and we get no extra logarithmic correction. 

The above information is the content of our main theorem.

\begin{theorem}\label{ThConvergenceTravellingWave}
 	Let $f'(0)>\lambda_{1}$.  Let $u$ be a solution to~\eqref{KPPproblem} with an initial datum $u_0:\Hd\to[0,1]$ given by a unidimensional $\rho_i$-function, $i\in\{\mathfrak{e},\mathfrak{h},\mathfrak{p}\}$,
    \begin{equation}\label{eq:symmetry.u0}
        u_{0}(x)=\widetilde{u}_0(\rho_{i}(x))\quad\textup{for all }x\in\Hd,
    \end{equation}
    with $\widetilde u_0$ nontrivial and compactly supported from the right,  that is
    \begin{equation}\label{eq:supported-from-right}
        \widetilde{u}_0(\rho_i)=0 \quad\text{for all }\rho_i\ge \rho_*\textup{ for some }\rho^*\in I_i.
    \end{equation}
    Then, there is a constant $\beta\in\mathbb{R}$ depending on the initial data such that 
    \begin{equation}\label{u-to-tw-equation-main-theorem}          
        \lim\limits_{t\to\infty}\sup\limits_{\{x\in\Hd:\,\rho_i(x)\geq 0\}}\big|u(x,t)-\Phi_{c_0}\big(\rho_i(x)-(c_*t-\tfrac{3}{c_0}\log t+\beta)\big)\big|=0.
 	\end{equation}
\end{theorem}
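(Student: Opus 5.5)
By the comparison principle (Theorem~\ref{ComparsionPrincipleHd}) and the invariance of $\lh$ under the relevant isometry subgroup, the solution inherits the symmetry~\eqref{eq:symmetry.u0}. Thus $u(x,t)=\tilde u(\rho_i(x),t)$, where $\tilde u$ solves
\[
\tilde u_t=\tilde u_{\rho\rho}+(d-1)H_i(\rho)\tilde u_\rho+f(\tilde u),
\]
with $H_{\mathfrak p}\equiv1$, $H_{\mathfrak e}=\coth\rho$ (Neumann condition at $\rho=0$), and $H_{\mathfrak h}=\tanh\rho$ on $I_{\mathfrak h}=\mathbb R$.

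The parabolic case is then immediate. The translate $v(\rho,t)=\tilde u(\rho-(d-1)t,t)$ solves the one-dimensional Euclidean KPP equation~\eqref{KPPEuclideanDiscussion}, and its initial datum is compactly supported from the right. Bramson's result~\eqref{BramsonResult}, in the PDE form of~\cite{NolenRoquejofre2}, gives~\eqref{intro:log-behavior}. Since $c_0-(d-1)=c_*$, undoing the translation yields~\eqref{u-to-tw-equation-main-theorem}.

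For $i\in\{\mathfrak e,\mathfrak h\}$ I will take the same translate $v$. It solves~\eqref{eq:translate.elliptic} with $h=H_i-1$, which decays exponentially as $\rho\to\infty$ in the region that matters: $|h(\rho)|\le Ce^{-2\rho}$ for $\rho\ge1$. I then follow the scheme of~\cite{RoquejofferNdimensional} in three steps.

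\emph{Step 1: propagation and a priori bounds.} Since $f'(0)>\lambda_1$, Theorem~\ref{ThSpreadingSpeedElliptic} (or its analogue for the other symmetries) shows that $\tilde u\to1$ locally uniformly. This lets me replace $u_0$, after a finite time and using comparison, by data trapped between two step functions in $\rho$. I will also prove monotonicity in $\rho$ at large times, so that level sets are governed by a centering function $m(t)$.

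\emph{Step 2: the linearized problem.} I work in self-similar variables, following the Hamel--Nolen--Roquejoffre--Ryzhik method. Set $w=e^{\lambda_0(\rho-\bar m(t))}v$ with $\bar m(t)=c_0t-\frac3{c_0}\log t$. The linear part of the equation for $w$ is the Euclidean one, with Dirichlet-type behaviour at the moving boundary, plus a perturbation of size $(d-1)h(\rho)(w_\rho-\lambda_0 w)$. Because $h$ is exponentially small in the frame $\rho\sim c_0t$, this perturbation is $O(e^{-2c_0t})$ there and integrable in time. Near $\rho=O(1)$ it is also harmless: there $v\approx1$, so this region only enters the $w$-problem through an exponentially small weight.

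The content of this step is showing that the $-\frac{3}{c_0}\log t$ correction is not modified. This is the contrast with the Euclidean radial case, where $h=1/\rho$ is not integrable and produces the extra $\frac{d-1}{c_0}\log t$. I will build sub- and supersolutions of the form $e^{-\lambda_0(\rho-\bar m)}\,\frac{\rho-\bar m+K}{t^{3/2}}\,\psi$ times corrections, and check that the drift error can be absorbed thanks to the exponential decay of $h$.

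\emph{Step 3: convergence.} Comparison gives $|m(t)-\bar m(t)|\le C$. Then I follow the Liouville/compactness argument of~\cite{NolenRoquejofre2,RoquejofferNdimensional}: the function $w$ converges in the diffusive scaling to $C_\infty\,\xi e^{-\xi^2/4}$. This fixes the shift $\beta$, and matching gives convergence to $\Phi_{c_0}$ uniformly for $\rho\ge0$. For $\rho$ behind the front the convergence holds because both $\tilde u$ and $\Phi_{c_0}$ tend to $1$ there.

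The main obstacle is Step 2: making the perturbation estimates for the drift uniform in the region near $\rho=0$. In the elliptic case $h=\coth\rho-1$ blows up as $\rho\to0$, and in the hyperbolic case one must also handle $\rho\to-\infty$, where $h=\tanh\rho-1\to-2$. The first approach I will try is to cut off: apply the linear analysis only on $\{\rho\ge \bar m(t)/2\}$, where $|h|\le e^{-c_0t}$. The complementary region will be controlled by the propagation bound $v\ge1-\varepsilon$ from Step 1, and the two regions will be glued with a boundary condition imposed along $\rho=\bar m(t)/2$.
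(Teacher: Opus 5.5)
Your outline is essentially the paper's proof: reduce by symmetry to one dimension, work in the frame $c_*t-\frac{3}{c_0}\log t$, and use the exponential decay of $h_1^i-1$ to make the curvature drift $o(1/t)$, so the coefficient $\frac{3}{c_0}$ is unchanged. Then you run the Roquejoffre--Rossi--Roussier-Michon/Nolen--Roquejoffre--Ryzhik barriers, dipole convergence and matching near and ahead of the front, and behind it you use a traveling-wave subsolution that, in the elliptic case, lives on $[\frac{c_*}{2}t,\,c_*t+t^{1/2}]$ and is glued to the propagation bound on $[0,\frac{c_*}{2}t]$, exactly like your cut-off. Two small corrections. First, after translating by $(d-1)t$ the drift is $(d-1)h(\rho-(d-1)t)$, so it is $O(e^{-2c_*t})$ (not $O(e^{-2c_0t})$) at the front, and the cut-off must sit at $\rho_i\approx\frac{c_*}{2}t$ in the original variable, since $\rho=\bar m(t)/2$ in the translated frame corresponds to negative $\rho_i$ when $c_0<2(d-1)$. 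Second, \cite{NolenRoquejofre2} only covers data trapped between two step functions, so for $i=\mathfrak p$ you should run the same argument with $h\equiv0$ (as the paper does) rather than quote it.
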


\begin{remark}
    We have an analogous result for $\rho_i(x)\leq 0$ in the hyperbolic and parabolic cases if $\widetilde{u}_0$ is compactly supported from the left.
\end{remark}

Notice that, in contrast with the Euclidean case, the linear term of the moving frame in $\Hd$ varies with the dimension, and the constant in the logarithmic correction is independent both of the chosen isometry and of the dimension. This fact, together with the scheme of the proof of the main theorem, will be explained in Section~\ref{section:strategy}.

\medskip

\noindent{\bf Future perspectives.} Our results for the case of initial data that are invariant under isometries  yield, by comparison, information about more general solutions if the initial data can be trapped between two functions that are invariant under the same isometry to which our results apply. 

\begin{corollary}\label{prop:final}
    Let $f'(0)>\lambda_{1}$. Consider a measurable function $u_0:\Hd\to[0,1]$ satisfying
    \begin{equation*}
        0\le\underline{u}_0(\rho_{i}(x))\leq u_0(x)\leq {\overline{u}}_0(\rho_{i}(x)),\quad x\in\Hd,
    \end{equation*}
    for some compactly supported unidimensional $\rho_i$-functions ${\underline{u}}_0,\, \overline{u}_0:I_i\to\mathbb{R}$, $i\in\{\mathfrak{e},\mathfrak{h},\mathfrak{p}\}$. Then, there exist $\beta_1\leq\beta_2$ such that, for all $\varepsilon>0$ there is a time $t_{\varepsilon}>0$ for which
    \begin{equation*}\label{ConvergenceTwNoInvariant}
        -\varepsilon+\Phi_{c_0}\big(\rho_i(x)-[c_*t-\tfrac{3}{c_0}\log t+\beta_1]\big)\leq u(x,t)\leq \Phi_{c_0}\big(\rho_i(x)-[c_*t-\tfrac{3}{c_0}\log t+\beta_2]\big)+\varepsilon
    \end{equation*}
    for all $x\in\Hd$ with $\rho_i(x)\geq 0$ and $t\geq t_{\varepsilon}$.
\end{corollary}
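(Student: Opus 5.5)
The plan is to combine the comparison principle (Theorem~\ref{ComparsionPrincipleHd}) with Theorem~\ref{ThConvergenceTravellingWave}, applied to the two symmetric barriers. Let $\underline u$ and $\overline u$ be the solutions to~\eqref{KPPproblem} with initial data $\underline u_0(\rho_i(x))$ and $\overline u_0(\rho_i(x))$. Both data take values in $[0,1]$, since $0\le\underline u_0\le u_0\le\overline u_0$ and we may truncate $\overline u_0$ at $1$ without changing the ordering. By comparison,
\[
\underline u(x,t)\le u(x,t)\le\overline u(x,t)\quad\text{in }\Hd\times\mathbb{R}_+ .
\]
Each barrier is invariant under the same cohomogeneity one subgroup. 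Each is compactly supported in $I_i$, hence in particular compactly supported from the right, so it satisfies~\eqref{eq:symmetry.u0}--\eqref{eq:supported-from-right}.

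Next I treat the upper barrier. If $\overline u_0$ is nontrivial, Theorem~\ref{ThConvergenceTravellingWave} provides a constant $\beta_2$ such that $\overline u(x,t)\le\Phi_{c_0}(\rho_i(x)-[c_*t-\tfrac{3}{c_0}\log t+\beta_2])+\varepsilon$ for $\rho_i(x)\ge0$ and $t\ge t_\varepsilon$. The case $\overline u_0\equiv0$ is trivial.

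The lower barrier needs more care. If $\underline u_0$ is nontrivial, the theorem similarly gives $\beta_1'$ with $\underline u\ge\Phi_{c_0}(\cdots+\beta_1')-\varepsilon$. If $\underline u_0\equiv0$, the lower bound asserted in the corollary is not trivial, because $\Phi_{c_0}(\cdot)-\varepsilon$ is positive behind the front. I would handle this in one of two ways. The first option is to read the hypothesis as requiring $\underline u_0$ nontrivial. The second option, whenever $u_0\not\equiv0$, is to apply the parabolic strong maximum principle: it gives $u(\cdot,1)>0$ everywhere, so $u(\cdot,1)$ dominates some small nontrivial compactly supported symmetric bump $\underline w_0(\rho_i)$. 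Restarting at time $1$ then yields a shifted constant, because $\log(t-1)=\log t+o(1)$ and the front profile is uniformly continuous.

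Finally, I set $\beta_1:=\min(\beta_1',\beta_2)$. Since $\Phi_{c_0}$ is decreasing, replacing $\beta_1'$ by a smaller value only lowers the left-hand side of the lower bound, which therefore still holds. This guarantees $\beta_1\le\beta_2$. I then take $t_\varepsilon$ to be the larger of the two times.

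The main obstacle is the monotone ordering of the shifts, together with the degenerate or time-shifted lower barrier. Ordering is resolved by the monotonicity of $\Phi_{c_0}$. I expect the time-shift argument, namely that the uniform convergence survives replacing $t$ by $t-1$, to be the only place needing a short estimate. That estimate uses $\|\Phi_{c_0}'\|_\infty<\infty$ and the bound $c_*\cdot1+\tfrac{3}{c_0}|\log\tfrac{t}{t-1}|$, which can be absorbed into the constant plus $\varepsilon$.
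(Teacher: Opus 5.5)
Your main line is correct and is the argument the paper intends. The paper states this corollary without a separate proof, saying only that it follows by comparison from Theorem~\ref{ThConvergenceTravellingWave}. Your version fills in that comparison:
\begin{itemize}
\item sandwich $u$ between the solutions issued from the symmetric barriers $\underline u_0(\rho_i)$ and $\min\{\overline u_0,1\}(\rho_i)$;
\item apply Theorem~\ref{ThConvergenceTravellingWave} to each barrier, which is legitimate since both are compactly supported from the right;
\item use that $\Phi_{c_0}$ is decreasing to replace $\beta_1'$ by $\min\{\beta_1',\beta_2\}$.
\end{itemize}

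Your ``second option'' for $\underline u_0\equiv 0$ is only valid for $i=\mathfrak{e}$. There $\{\rho_{\mathfrak e}\le b\}$ is compact, so positivity and continuity of $u(\cdot,1)$ give a positive minimum, and a bump $m\,\mathds{1}_{[0,b]}(\rho_{\mathfrak e})$ fits underneath. For $i\in\{\mathfrak h,\mathfrak p\}$ the slab $\{a\le\rho_i\le b\}$ is noncompact, namely $[a,b]\times\mathbb{H}^{d-1}$ or $[a,b]\times\mathbb{R}^{d-1}$. Pointwise positivity of $u(\cdot,1)$ gives no positive infimum on it, so no nontrivial symmetric bump need lie below $u(\cdot,1)$.

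In fact the conclusion is false in those cases. Take $u_0\not\equiv 0$ supported in a compact set $K$, with $\overline u_0=\mathds{1}_{[a,b]}$ and $\underline u_0\equiv 0$. By \eqref{LineralizationEq}, $u(x,t)\le e^{f'(0)t}\,|K|\,P_t(d_{\mathbb H^d}(x,K))$, and $P_t(r)\to 0$ as $r\to\infty$. Hence $\inf_{\{\rho_{\mathfrak p}(x)=0\}}u(x,t)=0$ for every $t$. The asserted lower bound would instead force $u\ge 1-2\varepsilon$ on that horosphere for large $t$, since $\Phi_{c_0}\big(-[c_*t-\tfrac{3}{c_0}\log t+\beta_1]\big)\to1$. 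The same happens for $\mathfrak h$ on the hyperplane $P$.

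So you must take your first option and read the hypothesis as $\underline u_0\not\equiv 0$; this is what makes the statement true. With that reading, $\overline u_0\not\equiv0$ follows automatically and your proof is complete. The restart-at-time-$1$ argument can be kept only as a remark for the elliptic case.
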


In the elliptic case we anticipate a sharper result: if the initial data are nontrivial and compactly supported, we conjecture the existence of a bounded Lipschitz function $\tilde{\beta}:\mathbb{S}^{d-1}\to\mathbb{R}$ such that
\begin{equation*}
	\lim\limits_{t\to\infty}\sup\limits_{\{x\in\Hd:\, \rho_{\mathfrak{e}}(x)\geq 0\}}\big|u(x,t)-\Phi_{c_0}\big(\rho_{\mathfrak{e}}(x)-[c_*t-\tfrac{3}{c_0}\log t+\tilde{\beta}(\theta_{\mathfrak{e}}(x))]\big)\big|=0.
\end{equation*}
In view of the counterexample constructed in~\cite{Rossi-2017} for the Euclidean setting, one should not expect $\tilde\beta$ to be constant.  

We predict a more intricate scenario if the initial datum \lq\lq detects'' infinity; see~\cite{tranlacionesRn} for the Euclidean case.

A second goal for the future would be to give the intermediate asymptotics for solutions when they vanish; that is, to determine their rate of decay to 0, and their possible convergence towards a profile once the solution is rescaled to annihilate the decay. If the initial data are compactly supported, one expects vanishing solutions to behave like a multiple of the heat kernel. One main difficulty would be to determine the right multiple, which should be given by the asymptotic mass of the solution, in case it is finite. We hope to return to this problem elsewhere.

\medskip

\noindent{\bf Organization of the paper.} In Section~\ref{section:geometry}, we present the necessary background on $\Hd$, its isometries and the three coordinate systems that we will chose in our analysis.  Section~\ref{section:analytic-preliminaries} gathers known preliminary results concerning the heat kernel and the existence and uniqueness of solutions to~\eqref{KPPproblem}. In Section~\ref{section:vs}, we extend the discussion of propagation versus extinction. We organize the proof of our main theorem, establishing the convergence of the solution in a specific moving coordinate system to the traveling wave of minimal speed, in two sections: in the first one, Section~\ref{section:strategy}, we describe the strategy of the proof, leaving the technical details for Section~\ref{section:details}. 

%%%%%%%%%%%%%%%%%%%%%%%%%%%%%%%%%%%%%%%%%%%%%%%%%%%%%%%%%%%%%%
%%%%%%%%%%%%%%%%%%%%%%%%%%%%%%%%%%%%%%%%%%%%%%%%%%%%%%%%%%%%%%
\section{Geometric background}\label{section:geometry}
\setcounter{equation}{0}

For the convenience of the reader, this section collects some background on the Laplace-Beltrami operator on the hyperbolic space  which we believe might be helpful for the PDE community. A classical reference is~\cite{Ratcliffe}, while a quick summary is found in \cite{BirindelliMazeoIsometriesExplicadas}.

\medskip

\noindent{\bf Laplace-Beltrami operator.}  Let $(\mathcal{M},g_{\mathcal{M}})$ be a $d$-dimensional, connected,  $C^{2}$-Riemannian manifold. The gradient $\nabla_{\mathcal{M}}$ of a $C^1$ real-valued function $f$ defined on the manifold is a vector field over $\mathcal{M}$ (with values in the tangent bundle) that evaluated in $x\in\mathcal{M}$ satisfies
\begin{equation*}
    \langle \nabla_{\mathcal{M}}f,\xi\rangle_{\mathcal{M}}=(f\circ w)'(0)
\end{equation*}
for all vectors $\xi$ in the tangent space of $\mathcal{M}$ at $x$, $T_x\mathcal{M}$, and for any path $w: \mathbb{R}\to \mathcal{M}$ such that $w(0)=x$ and $w'(0)=\xi$. Here, $\langle\cdot,\cdot\rangle_{\mathcal{M}}$ is the inner product in $T_x\mathcal{M}$ given by the metric $g_{\mathcal{M}}$. 

Let ${\rm d}\mu_{\mathcal{M}}$ be the volume element in $(\mathcal{M},g_{\mathcal{M}})$. The divergence operator of $\mathcal{M}$, $\operatorname{div}_{\mathcal{M}}$, is defined by duality as the unique linear map acting on vector fields of $\mathcal{M}$ such that 
\begin{equation*}
    \int_{\mathcal{M}}(\operatorname{div}_{\mathcal{M}} X)u\,{\rm d}\mu_{\mathcal{M}}= -\int_{\mathcal{M}}\langle X, \nabla_{\mathcal{M}}u\rangle_{\mathcal{M}}\,{\rm d}\mu_{\mathcal{M}}
\end{equation*}
for all vector fields  $X$ on $\mathcal{M}$ and $u\in C^{\infty}_0(\mathcal{M})$.

The Laplace-Beltrami operator in $\mathcal{M}$ is given by 
\begin{equation*}
    \Delta_{\mathcal{M}}u:=\operatorname{div}_{\mathcal{M}}\big(\nabla_{\mathcal{M}}u\big)\quad\textrm{for all }u:\mathcal{M}\to\mathbb{R}\textrm{ smooth}.
\end{equation*}
In terms of the metric $g_{\mathcal{M}}$, 
\begin{equation*}
	\Delta_\mathcal{M}=\frac{1}{\sqrt{\operatorname{det} g_{\mathcal{M}}}}\sum^{d}_{j=1}\sum^d_{l=1}\partial_j\big(\sqrt{\operatorname{det}g_{\mathcal{M}}}\left(g_{\mathcal{M}}\right)^{jl}\partial_l\big).
\end{equation*}
Here $(g_{\mathcal M})^{jl}$ is the inverse matrix of $(g_{\mathcal M})_{jl}.$

Assume now that $(\mathcal{M}, g_\mathcal{M})$ can be written as a \emph{warped product}. That is, let $(\mathcal{N},g_\mathcal{N})$ be  a $(d-1)$-submanifold embedded in $(\mathcal{M}, g_\mathcal{M})$ such that $\mathcal{M}=I\times\mathcal{N}$ and 
\[
    g_\mathcal{M}(\rho,\theta)={\rm d}\rho^{2}+\psi^{2}(\rho)g_{\mathcal{N}}(\theta)
\]
for all $\rho\in I$ and $\theta\in\mathcal{N}$, where $I\subset\mathbb{R}$ is an interval and $\psi:I\to[0,\infty)$. The Laplace-Beltrami operator in coordinates $\mathcal{M}\ni x=(\rho,\theta)\in I\times\mathcal{N}$  is given by
\begin{equation}\label{Laplace-WarpedProduct}
    \Delta_{\mathcal{M}}u=u_{\rho\rho}+(d-1)\frac{\psi'}{\psi}u_{\rho}+\frac{1}{\psi^2}\Delta_{\mathcal{N}}u.
\end{equation}
We remark that for each $\rho_0\in I$, the quotient $\psi'(\rho_0)/\psi(\rho_0)$ is the the mean curvature of $(\mathcal{N}(\rho_0), g_{\mathcal{N}})$ as an embedded submanifold of co-dimension one in $(\mathcal{M},g_{\mathcal{M}})$, where 
\[
    \mathcal{N}(\rho_0):=\{ (\rho,\theta)\in I\times \mathcal{N}:\,\rho=\rho_0\}.
\]

\medskip

\noindent{\bf The hyperbolic space $\big(\Hd,g\big)$} is the unique simply connected, complete $d$-dimensional Riemannian manifold with sectional curvature identically equal to $-1$.

Let $d_{\mathbb H^d}:\Hd\times\Hd\to[0,\infty)$ be the  hyperbolic  geodesic distance. For any two different points in $\Hd$ there is exactly one geodesic passing through them. Moreover, every geodesic is defined on the whole real line. A subset $A\subset\Hd$ is a totally geodesic hyperbolic subspace if, given any two different points in $A$, the entire geodesic passing through them is contained in $A$. A hyperplane $P$ is a totally geodesic subset of $\Hd$ of co-dimension one; in particular, $P$ is isomorphic to $\mathbb{H}^{d-1}$.  

Let us recall the definition of the reflection across a hyperplane $P\subset\Hd$. Consider $x\in\Hd\setminus P$, then there is a unique $y\in P$ such that $d_{\mathbb H^d}(x,y)=\min_{z\in P} d_{\mathbb H^d}(x,z)$. Let $\gamma: \mathbb{R}\to\Hd$ be the unique geodesic passing through $x$ and $y$  parametrized so that $\gamma(0)=y$ and $\gamma(1)=x$. Then, the reflection with respect to $P$ is the function $\mathcal{R}_P$ such that
\begin{equation*}
    \mathcal{R}_P:\Hd\to\Hd,\quad\mathcal{R}_P(x):=\gamma(-1).
\end{equation*}

The geodesic compactification $\overline{\Hd}=\Hd\cup\partial\Hd$ is obtained by fixing a point $x\in\Hd$ and adding a point at  both ends of each geodesic passing through $x$. The points of $\partial\Hd$ can be regarded as the points at infinity of $\Hd$. A horosphere centered at $q\in\partial\Hd$ is a closed hypersurface of $\Hd$ orthogonal to all the geodesics with endpoint $q$. Each horosphere is isomorphic to $\mathbb{R}^{d-1}$.

There are several isomorphic models describing $\Hd$. In this paper, we consider two of them: Poincaré's disk and Poincaré's half-space models.

\noindent{\it Poincaré's disk model} $\dhh$ is the unit ball $B_1(0):=\{x\in \mathbb{R}^d: | x|<1\}$, where $|\cdot|$ is the Euclidean metric, with the Riemannian metric $g$ given by
\begin{equation*}
	g_{jl}=\frac{4}{(1-|x|^2)^2}\delta_{jl}, \quad  x\in B_{1}(0),\;\;j,l\in\{1,...,d\},
\end{equation*}
where $\delta_{jl}=1$ if $j=l$ and $\delta_{jl}=0$ if $j\not =l$.
 
In this model:
\begin{itemize}
	\item geodesics are either semicircles orthogonal to $\partial B_1(0)$, or diameters of $B_1(0)$;
	\item a subset $P\subset\Hd$ is a hyperplane if and only if  $P$ is the intersection of $B_1(0)$ with a $(d-1)$-dimensional  Euclidean sphere orthogonal to $\partial B_1(0)$ or  it is  the intersection of $B_1(0)$ with a Euclidean hyperplane $\pi\subset\mathbb{R}^d$ such that $0\in\pi$;
	\item a horosphere with center $q\in\partial B_1(0)$ is a $(d-1)$-dimensional Euclidean sphere tangent to $\partial B_1(0)$ in $q$ and contained in $B_1(0)$;
	\item the Laplace-Beltrami operator is
	\begin{equation*}
		\Delta_{\Hd}u=\frac{1}{4}(1-|x|^2)^2\sum_{j=1}^{d}\frac{\partial^2 u}{\partial x_{j}^2}+\frac{d-2}{2}(1-|x|^2)\sum_{j=1}^{d}x_j\frac{\partial u}{\partial x_{j}}.
	\end{equation*}
\end{itemize}
For a visual representation of some geodesics, hyperplanes and horospheres in $\mathbb{D}^3$, see Figure~\ref{FigurePoincaredisc}.
\begin{figure}[H]
	\begin{center}
		\subfigure[Geodesics.]{
			\begin{tikzpicture}[scale=0.6]
			\clip (-3.05,-3.05) rectangle (3.05,3.05) ;% que rectángulo pintamos			
			% Draw the vertical diameter
			\draw[thick] (0,-3) -- (0,3);
			% Draw the horizontal dashed circle (equator)
			\draw[dashed] (0,0) ellipse (3cm and 0.6cm);
			% Draw a diagonal line
			\draw[thick] (1.8,2) -- (-1.8,-2);
			%\draw[thick] (-1.5,2.598) -- (-2.598,1.5);	
			% Add points
			\filldraw[black] (0,3) circle (2pt); % top point
			\filldraw[black] (0,-3) circle (2pt); % bottom point
			\filldraw[black] (0,0) circle (2pt); % center point
			\filldraw[black] (1.8,2) circle (2pt); % left point
			\filldraw[black] (-1.8,-2) circle (2pt); % right point
			
			%geodesica circular superior
			\draw[thick] (-2.049,2.049) circle (0.776cm);
			\fill[white] (-1.5-0.2,2.598+0.2) circle (0.25cm);%poniendoblanco
			\filldraw[black] (-1.5,2.598) circle (2pt);
			\fill[white] (-2.598-0.2,1.5+0.2) circle (0.25cm);%poniendoblanco
			\filldraw[black] (-2.598,1.5) circle (2pt);
			
			\fill[white ] (-6*0.7181,0) -- (0,6*.7181) -- (-6*0.7181-4,0) -- (0,6*.7181+4)  -- cycle;%poniendo blanco
			%geodesica circular inferior
			\draw[thick] (1.8365,-1.8365) circle (1.5cm);
			\fill[white] (0.776+0.2,-2.897-0.2) circle (0.2cm);%poniendoblanco
			\filldraw[black] (0.776,-2.897) circle (2pt);
			\fill[white] (2.897+0.2,-0.776-0.2)circle (0.20cm);%poniendoblanco
			\filldraw[black] (2.897,-0.776) circle (2pt);
			\fill[white] (3,-3)circle (0.5cm);%poniendoblanco
			%-------
			\node[right] at (3*0.7181,3*0.7181) {\tiny$ \mathbb{H}^{3}$};
			% Draw the outer circle (the sphere)
			\draw[thick] (0,0) circle (3cm);
		\end{tikzpicture}
		}
        \hskip1cm
		\subfigure[Hyperplanes.]{
				\begin{tikzpicture}[scale=0.6]
			\clip (-3.05,-3.05) rectangle (3.05,3.05) ;% que rectángulo pintamos
			% Draw the horizontal dashed circle (equator)
			\draw[thick, fill=darkred,  fill opacity=0.6] (0,0) ellipse (3cm and 0.6cm);
			%Elipse rotada
			\draw[thick, rotate=45,  fill=darkred,  fill opacity=0.6] (0,0) ellipse (3cm and 0.6cm);
			% Add points
			\filldraw[black] (0,0) circle (2pt); % center point
			% plano geodesico circular
			\draw[thick,  fill=darkred,  fill opacity=0.6] (-2.049,2.049) circle (0.776cm);
			%\filldraw[black] (-2.049,2.049) circle (2pt);
			\draw[thick, rotate=45] (0,2.95) ellipse (0.776cm and 0.3cm);
			%(aquí también se giran los ejes de referencia)
			%------
			\node[right] at (3*0.7181,3*0.7181) {\tiny$ \mathbb{H}^{3}$};
			
			%poniendo blanco
			\fill[white ] (-6*0.7181,-0.05) -- (0.1,6*.7181) -- (-6*0.7181-4,0) -- (0,6*.7181+4)  -- cycle;
			\fill[white] (-1.5-0.2,2.598+0.2) circle (0.27cm);%poniendoblanco
			\fill[white] (-1.5-0.3,2.598+0.17) circle (0.31cm);%poniendoblanco
			%\filldraw[black] (-1.5,2.598) circle (2pt);
			\fill[white] (-2.598-0.2,1.5+0.2) circle (0.27cm);%poniendoblanco
			\fill[white] (-2.598-0.15,1.5+0.35) circle (0.32cm);%poniendoblanco
			% Draw the outer circle (the sphere)
			\draw[thick] (0,0) circle (3cm);
		\end{tikzpicture}
		}
        \hskip1cm
			\subfigure[Horospheres centered at~$q$.]{
			\begin{tikzpicture}[scale=0.6]
			% Draw the outer circle (the sphere)
			\draw[thick] (0,0) circle (3cm);
			
			% Draw the horizontal dashed circle (equator)
			\draw[dashed] (0,0) ellipse (3cm and 0.6cm);
			%Horoesfera grande
			\draw[ fill=darkgreen,  fill opacity=0.7]  (-1.7,0) circle (1.3cm);
			%	\draw[thick] (-1.7,0) circle (1.3cm); %sin colorear
			\draw[dashed] (-1.7,0) ellipse (1.3cm and 0.26cm);
			
			%Horoesfera pequeña
			\draw[ fill=darkgreen,  fill opacity=0.7] (-2.1,0) circle (0.9cm);
			%\draw[thick] (-2.1,0) circle (0.9cm); %sin colorear
			\draw[dashed] (-2.1,0) ellipse (0.9cm and 0.18cm);
			% Add labels
			\node[left] at (-3.01,0) {$q$};
			\node[right] at (3*0.7181,3*0.7181) {\tiny$ \mathbb{H}^{3}$};
			
			% Add points
			\filldraw[black] (0,0) circle (2pt); % center point
			\filldraw[black] (-3,0) circle (2pt); % point q
		\end{tikzpicture}
		}
		\caption{\label{FigurePoincaredisc}Poincaré disk model $\mathbb{D}^3$.}
	\end{center}
\end{figure}

\noindent{\it The Poincaré half-space model} $\uh$ is the upper half-space $\{x\in\Rd:\,x_d>0\}$ endowed with the Riemannian metric
\begin{equation*}
	g_{jl}=\frac{1}{x_d^2}\delta_{jl}, \quad  x\in\Rd:\,x_d>0,\;\; j,l\in\{1,...,d\}.
\end{equation*}
Here:
\begin{itemize}
	\item geodesics are either upper half-lines or upper semicircles orthogonal to $\{x_{d}=0\}$;
	\item a subset $P\subset\Hd$ is a hyperplane if and only if  $P$ is the intersection of  $\{x\in\Rd:\,x_d>0\}$ with a Euclidean hyperplane  orthogonal to $\{x_d=0\}$ or it is the intersection of $\{x\in\Rd:\,x_d>0\}$ with a $d$-dimensional  Euclidean sphere with center in $\{x_d=0\}$;
	\item  for a given $q\in\partial\Hd$, we can identify $q$ with the point $-\infty$. Then, in $\uh$ any horosphere with center $q$ is given by $\{ x_d=\delta\}$ with $\delta>0$;
	\item the Laplace-Beltrami operator is
\begin{equation*}	\Delta_{\Hd}u=x_d^2\sum^{d}_{j=1}\frac{\partial^2u}{\partial x_j^2}+(2-d)x_d\frac{\partial u}{\partial x_d}.
\end{equation*}
\end{itemize}
For a visual representation of some geodesics, hyperplanes and horospheres in $\mathbb{U}^3$, see Figure~\ref{FigurePoincareHalf}.
\begin{figure}[H]
	\begin{center}
		\subfigure[Geodesics.]{
			\begin{tikzpicture}[scale=0.4]
			%labels
			\node[right] at (4,6)  {\tiny$\mathbb{H}^3$};
			
			\node[left] at (-0.1,5.7)  {\tiny$x_3$};
			\node[right] at (3.2,-1.3)  {\tiny$\{x_3=0\}$};

			%Geodesicas semicirculos
			\draw[thick] (2.2,0.6) circle (1.5cm);%GeodesicaCircular1
			\fill[white ] (2.2+1.5+0.1,0.6)  -- (2.2-1.5-0.1,0.6) -- (2.2-1.5,-1.8+0.6)  --	(2.2+1.5,-1.8+0.6)   --cycle; %pintar en blanco
			\filldraw[black] (2.2-1.5,0.6) circle (2pt); %end
			\filldraw[black] (2.2+1.5,0.6) circle (2pt);
			(1cm);
			%Geodesicas verticales
			\draw[thick] (-1.8,0.4) -- (-1.8,6+0.4);
			\filldraw[black] (-1.8,0.4) circle (2pt);
			%\draw[thick] (4.4,0.9) -- (4.4,6+0.9);
			%	\filldraw[black] (4.4,0.9) circle (2pt);
			\filldraw[black] (0,0) circle (2pt); % center point
			\draw[thick] (0,0) -- (0,6);
			%base
			\draw (-5,-1) -- (3,-1) -- (5,1) -- (-3,1) -- cycle; %base
		\end{tikzpicture}
		}
        \hskip0.6cm
		\subfigure[Hyperplanes.]{
			\begin{tikzpicture}[scale=0.4]
			\draw[thick] (0,0) -- (0,6);% eje
			\node[left] at (-0.2,5.8)  {\tiny$x_3$};
			\node[right] at (3.2,-1.3)  {\tiny$\{x_3=0\}$};
			\node[right] at (4,6)  {\tiny$\mathbb{H}^3$};
			
			%Planos geodésicos circulares
			\draw[ fill=darkred,  fill opacity=0.6]  (2.2,0.6) circle (1.5cm);
			% \draw[dashed, rotate=45] (1.4,-1.47) ellipse (1.4cm and 0.2cm);
			
			\fill[white ] (2.2+1.5+0.1,0.6)  -- (2.2-1.5-0.1,0.6) -- (2.2-1.5,-1.8+0.6)  --	(2.2+1.5,-1.8+0.6)   --cycle; %pintar en blanco
			%para recortar la elipse
			\begin{scope}
				\clip (0,0) rectangle (4.5,0.6); % Recorta la mitad superior
				\fill[darkred, opacity=0.6] (2.2,0.6) ellipse (1.5cm and 0.15cm); % Elipse rellena
			\end{scope}
			\draw[dashed] (2.2,0.6) ellipse (1.5cm and 0.15cm);
			
			% Planos geodesicos
			\draw (-1,-1) -- (1,1) -- (1,7) -- (-1,5) -- cycle;
			\fill[thick, fill=darkred,  fill opacity=0.6] (-1,-1) -- (1,1) -- (1,7) -- (-1,5) -- cycle;
			\draw (-2.5-1,-1) -- (-2.5+1,1) -- (-2.5+1,7) -- (-1-2.5,5) -- cycle;
			\fill[thick, fill=darkred,  fill opacity=0.6]
			(-2.5-1,-1) -- (-2.5+1,1) -- (-2.5+1,7) -- (-1-2.5,5) -- cycle;
			%Suelo
			\draw[thick](-5,-1) -- (3,-1);
			\draw[thick] (3,-1)--(5,1);
			\draw[thick] (3.65,1)--(5,1);
			\draw[dashed] (-1,1)--(3.65,1);
			\draw[thick] (-2.5+1,1)--(-1,1);
			\draw[dashed] (-3,1)--(-2.5+1,1);
			\draw[thick] (-5,-1)--(-2.5-1,0.5);
			\draw[dashed] (-3.5,0.5)--(-3,1);
		\end{tikzpicture}
		}
        \hskip0.6cm
		\subfigure[Horospheres centered at~$q$.]{
			\begin{tikzpicture}[scale=0.4]
					\node[right] at (4,6.7)  {\tiny$\mathbb{H}^3$};
			% Dibujar el romboide
			%\draw (-5,-1) -- (3,-1) -- (5,1) -- (-3,1) -- cycle; %base
			\draw[thick] (-5,-1)--(3,-1);%base
			\draw[thick] (3,-1)--(5,1);%base
			\draw[thick] (5,1)--(3.5,1);
			\draw[dashed](3.5,1)--(-3,1);%base
			\draw[dashed](-3,1)--(-3.5,0.5);%base
			\draw[thick](-3.5,0.5)--(-5,-1);%base
			\draw[thick](0,6)--(0,4);
			\draw[dashed] (0,4) -- (0,3);
			\draw[thick](0,3) -- (0,1.8);
			\draw[dashed] (0,4-1.8) -- (0,0.5);
			\draw[thick](0,0.5) -- (0,0);
			
			\node[left] at (-0.1,5.7)  {\tiny$x_3$};
			\node[centered] at (-0,6.7)  {\tiny$q$};
			\node[right] at (3.2,-1.3)  {\tiny$\{x_3=0\}$};
			% Horoesferas
			\draw (-5,-1+4) -- (3,-1+4) -- (5,1+4) -- (-3,1+4) -- cycle;
			\fill[thick, fill=darkgreen,  fill opacity=0.6] (-5,-1+4) -- (3,-1+4) -- (5,1+4) -- (-3,1+4) -- cycle;
			\draw (-5,-1+1.5) -- (3,-1+1.5) -- (5,1+1.5) -- (-3,1+1.5) -- cycle;
			\fill[thick, fill=darkgreen,  fill opacity=0.6] (-5,-1+1.5) -- (3,-1+1.5) -- (5,1+1.5) -- (-3,1+1.5) -- cycle;
		\end{tikzpicture}
		}
		\caption{\label{FigurePoincareHalf}Poincaré half-space model $\mathbb{U}^3$.}
	\end{center}
\end{figure}

\noindent{\bf Isometries of the hyperbolic space.} There are three subgroups of $\operatorname{Isom}(\Hd)$ which have orbits in $\Hd$ of co-dimension one: $H_\mathfrak{e}$, $H_\mathfrak{h}$ and $H_\mathfrak{p}$. The subscripts \lq\lq$\mathfrak{e}$\rq\rq, \lq\lq$\mathfrak{h}$\rq\rq and \lq\lq$\mathfrak{p}$\rq\rq stand for elliptic, parabolic and hyperbolic, respectively.

\noindent\underline{\it $H_\mathfrak{e}$, elliptic isometries.} The action of $H_\mathfrak{e}$ is by rotations around a fixed point $p\in\Hd$, that we identify with the origin or by reflections across a totally geodesic hyperplane $P$ such that $p\in P$.  Then, the invariant sets are  hyperbolic spheres centered at the origin.

\noindent\underline{\it $H_\mathfrak{h}$, hyperbolic isometries.} To describe this subset of isometries we select  a totally geodesic hyperplane $P\subset\Hd$.  Since $P$ is isomorphic to $\mathbb{H}^{d-1}$, any isometry $h$ in $P$ can be uniquely extended to an isometry of the full space $\Hd$, up to a reflection across $P$. The subgroup $H_\mathfrak{h}$ is composed of all isometries constructed in this manner (without reflections).  If we identify $P$ with the horizontal plane passing through the origin in the Poincaré disk model, the \textit{half-rugby ball} sets (defined  precisely in~\eqref{Rugbyball}) remain invariant under the action of such isometries and are therefore isometric to $\mathbb{H}^{d-1}$.

\noindent\underline{\it $H_\mathfrak{p}$, parabolic isometries.}   The parabolic isometries are   Euclidean   motions on each horosphere centered at a point $q\in\partial\Hd$. Here, the invariant sets are the horospheres. Without loss of generality we can identify $q$ with $-\infty$.

We notice here that each particular isometry (elliptic, hyperbolic, parabolic) involves a choice (the origin point, the hyperplane $P$, the point $q$, respectively), which will be implicitly assumed in our discussion.

Associated with those subgroups of $\operatorname{Isom}(\Hd)$, we introduce three different coordinate systems in which $(\Hd, g)$ is a warped product, which yields a simplified formula for the Laplace-Beltrami operator~$\Delta_{\Hd}$ as given by~\eqref{Laplace-WarpedProduct}.

\noindent $\bullet$ \underline{$\mathfrak{e}$-coordinates (radial coordinates):}  \ Let  $ \dhh=\{x\in\Rd, |x|\leq 1\}$, with $|x|:=\big(\sum_{j=1}^dx_j^{2}\big)^{1/2}$. The $\mathfrak{e}$-coordinates of $x\in\dhh$ are given by
\begin{equation*}
    \rho_\mathfrak{e}(x):=\log\left(\frac{1+|x|}{1-|x|}\right),\qquad\theta_\mathfrak{e}(x):=\frac{x}{|x|},\quad\textrm{so that }(\rho_\mathfrak{e},
\theta_\mathfrak{e})\in\mathbb{R}_+\times\mathbb{S}^{d-1}.
\end{equation*}
Thus, the Poincaré disk $\dhh$ can be regarded as $\mathbb{R}_+\times\mathbb{S}^{d-1}$; see Figure~\ref{FigureElliptic}. Arrowed lines there represent the  geodesics parametrized by $\rho_{\mathfrak e}\geq0$. We color in blue some  hyperbolic spheres centered at the origin.
\begin{figure}[H]
    \begin{center}
        \begin{tikzpicture}[scale=0.7]
            \clip (-3.05,-3.05) rectangle (3.05,3.05) ;% que retángulo pintamos
            % Draw the outer circle (the sphere)
            \draw[thick] (0,0) circle (3cm); %sin colorear
            \draw[dashed] (0,0) ellipse (3cm and 0.6cm);
            %esfera grande
            \draw[ fill=darkblue,  fill opacity=0.5](0,0) circle (2cm);
            %	\draw[thick] (0,0) circle (2cm); %sin colorear
            \draw[dashed] (0,0) ellipse (2cm and 0.4cm);
            %esfera pequeña
            \draw[ fill=darkblue,  fill opacity=0.6] (0,0) circle (1cm);
            %\draw[thick] (0,0) circle (1cm); % sin colorear
            \draw[dashed] (0,0) ellipse (1cm and 0.2cm);
            % Add points
            \filldraw[black] (0,0) circle (2pt); % center point
            %Las geodesicas radiales% Dibuja el segmento con cabeza de flecha
            \draw[thick,->] (0,0) -- (3,0);
            \draw[thick,->] (0,0) -- (1,0);
            \draw[thick,->] (0,0) -- (2,0);
            %------
            \draw[thick,->] (0,0) -- (-2.338,1.35);
            \draw[thick,->] (0,0) -- (-2.598/3,1.5/3);
            \draw[thick,->] (0,0) -- (-2*2.598/3,2*1.5/3);
            %--------------
            \draw[thick,->] (0,0) -- (0.2588,0.9659);
            \draw[thick,->] (0,0) -- (2*0.2588,2*0.9659);
            \draw[thick,->] (0,0) -- (2.8*0.2588,2.8*0.9659);
            %-----------------
            \draw[thick,->] (0,0) -- (-0.7071,-0.7071);
            \draw[thick,->] (0,0) -- (-2*0.7071,-2*0.7071);
            \draw[thick,->] (0,0) -- (-3*0.7071,-3*0.7071);
            % Nombres
            \node[right] at (0.685,0.7) {{\tiny$\approx \mathbb{S}^{2}$}};
            \node[right] at (2*0.7181,2*0.7181) {{\tiny$\approx\mathbb{S}^{2}$}};
            \node[right] at (3*0.7181,3*0.7181) {$ \mathbb{H}^{3}$};
        \end{tikzpicture}
        \end{center}
        \caption{$\mathbb{D}^3$ as a warped product induced by elliptic coordinates.}
        \label{FigureElliptic}
\end{figure}

\noindent$\bullet$ \underline{$\mathfrak{h}$-coordinates:} \ Let $\dhh=\{x\in \Rd, |x|\leq 1\}$, where $|x|:=\big(\sum_{j=1}^dx_j^{2}\big)^{1/2}$. The $\mathfrak{h}$-coordinates  are given by
\begin{equation*}
    \rho_\mathfrak{h}(x)=\operatorname{arsinh}\!\left(\frac{2x_n}{1-|x|^2}\right),\qquad 
    \theta_{\mathfrak{h}}(x)=\frac{2x'}{1+|x|^2+\sqrt{(1-|x|^2)^2+4x_n^2}}.
\end{equation*} 
We define the \textit{half-rugby ball} set of level $s$ ---the name is inspired by its shape in the three dimensional Poincaré disk model~$\mathbb{D}^3$---, which is isometric to $\mathbb{H}^{d-1}$, as
\begin{equation}\label{Rugbyball}	
    \mathcal{R}_{\mathfrak{h}}(s):=\left\{x\in\Hd: \rho_\mathfrak{h}(x)=s\right\}\quad\textrm{with }s\in\mathbb{R}.
\end{equation}
Thus $ (\rho_\mathfrak{h},\theta_\mathfrak{h})\in\mathbb{R}\times\mathbb{H}^{d-1}$. This allows us to regard $\dhh$ as $\mathbb{R}_+\times\mathbb{H}^{d-1}$; see Figure~\ref{FigureRubgy}.
Arrowed curves there represent geodesics, parametrized by $\rho_{\mathfrak{h}}\in\mathbb{R}$. We color in orange some half-rugby balls, including the hyperplane $P$.

\begin{figure}[H]
    \begin{center}
        \subfigure
        %[Poincaré disk model $\mathbb{D}^3$.]
        {
            \begin{tikzpicture}[scale=0.7]
                %recortador
                \clip (-3.6,-3.6) rectangle (3.6,3.6);
                %Esfera de afuera $ \Hd$
                % Draw the outer circle (the sphere)
                \draw[thick] (0,0) circle (3cm);
                %\draw[thick, fill=orange,  fill opacity=0.9] (0,0) ellipse (3cm and 0.6cm);
                
                \filldraw[black] (0,0) circle (2pt); % center point
                
                %------------ Balones de rugby
                \begin{scope}
                    \clip (-3.1,0) rectangle (3.1,2.1); % Recorta la mitad inferior
                    \draw[thick, fill=orange,  fill opacity=0.6] (0,0) ellipse (3cm and 1.5cm);
                    \draw[thick, fill=orange,  fill opacity=0.4](0,0) ellipse (3cm and 2cm);
                \draw[dashed, fill=orange,  fill opacity=1] (0,0) ellipse (3cm and 0.6cm);
                \end{scope}
                    \begin{scope}
                    \clip (-3.1,0) rectangle (3.1,-2.1); % Recorta la mitad superior
                    \draw[thick, fill=orange,  fill opacity=1] (0,0) ellipse (3cm and 0.6cm);
                \end{scope}
                \node[right] at (2*0.7181+0.17,2*0.7-0.12) { {\tiny$\approx\mathbb{H}^{2}$}};
                \node[right] at (2.5*0.7181+0.2,2.5*0.7-0.17)  { {\tiny$\approx\mathbb{H}^{2}$}};    
                \node[right] at (3,0) { \textcolor{orange}{$P$}};             
                
                %---------Poner flechitas
                \draw[thick,->](-1.5,1.2)--(-1.55,1.3);
                \draw[thick,->](1.5,1.2)--(1.55,1.3);
                \draw[thick,->](0.97,1.3)--(1,1.43);
                \draw[thick,->](-0.97,1.3)--(-1,1.43);
                \draw[thick,->](1.09,1.7)--(1.15,1.85);
                \draw[thick,->](-1.09,1.7)--(-1.15,1.85);
                \draw[thick,->](1.67,1.55)--(1.74,1.65);
                \draw[thick,->](-1.67,1.55)--(-1.74,1.65);
                \draw[thick,->](2.02,2.02)--(2.15,2.12);
                \draw[thick,->](-2.02,2.02)--(-2.15,2.12);
                \draw[thick,->](1.45,2.52)--(1.5,2.62);
                \draw[thick,->](-1.45,2.52)--(-1.5,2.62);
                %Geodesicas rectas
                \draw[thick,->] (0,2) -- (0,3);
                \draw[thick,->] (0,1.5) -- (0,2);
                \draw[thick,->] (0,0) -- (0,1.5);
                \draw[thick,->] (0,-3) -- (0,0);
                % Geodesicas curvas
                \draw[thick] (-3*1.4142,0) circle (3 cm);
                \draw[thick,->](-3*1.4142+3,-0.2)--(-3*1.4142+3,0);
                \draw[thick] (3*1.4142,0) circle (3 cm);	   \draw[thick,->](3*1.4142-3,-0.2)--(3*1.4142-3,0);
                \draw[thick] (-6,0) circle (5.196 cm);
                \draw[thick,->](-6+5.196,-0.2)--(-6+5.196,0);
                \draw[thick] (6,0) circle (5.196 cm);
                \draw[thick,->](6-5.196,-0.2)--(6-5.196,0);
                %Pintar de blanco
                \fill[white] (-2.55,-2.44) circle (0.513cm);
                \fill[white] (2.55,-2.44) circle (0.513cm);
                \fill[white] (-2.55,2.44) circle (0.513cm);
                \fill[white] (2.55,2.44) circle (0.513cm);
                
                \fill[white] (-1.7,-3) circle (0.428cm);
                \fill[white] (1.7,-3) circle (0.428cm);
                \fill[white] (-1.7,3) circle (0.428cm);
                \fill[white] (1.7,3) circle (0.428cm);
                %Pintar de blanco
                \fill[white ] (0,5) -- (0,6) -- (11.4,6) --	(11.4,0)  -- (5,0) --cycle;
                \fill[white ] (0,-5) -- (0,-6) -- (11.4,-6) --	(11.4,0)  -- (5,0) --cycle; 	
                \fill[white ] (0,5) -- (0,6) -- (-11.4,6) --	(-11.4,0)  -- (-5,0) --cycle; 				
                \fill[white ] (0,-5) -- (0,-6) -- (-11.4,-6) --	(-11.4,0)  -- (-5,0) --cycle;
                %------ H3 simbolo
                \node[right] at (3.2*0.7181,3.2*0.7-0.1) {$ \mathbb{H}^{3}$};
            \end{tikzpicture}
        }
        \subfigure
        %[Poincaré half-space model $\mathbb{U}^3$.]
        {
            \begin{tikzpicture}[scale=0.6]
                \clip (-5.2,-1.1) rectangle (6*0.866+1.9+0.2,7+0.4) ;% que retángulo pintamos
                
                %	\filldraw[black] (0,0) circle (2pt); % center point
            
                \node[left] at (-0.2,5.8)  {$x_3$};
                \node[left] at (6*0.866+1.5,6.9)  {$\mathbb{H}^3$};
                \node[right] at (3.5,-0.7)  {$\{x_3=0\}$};
                %Planos-balones de rugby
                \draw (-1,-1) -- (1,1) -- (1,7) -- (-1,5) -- cycle;
                \fill[thick, fill=orange,  fill opacity=1] (-1,-1) -- (1,1) -- (1,7) -- (-1,5) -- cycle;
                \fill[thick, fill=white,  fill opacity=1] (-1,-1) -- (1,4*0.866-1)--(1,1)--cycle;%pintar esquina blanco
                \node[right] at (1,6.7) { \textcolor{orange}{$P$}};
                %-Plano Girado 30º
                \draw  (-1,-1) -- (6*0.866-1,6*0.5-1)--(6*0.866+1,6*0.5+1)--(1,1)--cycle;
                \fill[thick, fill=orange,  fill opacity=0.4] (-1,-1) -- (6*0.866-1,6*0.5-1)--(6*0.866+1,6*0.5+1)--(1,1)--cycle;
                \node[right] at (6*0.866+1-0.2,6*0.5+1-0.3) { {\tiny$\approx\mathbb{H}^{2}$}};
                %-Plano Girado 60º
                \draw  (-1,-1) -- (6*0.5-1,6*0.866-1)--(6*0.5+1,6*0.866+1)--(1,1)--cycle;
                \fill[thick, fill=orange,  fill opacity=0.7] (-1,-1) -- (6*0.5-1,6*0.866-1)--(6*0.5+1,6*0.866+1)--(1,1)--cycle;
                \node[right] at (6*0.5+1-0.2,6*0.866+1-0.3) { {\tiny$\approx\mathbb{H}^{2}$}};
                % Geodésicas
                \begin{scope}
                    \clip (-3,0) rectangle (3,3); % Recorta la mitad inferior
                    \draw[thick]  (0,0) circle (2cm);
                    \draw[thick]  (0,0) circle (3cm);
                \end{scope}
                %\draw[thick]  (0,0) circle (2cm);
                \draw[thick,->](-0.1,2)--(0,2);
                %\filldraw[black] (0,2) circle (1pt);
                \draw[thick,->](2*0.5-0.1,2*0.866+0.05) --(2*0.5,2*0.866) ;
                %\filldraw[black] (2*0.5,2*0.866) circle (1pt);
                \draw[thick,->](2*0.866-0.05,2*0.5+0.1) --(2*0.866,2*0.5) ;
                % \filldraw[black] (2*0.866,2*0.5) circle (1pt);
                \draw[thick,->](-2,0)--(-2,0.2);
                %\filldraw[black] (-2,0) circle (1pt);
                \draw[thick,->](2,0.2)--(2,0);
                % \filldraw[black] (2,0) circle (1pt);
                %	\draw[thick]  (0,0) circle (3cm);
                \draw[thick,->](-0.1,3)--(0,3);
                %\filldraw[black] (0,3) circle (1pt);
                \draw[thick,->](3*0.5-0.1,3*0.866+0.05)--(3*0.5,3*0.866);
                %\filldraw[black] (3*0.5,3*0.866) circle (1pt);
                \draw[thick,->](3*0.866-0.05,3*0.5+0.05) --(3*0.866,3*0.5) ;
                %\filldraw[black] (3*0.866,3*0.5) circle (1pt);
                \draw[thick,->](3,0.2)--(3,0);
                %\filldraw[black] (3,0) circle (1pt);
                
                % Dibujar el romboide (suelo)
                \draw[thick] (-5,-1) -- (3,-1); %abajo
                \draw[thick] (3,-1)-- (5,1);%izq
                \draw[thick] (-3,1)-- (-1.75,1);%arriba izq
                \draw[dashed] (-1.75,1)-- (2.5,1);%arriba centro
                \draw[thick] (5,1)-- (2.5,1);%arriba dcha
                \draw[thick] (-5,-1) -- (-3,1);%dcha
                
                %Eje ${x_3=0}$
                    \draw[thick] (0,0) -- (0,6.4);
            \end{tikzpicture}
        }
        \end{center}
        \caption{$\mathbb{D}^3$ and $\mathbb{U}^3$ as warped products induced by hyperbolic coordinates.}
        \label{FigureRubgy} 
\end{figure}

\noindent$\bullet$ \underline{$\mathfrak{p}$-coordinates:} \	Let $\uh=\{x\in\Rd:\,x_{d}>0\}$. The $\mathfrak{p}$-coordinates of  $x\in\uh$ are given by
\begin{equation*}
	\rho_\mathfrak{p}=-\log x_d,\qquad \theta_{\mathfrak{p}_{j}}=x_{j}\quad\textrm{for }j\in\{1,...,d-1\},\quad\textrm{so that }(\rho_\mathfrak{p},\theta_\mathfrak{p})\in \mathbb{R}\times \mathbb{R}^{d-1}.
\end{equation*}
Thus, the upper half-plane $\uh$ can be regarded as $\mathbb{R}_+\times\mathbb{R}^{d-1}$; see Figure~\ref{FigureParabolic}. Arrowed lines and curves represent  geodesics parametrized by $\rho_{\mathfrak h}\in\mathbb{R}$. We color in green some horospheres.

\begin{figure}[H]
    \begin{center}
        \subfigure
        %[Poincaré disk model.]ç
        {
            \begin{tikzpicture}[scale=0.7]
            % Que circulo pintamos
            \clip (-4.2,-3.6) rectangle (3.6,3.6);
            
            % Draw the horizontal dashed circle (equator)
            \draw[dashed] (0,0) ellipse (3cm and 0.6cm);
            %Horoesfera grande
            %	\draw[thick] (-1.7,0) circle (1.3cm); sin colorear
            \draw[ fill=darkgreen,  fill opacity=0.5] (-1.7,0) circle (1.3cm);
            \draw[dashed] (-1.7,0) ellipse (1.3cm and 0.26cm);
            
            %Horoesfera pequeña
            %	\draw[thick] (-2.1,0) circle (0.9cm); sin colorear
            \draw[ fill=darkgreen,  fill opacity=0.7] (-2.1,0) circle (0.9cm);
            \draw[dashed] (-2.1,0) ellipse (0.9cm and 0.18cm);
            % Add labels
            \node[right] at (-2.1,1) {{\tiny$\approx\mathbb{R}^{2}$}};
            \node[right] at (-1.7,1.5) {{\tiny$\approx\mathbb{R}^{2}$}};
            \node[right] at (3*0.7181,3*0.7181) {$ \mathbb{H}^{3}$};
            
            % Add points
            \filldraw[black] (0,0) circle (2pt); % center point
            \filldraw[black] (-3,0) circle (2pt); % point -$\infty$
            % Geodesica recta
            \draw[thick,->] (-3,0) -- (3,0);
            \draw[thick,->] (-3,0) -- (-0.4,0);
            \draw[thick,->] (-3,0) -- (-1.2,0);
            %-------
            
            %  Geodesicas curva grande
            \draw[thick] (-3, 5.2) circle (5.2cm);
            \draw[thick] (-3, -5.2) circle (5.2cm);
            %  Geodesicas curva pequeña
            \draw[thick] (-3, 2.5) circle (2.5cm);
            \draw[thick] (-3, -2.5) circle (2.5cm);
            %----
            % Poniendo blanco
            \fill[white ] (3,3.07) -- (3,6) -- (-6,6) -- (-6,3.07)  -- cycle;
            \fill[white ] (-3.07,5) -- (-6,5) -- (-6,-5) -- (-3.07,-5)  -- cycle;
            \fill[white ] (3,-3.07) -- (3,-6) -- (-6,-6) -- (-6,-3.07)  -- cycle;
            \fill[white] (1.7,3.1) circle (0.53cm);%poniendoblanco
            \fill[white] (1.7,-3.1) circle (0.53cm);%poniendoblanco
            \fill[white] (-0.5,3.4) circle (0.44cm);%poniendoblanco
            \fill[white] (-0.5,-3.4) circle (0.44cm);%poniendoblanco
            %Flechitas
            \draw[thick,->] (-1.6,0.44) -- (-1.4,0.58);
            \draw[thick,->] (-1.6,-0.44) -- (-1.4,-0.58);
            \draw[thick,->] (-1.4,0.25) -- (-1.25,0.3);
            \draw[thick,->] (-1.4,-0.25) -- (-1.25,-0.3);
            \draw[thick,->] (-0.6,0.58) -- (-0.55,0.6);
            \draw[thick,->] (-0.6,-0.58) -- (-0.55,-0.6);
            \draw[thick,->] (-1.1,0.9) -- (-0.95,1.1);
            \draw[thick,->] (-1.1,-0.9) -- (-0.95,-1.1);
            \draw[thick,->] (1.4,2.45) -- (1.5,2.58);
            \draw[thick,->] (1.4,-2.45) -- (1.5,-2.58);
            \draw[thick,->] (-0.525,2.8) -- (-0.53,2.95);
            \draw[thick,->] (-0.525,-2.8) -- (-0.53,-2.95);
            %label
            \node[left] at (-2.9,0.2) {$-\infty$};
            %esfera de afuera
            \draw[thick] (0,0) circle (3cm);
        \end{tikzpicture}
        }\;\;\;
        \subfigure
                %[Poincaré half-space model.]
                {
                \begin{tikzpicture}[scale=0.6]
                        \node[right] at (4,6.5)  {$\mathbb{H}^3$};
                % Dibujar el romboide coloreado de gris
                %\draw (-5,-1) -- (3,-1) -- (5,1) -- (-3,1) -- cycle; %base
                \draw[thick] (-5,-1)--(3,-1);%base
                \draw[thick] (3,-1)--(5,1);%base
                \draw[thick] (5,1)--(3.5,1);
                \draw[dashed](3.5,1)--(-3,1);%base
                \draw[dashed](-3,1)--(-3.5,0.5);%base
                \draw[thick](-3.5,0.5)--(-5,-1);%base
                %	\filldraw[black] (0,0) circle (2pt); % center point
                %\draw[thick] (0,0) -- (0,6);
                \node[left] at (-0.1,5.7)  {$x_3$};
                \node[centered] at (-0,6.7)  {$-\infty$};
                \node[right] at (3.2,-1.3)  {$\{x_3=0\}$};
            
                %\filldraw[black] (4.4,0.9) circle (2pt); % center point
                
                % Horoesferas
                \draw (-5,-1+4) -- (3,-1+4) -- (5,1+4) -- (-3,1+4) -- cycle;
                \fill[thick, fill=darkgreen,  fill opacity=0.7] (-5,-1+4) -- (3,-1+4) -- (5,1+4) -- (-3,1+4) -- cycle;
                \node[right] at (4,4)  { {\tiny$\approx\mathbb{R}^{2}$}};
                \draw (-5,-1+1.5) -- (3,-1+1.5) -- (5,1+1.5) -- (-3,1+1.5) -- cycle;
                \fill[thick, fill=darkgreen,  fill opacity=0.5] (-5,-1+1.5) -- (3,-1+1.5) -- (5,1+1.5) -- (-3,1+1.5) -- cycle;
                \node[right] at (4,1.5)  { {\tiny$\approx\mathbb{R}^{2}$}};
                    %Geodesicas verticales
                %	\draw[thick] (-2.3,-0.3) -- (-2.3,6-0.3);
                \draw[thick,->](-2.3,6-0.3)--(-2.3,4-0.3);
                \draw[dashed] (-2.3,4-0.3) -- (-2.3,3);
                \draw[thick,->](-2.3,3) -- (-2.3,1.5);
                \draw[dashed] (-2.3,4-1.5) -- (-2.3,0.5);
                \draw[thick,->](-2.3,0.5) -- (-2.3,-0.3);
                %\filldraw[black] (-2.5,0.4) circle (2pt);
                %---
                %\draw[thick] (0,0) -- (0,6);
                \draw[thick,->](0,6)--(0,4);
                \draw[dashed] (0,4) -- (0,3);
                \draw[thick,->](0,3) -- (0,1.8);
                \draw[dashed] (0,4-1.8) -- (0,0.5);
                \draw[thick,->](0,0.5) -- (0,0);
                %\draw[thick] (2.5,-0.7) -- (2.5,6-0.7);
                \draw[thick,->](2.5,6-0.7)--(2.5,4-0.7);
                \draw[dashed] (2.5,4-0.7) -- (2.5,3);
                \draw[thick,->](2.5,3) -- (2.5,1.8-0.7);
                \draw[dashed] (2.5,4-1.8-0.7) -- (2.5,0.8-0.5);
                \draw[thick,->](2.5,0.5) -- (2.5,-0.7);
            \end{tikzpicture}
        }
        \end{center}
        \caption{$\mathbb{D}^3$ and $\mathbb{U}^3$ as warped products induced by parabolic coordinates. }
    \label{FigureParabolic}
\end{figure}

In such coordinates $(\rho,\theta)=(\rho_i,\theta_i)$,  with $i\in\{\mathfrak{e},\mathfrak{h},\mathfrak{p}\}$, $(\Hd, g)$ is  given by the warped product
\begin{equation*}
    \Hd=I_i\times \mathcal{N}_i,\quad g={\rm d}\rho^2+(\psi^i(\rho))^2g_{\mathcal{N}_i},
\end{equation*}
where $g_{\mathcal{N}_i}$ is the metric on the Riemannian manifold $\mathcal{N}_i$, which is independent of $\rho$. 
By \eqref{Laplace-WarpedProduct}, the operator $\lh$  splits as
\begin{equation}\label{LaplaceBeltramiRhoTheta}
    \lh u= u_{\rho\rho}+(d-1)h_{1}^i u_{\rho}+h^{i}_2\Delta_{\mathcal N_i} u,\quad(\rho,\theta)\in I_i\times\mathcal N_i,
\end{equation}
where 
\begin{center}
\begin{tabular}{|l|c|c|c|c|c|c|}
	\hline&&&&&&\\
	Type&$i$&$\psi^{i}(\rho)$&$h^{i}_{1}(\rho)$&$h^{i}_{2}(\rho)$&$I_{i}$&$\mathcal{N}_i$\\&&&&&&\\\hline\hline&&&&&&\\
	Elliptic&$\mathfrak{e}$&$\sinh(\rho)$&$\coth{\rho}$&$\sinh^{-2}\rho$&$\mathbb{R}_+$&$\mathbb{S}^{d-1}$\\ &&&&&&\\\hline	&&&&&&\\
	Hyperbolic&$\mathfrak{h}$&$\cosh(\rho)$&$\tanh\rho$&$\cosh^{-2}\rho$&$\mathbb{R}$&$\mathbb{H}^{d-1}$\\&&&&&&\\\hline	&&&&&&\\
	Parabolic&$\mathfrak{p}$&$e^{\rho}$&$1$&$e^{-2\rho}$&$\mathbb{R}$&$\mathbb{R}^{d-1}$\\	&&&&&&\\\hline
\end{tabular}
\end{center}
We remark that $h_1^i(\rho_0)$ is the mean curvature of the  hypersurface $\mathcal{N}_i(\rho_0):=\{(\rho,\theta)\in I_i\times\mathcal{N}_i:\rho=\rho_0\}$ (embedded in $(\Hd, g)$) for all $\rho_0\in I$ and for all $i\in\{\mathfrak{e},\mathfrak{h},\mathfrak{p}\}$. Moreover, observe that
\begin{equation}\label{limit_h}\quad
    \lim\limits_{\rho\to\infty}h^{i}_1(\rho)=\lim\limits_{\rho\to\infty}\frac{(\psi^{i})'}{\psi^{i}}=1 \quad\textrm{for all }i\in\{\mathfrak{e},\mathfrak{h},\mathfrak{p}\}.
\end{equation}
The above limit  introduces an important difference with respect to the Euclidean case: a drift of constant velocity $(d-1)$ that is slowing down the propagation. 

Observe that the drift from infinity phenomenon does not happen in the Euclidean space.  More precisely, consider  the two  warped product decompositions for   $\mathbb{R}^d$:
\begin{center}
    \begin{tabular}{lll}
        $(\rho_\mathfrak{e},\theta_\mathfrak{e})=(|x|,x/|x|)\in\mathbb{R}_+\times \mathbb{S}^{d-1}$,& $g_{\mathbb{R}^d}=\textup{d}\rho^{2}+(\psi_{\mathbb{R}^d}^{\mathfrak{e}})^{2}g_{\mathbb{S}^{d-1}}$& with  $\psi_{\mathbb{R}^d}^{\mathfrak{e}}:=\rho$,\\[8pt]
        $(\rho_\mathfrak{p},\theta_\mathfrak{p})=(x_d,(x_1,\dots,x_{d-1}))\in\mathbb{R}\times \mathbb{R}^{d-1}$,&$g_{\mathbb{R}^d}=\textup{d}\rho^{2}+(\psi_{\mathbb{R}^d}^{\mathfrak{p}})^{2}g_{\mathbb{R}^{d-1}}$& with $\psi_{\mathbb{R}^d}^{\mathfrak{p}}:=1$.
    \end{tabular}
\end{center}
Here, we use the notation $\mathfrak e$ (resp.~$\mathfrak p$), since we are considering isometries that preserve $\mathbb S^{d-1}$ (resp.~$\mathbb R^{d-1}$). Notice that, in contrast to \eqref{limit_h}, 
\begin{equation*}
    \lim\limits_{\rho\to\infty}\frac{(\psi^{i}_{\mathbb{R}^{d}})'}{\psi^{i}_{\mathbb{R}^{d}}}=0\quad\textrm{for }i=\mathfrak p,\mathfrak e.
\end{equation*}

%%%%%%%%%%%%%%%%%%%%%%%%%%%%%%%%%%%%%%%%%%%%%%%%%%%%%%%%%%%%%%%%%%%%%%%%%
\section{Analytic preliminaries.}\label{section:analytic-preliminaries}
\setcounter{equation}{0}

We gather here some preliminary results concerning the heat kernel and the existence and uniqueness of solutions to~\eqref{KPPproblem}.

\medskip

\noindent{\bf Heat kernel.} Let $v_{0}:\Hd\to\mathbb{R}$ be a bounded function. The unique bounded solution to
\begin{equation}\label{HeatEquationHd}	
    v_{t}=\lh v\quad\textrm{in }\Hd\times\mathbb{R}_+
\end{equation}
with initial datum $v_0$ is given by
\begin{equation}\label{solution-to-HE-hyperbolic}	
    v(x,t)=\int_{\Hd}G(x,y,t)v_0(y)\,{\rm d}\mu (y),
\end{equation}
where $\textup{d}\mu$ denotes the volume element of $\hs$ and $G:\Hd\times\Hd\times\mathbb{R}_+\to\mathbb{R}_+$ is a function known as the~\emph{heat kernel} of the hyperbolic heat equation. For each $y\in\mathbb{H}^d$, $G(\cdot,y,\cdot\cdot)$ satisfies~\eqref{HeatEquationHd}, and has~$\delta_y$ as initial trace (in the sense of Radon measures). Moreover, 	
\begin{equation}\label{GnormaL1}
    \|G(\cdot,y,t)\|_{L^{1}(\Hd)}=1\quad\textrm{for all }y\in\Hd,\ t>0.
\end{equation}

An explicit formula for the kernel is available,
\begin{equation}\label{P and G- Hkernel-hyperbolic}	
    G(x,y,t)=P_{t}(d_{\mathbb H^d}(x,y)),\quad x,y\in\Hd,\ t>0,
\end{equation}
with profiles $P_t$ given,  for all $r\ge0$ and $t>0$, by 
\begin{equation*}
        P_{t}(r)=
        \begin{cases}
    		\displaystyle\frac{(-1)^{m}}{2^n\pi^n (4\pi t)^{1/2}} \left(\frac{1}{\sinh r}\frac{\partial}{\partial r}\right)^n e^{-n^2 t-\frac{r^2}{4t}}&\textup{if }d=2n+1,\\
            \displaystyle\frac{(-1)^n}{2^{n+5/2}\pi^{n+3/2}t^{3/2}}e^{-\frac{(2n+1)^2t}{4}} \left(\frac{1}{\sinh r}\frac{\partial}{\partial r}\right)^n \int_{r}^{\infty}\frac{se^{-\frac{s^2}{4t}}}{\left(\cosh s-\cosh r\right)^{1/2}} \,{\rm d}s&\textup{if }d=2n,
    	\end{cases}
\end{equation*}     
with $n\in\mathbb{N}$; see~\cite{Grigoryan}. Each profile $P_t$, $t>0$, is a decreasing function and satisfies
\begin{equation}\label{HeatKernelControl}
    c_d^{-1}h_t(r)\leq P_t(r)\leq c_d h_{t}(r), \quad r\geq 0,
\end{equation}
for some constant $c_{d}>1$, where
\begin{equation}\label{HeatKernelApprox-h_t}
    h_t(r)=\frac{(1+r)}{(4\pi t)^{d/2}}(1+r+t)^{\frac{d-3}{2}}e^{-\lambda_{1}t-\frac{d-1}{2}r-\frac{r^{2}}{4t}}, \quad r\geq 0;
\end{equation}
see~\cite[Theorem 5.7.2]{Davies}. Using~\eqref{lambda1}, we can rewrite $h_{t}$ as
\begin{equation}\label{HeatKernelApprox} 
    h_{t}(r)=\frac{1}{(4\pi)^{d/2}}e^{\frac{-(r+(d-1)t)^{2}}{4t}}H_{t}(r),\quad\textrm{where }H_{t}(r)=\frac{(1+r+t)^{\frac{d-3}{2}}(1+r)}{t^{d/2}},\quad r\geq 0,\  t>0,
\end{equation}
an expression that will be useful later.

\medskip

\noindent{\bf Existence and uniqueness of solutions to~\eqref{KPPproblem}.} Uniqueness is a consequence of the following comparison result, which can be found within the proof of~\cite[Theorem 2.6]{UniquenessInHd}.

\begin{theorem}[Comparison principle in $\Hd$ for bounded functions]\label{ComparsionPrincipleHd}  
    Consider $T>0$ and $g:\mathbb{R}\to\mathbb{R}$ a locally Lipschitz function. Let $\underline{u}$ and $\overline{u}$ be two bounded $C^{2,1}_{x,t}(\Hd\times (0,T))$-functions such that
	\begin{equation*}
		\underline{u}_{t}-\Delta_{\Hd}\underline{u}- g(\underline{u})\leq 0,\qquad\overline{u}_{t}-\Delta_{\Hd}\overline{u} -g(\overline{u})\geq 0\quad\textrm{in }\Hd\times (0,T).
	\end{equation*}
	If $\underline{u}(x, 0)\leq \overline{u}(x, 0)$  for all $x\in\Hd$, then $\underline{u}\leq \overline{u}$ in $\Hd\times (0,T)$.
\end{theorem}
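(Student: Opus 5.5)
The plan is the standard maximum-principle argument for bounded solutions on a complete manifold with Ricci curvature bounded below. Set $w:=\underline{u}-\overline{u}$. Since $g$ is locally Lipschitz and both functions are bounded, say by $M$, there is $L>0$ (the Lipschitz constant of $g$ on $[-M,M]$) such that
\[
w_t-\lh w\le c(x,t)\,w,\qquad c(x,t):=\frac{g(\underline u)-g(\overline u)}{\underline u-\overline u}\ \ (\text{$0$ where } \underline u=\overline u),\qquad |c|\le L .
\]
Replacing $w$ by $e^{-(L+1)t}w$ gives a bounded function satisfying $w_t-\lh w+\tilde c\,w\le 0$ with $\tilde c\ge1$ wherever $w>0$. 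It then suffices to show that $w^+\equiv0$ when $w(\cdot,0)\le0$.

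The key step is to build a barrier that controls the behaviour at infinity, since $\Hd$ is noncompact and $w$ is only known to be bounded there. I would take the elliptic distance $\rho=\rho_{\mathfrak e}(x)$ to a fixed point and the function
\[
\Psi(x):=\cosh\rho(x)=\cosh\rho_{\mathfrak e}(x).
\]
By \eqref{Laplacian-introduction},
\[
\lh\Psi=\cosh\rho+(d-1)\coth\rho\,\sinh\rho=d\cosh\rho=d\,\Psi .
\]
So $\Psi$ is smooth, satisfies $\Psi\ge1$, tends to $\infty$ at infinity, and $\lh\Psi=d\Psi$. For $\varepsilon>0$ I would consider
\[
z:=w-\varepsilon\,e^{(d+1)t}\Psi .
\]
Then $z_t-\lh z+\tilde c z\le -\varepsilon e^{(d+1)t}\Psi\bigl((d+1)-d+\tilde c\bigr)<0$ at points where $w>0$; at points where $w\le0$ we already have $z<0$. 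Because $w$ is bounded and $\Psi\to\infty$, $z<0$ outside a compact set, uniformly in $t\in[0,T']$ for each $T'<T$.

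Suppose $z$ attains a positive value in $\Hd\times[0,T']$. Then it has a positive maximum at some $(x_0,t_0)$ with $t_0>0$, since $z(\cdot,0)<0$. At that point $z_t\ge0$, $\lh z\le0$, $w>0$ and $\tilde c z>0$. This contradicts the strict inequality above. Hence $w\le\varepsilon e^{(d+1)t}\Psi$ for every $\varepsilon>0$; letting $\varepsilon\to0$ gives $w\le0$, i.e.\ $\underline u\le\overline u$.

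The only delicate points are regularity up to $t=0$ and the identification of the maximum. The regularity issue I would handle by working on $[\tau,T']$ and using continuity of the initial data in the sense assumed, i.e.\ $w(\cdot,\tau)^+\to0$ locally uniformly. I regard this as the main obstacle, because the statement only assumes $C^{2,1}$ regularity in the open cylinder. If needed, I would instead phrase the argument with $\limsup_{t\to0}$ of $z$ on compact sets. The barrier itself is explicit, thanks to the constant-curvature structure.
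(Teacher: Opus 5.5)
Your argument is correct. The paper gives no proof of this statement to compare with: it only says the result can be found within the proof of Theorem~2.6 of the cited uniqueness paper. Your proposal is therefore a self-contained proof where the paper relies on an external reference.

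The key computations check out:
\begin{itemize}
\item For $W=e^{-(L+1)t}(\underline u-\overline u)$ you get $W_t-\lh W+\tilde c\,W\le 0$ with $\tilde c=L+1-c\ge 1$ everywhere, not just where $W>0$.
\item $\Psi=\cosh\rho_{\mathfrak e}=(1+|x|^2)/(1-|x|^2)$ is smooth, including at the origin, and satisfies $\lh\Psi=d\Psi$.
\item Hence $z=W-\varepsilon e^{(d+1)t}\Psi$ satisfies $z_t-\lh z+\tilde c z\le-\varepsilon(1+\tilde c)e^{(d+1)t}\Psi<0$, which rules out a positive maximum.
\item Boundedness of $W$ together with $\Psi\to\infty$ confines any positive maximum to a compact set, uniformly in time.
\end{itemize}

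What your route buys is transparency. It needs only a smooth exhaustion function with $\lh\Psi\le C\Psi$, which the constant-curvature structure provides explicitly.

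Its one requirement, which you correctly flag, is that the initial inequality be attained locally uniformly, i.e.\ $\underline u,\overline u\in C(\Hd\times[0,T))$. Some such assumption must be read into the statement anyway. Otherwise take $g\equiv0$, $\overline u\equiv0$, $\underline u(\cdot,0)=0$ and $\underline u\equiv1$ for $t>0$: this satisfies every hypothesis and violates the conclusion.

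There is a scope limitation to keep in mind. The paper applies the theorem with merely measurable initial data. Examples are the a priori bound $0\le u\le 1$ for measurable $u_0$, the invariance corollary, and the comparison against $\mathds{1}_{(-\infty,R_0)}(\rho_{\mathfrak p})$ in the extinction result. There the initial datum is attained only in an $L^1_{\rm loc}$ sense. For those uses your pointwise argument would need a supplement. One option is to regularize both data in an order-preserving way, e.g.\ by $e^{\epsilon\lh}$, and pass to the limit through the Duhamel formula. That limit works because $|u^n-u|(x,t)\le e^{Lt}\big(e^{t\lh}|u^n_0-u_0|\big)(x)\to0$ pointwise.
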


The existence of a local in time solution follows easily from a standard fixed point argument for the integral operator $\mathcal{T}_{u_0}$ defined by
\begin{equation*}\label{KPPIntegralEq}	
   \mathcal{T}_{u_0} u(x,t):=\int_{\Hd}G(x,y, t)u_0(y)\,{\rm d}\mu (y)+\int_{0}^{t}\int_{\Hd}G(x,y, t-s)f(u(y,s))\,{\rm d}\mu (y)\,{\rm d}s,
\end{equation*}
using the regularity of $G$ and $f$. The~\emph{a priori} estimates $0\le u\le 1$, that are an immediate consequence of the comparison principle, allow then to prove that the local in time solution is in fact global. See~\cite[Theorem 1]{KPP-article} for an analogous reasoning in the case of the Euclidean space. We thus have the following result.

\begin{proposition}
	Let $u_0:\Hd\to[0,1]$ measurable. There exists a unique classical bounded solution $u$ to~\eqref{KPPproblem} with initial datum $u_0$.  Moreover, $u\in C^\infty(\Hd\times\mathbb{R}_+)$ and $0\leq u\leq 1$ in $\Hd\times\mathbb{R}_+$.
\end{proposition}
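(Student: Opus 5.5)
The plan follows the standard route: a Duhamel fixed-point argument gives a local solution, a priori bounds from comparison make it global, and parabolic regularity gives smoothness.

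\emph{Local existence.} Fix $T>0$ small and work in $X_T:=L^\infty(\Hd\times(0,T))$. Extend $f$ to a globally Lipschitz function on $\mathbb{R}$ (e.g. $f(u)=0$ outside $[0,1]$), with Lipschitz constant $L$. By \eqref{GnormaL1} and $G\ge0$, the operator $\mathcal{T}_{u_0}$ maps $X_T$ into itself. It satisfies $\|\mathcal{T}_{u_0}u-\mathcal{T}_{u_0}w\|_{X_T}\le LT\|u-w\|_{X_T}$. So for $T<1/L$ it is a contraction, and we obtain a unique mild solution on $(0,T)$. Since $T$ depends only on $L$ and not on $u_0$, iterating on $[T,2T],[2T,3T],\dots$ gives a global mild solution, once boundedness is secured at each step.

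\emph{Regularity.} Next we show that the mild solution is classical and smooth. The first term $\int G u_0\,{\rm d}\mu$ is a bounded solution of \eqref{HeatEquationHd}, smooth for $t>0$ by smoothness of $P_t$. The Duhamel term with bounded integrand $f(u)$ is $C^{1+\alpha,(1+\alpha)/2}$ locally, by standard kernel estimates. Hence $u$ is locally Hölder, so $f(u)$ is Hölder (since $f\in C^2$), and local parabolic Schauder theory in coordinate charts gives $u\in C^{2+\alpha,1+\alpha/2}_{\rm loc}$. The regularity gain is limited by that of $f$. Bootstrapping then gives $C^{2,1}$ classical regularity, and $C^\infty$ if $f$ is smooth. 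Since $f$ is only $C^2$, I would expect the regularity claim to be read as far as the bootstrap allows.

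\emph{Bounds and uniqueness.} The constants $0$ and $1$ are solutions since $f(0)=f(1)=0$, and $0\le u_0\le1$. Theorem~\ref{ComparsionPrincipleHd}, applied with $g$ the extended $f$, yields $0\le u\le1$; on this range the extension coincides with $f$. The same theorem applied to two bounded classical solutions in both directions gives uniqueness among bounded classical solutions.

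\emph{Main obstacle.} The delicate point is the initial trace for merely measurable $u_0$. A classical solution is only attained in the sense $u(\cdot,t)\to u_0$ a.e. or weakly, not pointwise. The comparison theorem must then be used with initial data understood in this weak sense, for instance by applying it on $[\varepsilon,T)$ and letting $\varepsilon\to0$ using the Duhamel representation. I would handle this by arguing uniqueness directly at the mild level. Any bounded classical solution coincides with $\mathcal{T}_{u_0}u$, by multiplying by $G$ and integrating on $\Hd\times(\varepsilon,t)$, which is justified by the Gaussian decay of $h_t$ in \eqref{HeatKernelApprox-h_t}. Uniqueness then follows from the contraction estimate.
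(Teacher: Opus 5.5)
Your proposal is correct and follows the same route as the paper: a fixed point of $\mathcal{T}_{u_0}$ for local existence, the comparison principle (Theorem~\ref{ComparsionPrincipleHd}) for the a priori bounds $0\le u\le 1$ and for uniqueness, and extension of the local solution to a global one. Two of your additions go beyond the paper's brief sketch and are correct: the treatment of the initial trace for merely measurable $u_0$ (comparing on $[\varepsilon,T)$ and passing to the limit through the Duhamel formula), and the caveat that $f\in C^2$ only gives finite regularity, so that $u\in C^\infty$ genuinely requires a smoother $f$ (spatially homogeneous solutions $u'=f(u)$ already show this).
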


Since the Laplace-Beltrami operator $\Delta_{\Hd}$ is invariant under the action of the isometries of the hyperbolic space, the comparison principle, Theorem~\ref{ComparsionPrincipleHd}, implies that if the initial datum has some symmetry, the same is true for the solution at any later time.

\begin{corollary}[Invariance by isometries]\label{CorInvariantIsometries}
	Let $u_0:\mathbb{H}^d\to[0,1]$ measurable and invariant under the action of a group of isometries of the hyperbolic space. Let $u$ be the solution to~\eqref{KPPproblem} with initial datum $u_0$. Then, for any $t>0$, $u(\cdot,t)$ is invariant under the action of the same group of isometries.
\end{corollary}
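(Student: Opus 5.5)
The plan is to derive the corollary directly from Theorem~\ref{ComparsionPrincipleHd}: the solution should inherit each symmetry of the datum because composing a solution with an isometry gives another solution.

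Fix the group $H$ of isometries under which $u_0$ is invariant, and fix $\sigma\in H$. Define $w(x,t):=u(\sigma(x),t)$.

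First I will check that $w$ solves the same problem. Isometries commute with the Laplace-Beltrami operator, since $\Delta_{\Hd}$ is expressed intrinsically through the metric: $\Delta_{\Hd}(v\circ\sigma)=(\Delta_{\Hd}v)\circ\sigma$ for smooth $v$. Hence $w$ is a classical solution of $w_t=\Delta_{\Hd}w+f(w)$. It is smooth and satisfies $0\le w\le 1$, with $w(\cdot,0)=u_0\circ\sigma=u_0$.

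Next I will compare $w$ with $u$. I apply Theorem~\ref{ComparsionPrincipleHd} on $\Hd\times(0,T)$ for arbitrary $T>0$, with $g=f$. Here $f$ is $C^2$ on $[0,1]$, and we extend it to a locally Lipschitz function on $\mathbb{R}$; this is harmless because both functions take values in $[0,1]$. Taking $\underline u=w$, $\overline u=u$ gives $w\le u$, and swapping the roles gives $u\le w$. Equivalently, one can invoke the uniqueness in the Proposition directly. Thus $u(\sigma(x),t)=u(x,t)$ for all $x$, $t$ and every $\sigma\in H$.

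The one delicate point is regularity at $t=0$, since $u_0$ is only measurable. The comparison principle as stated compares values at time $0$, so I will use the uniqueness statement of the Proposition. It applies because $w$ is also a bounded classical solution, and it attains the datum $u_0\circ\sigma$ in the same sense as $u$: the mild formulation with the kernel $G$ is invariant, because $G(\sigma x,\sigma y,t)=G(x,y,t)$ by~\eqref{P and G- Hkernel-hyperbolic} and $\sigma$ preserves ${\rm d}\mu$. Concretely, I will do a change of variables $y\mapsto\sigma y$ in $\mathcal{T}_{u_0}$. This shows that $\mathcal{T}_{u_0}(u\circ\sigma)=(\mathcal{T}_{u_0}u)\circ\sigma$. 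The fixed point is unique, so $u\circ\sigma=u$.
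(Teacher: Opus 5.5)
Your argument is correct and is essentially the paper's. The paper notes that $\Delta_{\Hd}$ commutes with isometries, so $u\circ\sigma$ solves the same problem with the same datum, and then invokes the comparison principle (Theorem~\ref{ComparsionPrincipleHd}) to force $u\circ\sigma=u$. Your extra Duhamel step makes this rigorous for merely measurable data, a point the paper passes over: it uses $G(\sigma x,\sigma y,t)=G(x,y,t)$, the invariance of ${\rm d}\mu$, and $u_0\circ\sigma^{-1}=u_0$ to show that $u\circ\sigma$ is again a fixed point of $\mathcal{T}_{u_0}$.
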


%%%%%%%%%%%%%%%%%%%%%%%%%%%%%%%%%%%%%%%%%%%%%%%%%%%%%%%%%%%%%%%%%%%%%%%%%%%%%%%%%%%%%
\section{Propagation versus extinction}\label{section:vs}
\setcounter{equation}{0}

In this section we obtain new propagation and extinction results, not contained in~\cite[Theorem~3.2]{Matano}. More precisely, we address these issues for more general data than the ones considered there, and we also study the critical case $f'(0)=\lambda_{1}$, with $\lambda_{1}$ given by~\eqref{lambda1}.

\noindent{\bf Extinction.} Let $u$ be a solution of~\eqref{KPPproblem} with a measurable initial datum $u_0:\Hd\to[0,1]$. Since~$f$ is a KPP function,  satisfying therefore~\eqref{KPPfunction.2},  by the comparison principle, Theorem~\ref{ComparsionPrincipleHd}, $u\leq \tilde{u}$ in $\Hd\times\mathbb{R}_+$, where $\tilde{u}$ is the unique bounded solution to the linear problem
\begin{equation*}
    \tilde{u}_{t}=\Delta_{\Hd}\tilde{u}+f'(0)\tilde{u}\quad\textrm{in }\Hd\times\mathbb{R}_{+},\qquad \tilde{u}(\cdot,0)=u_0\quad\textup{in }\Hd.
\end{equation*}
The function $v:=e^{-f'(0)t}\tilde{u}$ satisfies the heat equation~\eqref{HeatEquationHd}, and is therefore given by~\eqref{solution-to-HE-hyperbolic} with initial datum~$v_0=\tilde u_0=u_0$. Hence, 
\begin{equation*}
    \tilde{u}(x,t)=e^{f'(0)t}\int_{\Hd}G_{t}(x,y)u_{0}(y)\,{\rm d}\mu(y)\quad\textrm{for all }(x,t)\in\Hd\times\mathbb{R}_+.
\end{equation*}
Using also~\eqref{P and G- Hkernel-hyperbolic}, we finally arrive to the bound  
\begin{equation}\label{LineralizationEq} 
    u(x,t)\leq e^{f'(0)t}\int_{\Hd}P_{t}(d_{\mathbb H^d}(x,y))u_{0}(y)\,{\rm d}\mu(y)\quad\textrm{for all } (x,t)\in\Hd\times\mathbb{R}_+,
\end{equation}
crucial in the proof of next theorem, which has our first extinction result as a corollary.

\begin{theorem}\label{PropExtincionL1}	
    Let $u_{0}:\Hd\to[0,1]$ in $ L^{p}(\Hd)$, $p\in[1,\infty)$. Let $u$ be the solution to~\eqref{KPPproblem}. Then
	\begin{equation}\label{extinctionDecayForL1}
        \|u(\cdot,t)\|_{L^{\infty}(\Hd)}=O\big(t^{-\frac{3}{2p}}e^{(f'(0)-\frac{\lambda_1}{p})t}\big)\quad\textrm{as }t\to\infty.
	\end{equation}    
\end{theorem}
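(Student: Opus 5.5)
Starting from the linearization bound \eqref{LineralizationEq}, we apply H\"older's inequality in $y$:
\begin{equation*}
    u(x,t)\le e^{f'(0)t}\,\|u_0\|_{L^p(\Hd)}\,\big\|P_t(d_{\mathbb H^d}(x,\cdot))\big\|_{L^{p'}(\Hd)},\qquad \tfrac1p+\tfrac1{p'}=1.
\end{equation*}
The $L^{p'}$ norm of the kernel does not depend on $x$, because the volume measure is invariant under isometries. We compute it in geodesic polar coordinates centered at $x$, where ${\rm d}\mu=\omega_{d-1}\sinh^{d-1}r\,{\rm d}r\,{\rm d}\theta$. So the whole matter reduces to the one-dimensional estimate
\begin{equation*}
    \int_0^\infty P_t(r)^{p'}\sinh^{d-1}r\,{\rm d}r=O\big(t^{-\frac{3p'}{2p}}e^{-\frac{\lambda_1 p'}{p}t}\big)\quad\text{as }t\to\infty,
\end{equation*}
since taking the $1/p'$ power then gives exactly the claimed factor $t^{-3/(2p)}e^{-\lambda_1 t/p}$. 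The case $p=1$ is simpler: $p'=\infty$, and we only need $\sup_r P_t(r)=P_t(0)\lesssim h_t(0)\sim t^{-3/2}e^{-\lambda_1 t}$ for large $t$, which follows from \eqref{HeatKernelControl}--\eqref{HeatKernelApprox-h_t}, since $(1+t)^{(d-3)/2}t^{-d/2}\sim t^{-3/2}$.

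For $1<p<\infty$ we use the upper bound $P_t\le c_d h_t$ and the form \eqref{HeatKernelApprox}. Write $q=p'$ and $\sinh^{d-1}r\le e^{(d-1)r}$. The integrand is then bounded by
\begin{equation*}
    H_t(r)^q\,\exp\!\Big(-q\tfrac{(r+(d-1)t)^2}{4t}+(d-1)r\Big).
\end{equation*}
Expanding the exponent gives $-q\lambda_1 t-(q-1)\tfrac{d-1}{2}r-\tfrac{q r^2}{4t}$. Since $q>1$, the drift term is a genuine exponential decay in $r$, uniformly in $t$. The factor $e^{-q\lambda_1 t}$ is to be compared with the target $e^{-\lambda_1 q t/p}=e^{-\lambda_1(q-1)t}$. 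So we in fact gain $e^{-\lambda_1 t}$, and this gain absorbs the polynomial factors coming from $H_t$. Indeed, for $r\lesssim t$ we have $H_t(r)\lesssim (1+r)t^{-3/2}$. For $r\gg t$ the Gaussian factor $e^{-qr^2/4t}$ together with $e^{-(q-1)(d-1)r/2}$ controls everything.

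The only delicate point is bookkeeping the powers of $t$ to get precisely $t^{-3q/(2p)}$ rather than something weaker. The rough bound above is actually stronger than needed in the exponential rate, since it gives $e^{-q\lambda_1 t}$ against the target $e^{-(q-1)\lambda_1 t}$. So I would just note that
\begin{equation*}
    \int_0^\infty H_t(r)^q e^{-(q-1)\frac{d-1}{2}r}\,{\rm d}r=O\big(t^{-3q/2}\big)\cdot O\big(t^{C}\big)
\end{equation*}
for some $C$, because of the factor $(1+r+t)^{q(d-3)/2}$. Any such polynomial loss is absorbed by the extra $e^{-\lambda_1 t}$. This leaves $O(t^{-3q/(2p)}e^{-\lambda_1 (q-1)t})$ with room to spare, and we conclude by H\"older's inequality as above. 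If $u_0\in L^p$ only (not $L^1$), nothing changes, since only $\|u_0\|_{L^p}$ enters the estimate.
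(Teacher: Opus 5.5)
\textbf{There is a gap for $p>2$, caused by an algebra error in your exponent.} Expanding $-q\frac{(r+(d-1)t)^2}{4t}+(d-1)r$ gives
\[
-q\lambda_1 t-(q-2)\tfrac{d-1}{2}\,r-\tfrac{q r^2}{4t},
\]
not $-(q-1)\frac{d-1}{2}r$. Each power of the kernel decays like $e^{-\frac{d-1}{2}r}$, while the volume grows like $e^{(d-1)r}$. A quick check shows your formula cannot be right: at $q=1$ it would give $\|P_t\|_{L^1}\lesssim \mathrm{poly}(t)\,e^{-\lambda_1 t}$, which contradicts \eqref{GnormaL1}. The mass of $P_t$ in fact sits near $r\approx (d-1)t$.

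Consequently, your claim that ``since $q>1$, the drift term is a genuine exponential decay in $r$'' holds only for $q>2$, i.e.\ $p<2$. For $p=2$ only the Gaussian factor remains. It still gives a polynomial bound, so that case survives. For $p>2$ ($1<q<2$), however, the integrand grows like $e^{(2-q)\frac{d-1}{2}r}$ until $r\sim t$. The integral $\int_0^\infty H_t(r)^q e^{(2-q)\frac{d-1}{2}r-\frac{qr^2}{4t}}\,{\rm d}r$ is then of size $e^{\frac{(2-q)^2}{q}\lambda_1 t}$ up to polynomial factors, not $O(t^{C})$. Your claimed gain of $e^{-\lambda_1 t}$ is therefore false in this range, and the step as written does not prove the estimate.

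\textbf{How to repair it.} Complete the square in $r$. The net exponent of $\|P_t\|_{L^q}^q$ is $-q\lambda_1 t+\frac{(2-q)^2}{q}\lambda_1 t=-\frac{4(q-1)}{q}\lambda_1 t$. This beats the target $-(q-1)\lambda_1 t$ because $4/q>1$, so polynomial losses are still absorbed. Repaired this way, your direct computation gives a strictly better exponential rate than the theorem for every $p>1$, which is a genuine benefit of your route.

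\textbf{The paper's route.} The paper avoids radial integration altogether. After H\"older, it interpolates
\[
\|P_t(d_{\mathbb H^d}(x,\cdot))\|_{L^{p'}}\le\|P_t(d_{\mathbb H^d}(x,\cdot))\|_{L^1}^{1/p'}\,\|P_t(d_{\mathbb H^d}(x,\cdot))\|_{L^\infty}^{1/p}.
\]
It then uses mass conservation, $\|P_t\|_{L^1}=1$, and the bound $\|P_t\|_{L^\infty}=P_t(0)\le Ct^{-3/2}e^{-\lambda_1 t}$ (the same bound you used for $p=1$). This yields exactly $t^{-\frac{3}{2p}}e^{-\frac{\lambda_1}{p}t}$, which is why the stated rate has that form.
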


\begin{proof}
Applying first Hölder's inequality ($p'$ denotes the conjugate exponent of $p$), then interpolation, and finally that $\|P_t(d_{\mathbb H^d}(x,\cdot))\|_{L^{1}(\Hd)}=1$ for all $x\in\Hd$ and $t>0$,  we get
\begin{equation}\label{eq:proof.thm.extinction}
    \begin{aligned}
        \int_{\Hd}P_{t}(d_{\mathbb H^d}(x,y))u_{0}(y)\,{\rm d}\mu(y)&\le\|P_{t}(d_{\mathbb H^d}(x,\cdot))\|_{L^{p'}(\Hd)}\|u_0\|_{L^{p}(\Hd)}\\
        &\leq \|P_{t}(d_{\mathbb H^d}(x,\cdot))\|_{L^1(\Hd)}^{\frac{1}{p'}}\|P_{t}(d_{\mathbb H^d}(x,\cdot))\|_{L^{\infty}(\Hd)}^{\frac{1}{p}}\|u_0\|_{L^{p}(\Hd)},\\
        &\leq\|P_{t}(d_{\mathbb H^d}(x,\cdot))\|_{L^{\infty}(\Hd)}^{\frac{1}{p}}\|u_0\|_{L^{p}(\Hd)},\quad x\in\Hd,\ t>0.
    \end{aligned}
\end{equation}
On the other hand, since $P_t$ is, for all $t>0$, a decreasing function of the geodesic distance, we obtain, using also~\eqref{HeatKernelControl} and~\eqref{HeatKernelApprox-h_t}, that there is a constant $C>0$ such that  
\begin{equation}\label{P_t at the origen}	
    \|P_t(d_{\mathbb H^d}(x,\cdot))\|_{L^{\infty}(\Hd)}\leq P_{t}(0)\leq Ct^{-3/2}e^{-\lambda_{1} t},\quad x\in\Hd,\ t>0.
\end{equation}
The combination of~\eqref{LineralizationEq},~\eqref{eq:proof.thm.extinction} and~\eqref{P_t at the origen} yields the result.
\end{proof}

As a corollary we obtain the following extinction result that includes information for the critical case $f'(0)=\lambda_{1}$.

\begin{corollary}
    Let $f'(0)\le \lambda_1$ and $u_{0}:\Hd\to[0,1]$ in $ L^{p}(\Hd)$, $p\in[1,\lambda_1/f'(0)]$. Then the solution~$u$ to~\eqref{KPPproblem} vanishes,  that is, $\lim\limits_{t\to \infty}\|u(\cdot, t)\|_{L^\infty(\Hd)}=0$.
\end{corollary}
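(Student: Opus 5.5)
This corollary should follow directly from Theorem~\ref{PropExtincionL1}. Take $p\in[1,\lambda_1/f'(0)]$; this interval is nonempty because $f'(0)\le\lambda_1$. Since $u_0\in L^p(\Hd)$, estimate~\eqref{extinctionDecayForL1} gives
\[
\|u(\cdot,t)\|_{L^\infty(\Hd)}\le C\,t^{-\frac{3}{2p}}e^{(f'(0)-\frac{\lambda_1}{p})t}\quad\text{for all } t\ge t_0,
\]
for some constant $C$ and some large time $t_0$. The condition $p\le\lambda_1/f'(0)$ is exactly $f'(0)-\lambda_1/p\le 0$, so the exponential factor is bounded by $1$ for every $t>0$.

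The right-hand side is then at most $C t^{-3/(2p)}$. This tends to $0$ as $t\to\infty$, because $p<\infty$ makes the exponent $3/(2p)$ strictly positive. We conclude $\lim_{t\to\infty}\|u(\cdot,t)\|_{L^\infty(\Hd)}=0$.

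The only delicate case is the endpoint $p=\lambda_1/f'(0)$, which includes the critical case $f'(0)=\lambda_1$ with $p=1$. There the exponential factor is identically $1$, and decay comes solely from the algebraic factor $t^{-3/(2p)}$. This factor originates in the bound $P_t(0)\le Ct^{-3/2}e^{-\lambda_1 t}$ from~\eqref{P_t at the origen}, so the polynomial correction to the heat kernel is precisely what makes the critical case work. When $p<\lambda_1/f'(0)$, the decay is exponential and there is nothing further to check.
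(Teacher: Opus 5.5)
Your proposal is correct and follows the paper's route. The paper states this corollary as an immediate consequence of Theorem~\ref{PropExtincionL1}: the condition $p\le\lambda_1/f'(0)$ makes the exponent $f'(0)-\lambda_1/p$ nonpositive, so the algebraic factor $t^{-3/(2p)}$ in \eqref{extinctionDecayForL1} alone forces the decay, including at the critical endpoint.
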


Our next result, based on a comparison argument with an explicit supersolution, shows that extinction may also occur for initial data that do not belong to any~$L^p$ space except $L^\infty$. 

\begin{proposition}\label{PropextinctionParabolic}
    Let  $f'(0)<\lambda_{1}$ and $u$ the solution to~\eqref{KPPproblem} with $0\le u_{0}(x)\leq \mathds{1}_{(-\infty,R_0)}(\rho_\mathfrak{p}(x))$ for some $R_0\geq 0$. Then, for all $\nu\in\mathbb{R}$,
    \begin{equation}\label{ExtinctionParabolic}	
        \lim\limits_{t\to\infty}\sup\limits_{\{x\in\Hd:\,\rho_\mathfrak{p}(x)\geq\nu\}}u(x,t)=0.
    \end{equation}
\end{proposition}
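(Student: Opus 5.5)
The plan is to dominate $u$ by an explicit solution of the linearized problem that depends only on the parabolic coordinate $\rho=\rho_\mathfrak{p}$. In $\mathfrak{p}$-coordinates the Laplace–Beltrami operator acts on functions of $\rho$ alone as $\partial_{\rho\rho}+(d-1)\partial_\rho$, by~\eqref{LaplaceBeltramiRhoTheta} with $h_1^{\mathfrak p}\equiv1$. For $\lambda>0$ the function $\varphi_\lambda(x):=e^{-\lambda(\rho_\mathfrak{p}(x)-R_0)}$ (that is, a multiple of $x_d^{\lambda}$ in $\uh$) therefore satisfies $\lh\varphi_\lambda=(\lambda^2-(d-1)\lambda)\varphi_\lambda$. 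Hence $w(x,t):=e^{\mu t}\varphi_\lambda(x)$ with
\[
\mu:=\lambda^2-(d-1)\lambda+f'(0)
\]
solves $w_t=\lh w+f'(0)w$. The best choice is $\lambda=(d-1)/2$, which gives $\mu=f'(0)-\lambda_1<0$ by~\eqref{lambda1}. Moreover $\varphi_\lambda\ge1$ where $\rho_\mathfrak{p}<R_0$, so $\varphi_\lambda\ge \mathds{1}_{(-\infty,R_0)}(\rho_\mathfrak{p})\ge u_0$.

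The comparison itself goes through the linearization bound~\eqref{LineralizationEq}. This gives
\[
u(x,t)\le e^{f'(0)t}\int_{\Hd}P_t(d_{\mathbb H^d}(x,y))u_0(y)\,{\rm d}\mu(y)\le e^{f'(0)t}\int_{\Hd}P_t(d_{\mathbb H^d}(x,y))\varphi_\lambda(y)\,{\rm d}\mu(y).
\]
The key claim is then that the heat semigroup acts on the (unbounded) generalized eigenfunction $\varphi_\lambda$ as $\int P_t(d_{\mathbb H^d}(x,y))\varphi_\lambda(y)\,{\rm d}\mu(y)=e^{(\lambda^2-(d-1)\lambda)t}\varphi_\lambda(x)$ for $\lambda\in(0,d-1)$. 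Granting this, we get $u(x,t)\le e^{(f'(0)-\lambda_1)t}e^{-\frac{d-1}{2}(\rho_\mathfrak{p}(x)-R_0)}$. Taking the supremum over $\{\rho_\mathfrak{p}\ge\nu\}$ yields a bound $e^{(f'(0)-\lambda_1)t}e^{-\frac{d-1}{2}(\nu-R_0)}\to0$, which is~\eqref{ExtinctionParabolic}.

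The main obstacle is justifying the eigenfunction identity, since Theorem~\ref{ComparsionPrincipleHd} and uniqueness of bounded solutions do not apply directly to the unbounded $\varphi_\lambda$. I would proceed in three steps.
\begin{enumerate}
\item Show that the integral converges, locally uniformly in $(x,t)$, using the kernel bounds~\eqref{HeatKernelControl}--\eqref{HeatKernelApprox}. In the half-space model $\varphi_\lambda\sim x_d^\lambda$, and the kernel decays like $e^{-\frac{d-1}{2}r}$ against volume growth $e^{(d-1)r}$. Horospherical averaging then shows that the growth $e^{\lambda r}$ of $\varphi_\lambda$ along geodesics going to the boundary is integrable precisely when $0<\lambda<d-1$, because the Gaussian factor $e^{-r^2/4t}$ kills any exponential.
\item Check that the resulting function $W(x,t)$ is a classical solution of the heat equation, depends only on $\rho_\mathfrak{p}$ by invariance under parabolic isometries (as in Corollary~\ref{CorInvariantIsometries}), and has initial trace $\varphi_\lambda$.
\item Conclude $W=e^{(\lambda^2-(d-1)\lambda)t}\varphi_\lambda$. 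One route is a one-dimensional uniqueness argument for $v_t=v_{\rho\rho}+(d-1)v_\rho$ in the class of functions with exponential growth, which reduces to the Euclidean heat equation after the change of variables $\rho\mapsto\rho+(d-1)t$ and for which Tychonoff-type uniqueness holds. Alternatively, one can bypass the identity by approximating $\varphi_\lambda$ from below by $\min(\varphi_\lambda,n)$ and using monotone convergence together with the one-dimensional explicit computation.
\end{enumerate}
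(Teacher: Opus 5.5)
Your argument is correct, but it takes a different route from the paper. The paper stays nonlinear and never leaves the class of bounded functions. Since $f'(0)<\lambda_1$ is equivalent to $d-1>c_0$, \cite{AronsonWeinberger} provides a nonincreasing $q$ on $[0,\infty)$ with $q''+(d-1)q'+f(q)=0$, $q(0)=1$, $q(\infty)=0$. The paper then compares $u$, through Theorem~\ref{ComparsionPrincipleHd}, with the solution of the drifted one-dimensional KPP equation $v_t=v_{\rho\rho}+(d-1)v_\rho+f(v)$ whose datum is $1$ on $(-\infty,R_0]$ glued to $q(\rho-R_0)$. The convergence $\sup_{\{\rho\ge\nu\}}v\to0$ is quoted from the proof of \cite[Theorem 5.1]{AronsonWeinberger}. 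The mechanism is that the drift $d-1$ outruns the minimal KPP speed $c_0$, and no rate comes out. You instead discard the nonlinearity via~\eqref{LineralizationEq} and dominate by the bottom-of-spectrum horospherical eigenfunction $x_d^{(d-1)/2}=e^{-\frac{d-1}{2}\rho_{\mathfrak p}}$. The price is an unbounded barrier. The gain is the explicit estimate $\sup_{\{\rho_{\mathfrak p}\ge\nu\}}u(\cdot,t)\le e^{(f'(0)-\lambda_1)t}e^{-\frac{d-1}{2}(\nu-R_0)}$, obtained from~\eqref{KPPfunction.2} alone, with no result on the nonlinear ODE.

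Some technical comments follow.

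First, the restriction $0<\lambda<d-1$ in your Step~1 is spurious, though harmless since you only use $\lambda=\frac{d-1}{2}$. Because $\rho_{\mathfrak p}$ is $1$-Lipschitz for $d_{\mathbb H^d}$, you have $\varphi_\lambda(y)\le e^{\lambda d_{\mathbb H^d}(x,y)}\varphi_\lambda(x)$. The factor $e^{-r^2/4t}$ in~\eqref{HeatKernelApprox-h_t} then makes the integral finite for every $\lambda$.

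Second, and more importantly, you only need the inequality $\int_{\Hd}P_t(d_{\mathbb H^d}(x,y))\varphi_\lambda(y)\,{\rm d}\mu(y)\le e^{-\lambda_1 t}\varphi_\lambda(x)$. Your truncation route gives it without Steps~2--3. The bounded heat flow of $\min(\varphi_\lambda,n)$ depends only on $\rho_{\mathfrak p}$ (as in Corollary~\ref{CorInvariantIsometries}). So in the variable $y=\rho_{\mathfrak p}+(d-1)t$ it is the Gaussian convolution of $\min(e^{-\lambda(y-R_0)},n)$, which is at most $e^{\lambda^2t}e^{-\lambda(y-R_0)}$. Then let $n\to\infty$.

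Third, you can do better by running this reduction directly on the datum $\mathds{1}_{(-\infty,R_0)}$, instead of first replacing it by $\varphi_\lambda$. Using $\int_a^\infty e^{-s^2/2}\,{\rm d}s\le a^{-1}e^{-a^2/2}$ with $a=(\nu-R_0+(d-1)t)/\sqrt{2t}$, you gain an extra factor $t^{-1/2}$. Thus your linear approach even gives vanishing on $\{\rho_{\mathfrak p}\ge\nu\}$ in the critical case $f'(0)=\lambda_1$. The paper's construction, which relies on $d-1>c_0$, does not reach that case.
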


\begin{proof}
Since $f'(0)<\lambda_{1}$, then $d-1>c_0$; see~\eqref{eq:equivalent.conditions}. Hence, there exists a non-increasing function $q:[0,\infty)\to [0,1]$ such that
\begin{equation*}
	q''+(d-1) q'+f(q)=0\quad\textrm{in }\mathbb{R}_{+},\qquad q(0)=1,\quad	\lim\limits_{s\to\infty}q(s)=0;
\end{equation*}
see~\cite[Proposition~4.1]{AronsonWeinberger}. Let $v$ be the solution to
\begin{equation*}
	v_{t}=v_{\rho\rho}+(d-1)v_{\rho}+f(v)\quad\textrm{in }\mathbb{R}\times\mathbb{R}_{+},\qquad v(\rho,0)=
    \begin{cases}
        1,&\rho\leq R_0,\\
        q(\rho-R_0),&\rho\geq R_0.
    \end{cases}
\end{equation*}
Since $u$ satisfies the same equation as $v$, see~\eqref{LaplaceBeltramiRhoTheta}, and $0\le u_0\le v(\cdot,0)$, the comparison principle, Theorem~\ref{ComparsionPrincipleHd},  implies that $0\leq u(x,t)\leq v(\rho_{\mathfrak{p}}(x),t)$ for all $(x,t)\in\Hd\times\mathbb{R}_{+}$.
But  
\[
    \lim\limits_{t\to\infty}\sup\limits_{\{\rho\geq\nu \}}v(\rho,t)=0\quad \textup{for all } \nu\in\mathbb{R},
\]
see the proof of \cite[Theorem 5.1]{AronsonWeinberger}, hence the result.
\end{proof}

\noindent\emph{Remark. } An analogous result in hyperbolic coordinates cannot be proved with the current method due to the behavior of the associated ODE.

\medskip

\noindent{\bf Propagation.}  As a consequence of Theorem~\ref{ThPropagation}, the comparison principle and the invariance of Laplace-Beltrami's operator $\Delta_{\Hd}$ under $i$-isometries, $i\in\{\mathfrak{e},\mathfrak{h},\mathfrak{p}\}$, we get a propagation result that in the case of initial data with hyperbolic or parabolic symmetry is stronger than \cite[Theorem 3.6]{Matano}, since it shows uniform convergence to 1 in sets that for such data are not compact.

\begin{proposition}\label{prop:convergence.to.1.compact}
    Let $f'(0)>\lambda_{1}$ and  $u_0: \Hd\to[0,1]$ measurable such that 
    \begin{equation*}
    	u_0(x)\geq v_0(\rho_i(x))\quad\textrm{for all }x\in\Hd
    \end{equation*}
    for some $i\in\{\mathfrak{e},\mathfrak{h},\mathfrak{p}\}$ and some $v_0:I_i\to[0,1]$, $v_0\not \equiv 0$. Let $u$ be the solution to~\eqref{KPPproblem}. Then, 
    \begin{equation}\label{propagation1}	
        \lim\limits_{t\to\infty}\inf\limits_{\{x\in\Hd:\, |\rho_i(x)|\leq \delta\}}u(x,t)=1\quad\textup{for all }\delta>0.
    \end{equation}
\end{proposition}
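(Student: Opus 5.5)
The plan is to reduce to an initial datum that is exactly invariant under the subgroup $H_i$, and then use that invariance to turn the non-compact set $\{|\rho_i|\le\delta\}$ into a compact one, so that Theorem~\ref{ThPropagation}(ii) applies.

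\emph{Comparison step.} Set $w_0(x):=v_0(\rho_i(x))$. Then $0\le w_0\le u_0\le 1$, and $w_0$ is measurable. Let $w$ be the solution to~\eqref{KPPproblem} with initial datum $w_0$. The comparison principle, Theorem~\ref{ComparsionPrincipleHd}, gives $u\ge w$ in $\Hd\times\mathbb{R}_+$. It therefore suffices to prove~\eqref{propagation1} for $w$ in place of $u$.

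\emph{Invariance and compact reduction.} By construction $w_0$ depends only on $\rho_i$, so it is invariant under $H_i$. Corollary~\ref{CorInvariantIsometries} then shows that $w(\cdot,t)$ is $H_i$-invariant for every $t>0$. In the warped coordinates $(\rho_i,\theta_i)\in I_i\times\mathcal N_i$, the $H_i$-orbits are the level sets $\{\rho_i=s\}$, and $H_i$ acts transitively on each of them. Hence $w(x,t)=\tilde w(\rho_i(x),t)$ for some function $\tilde w$.

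Fix any $\theta_0\in\mathcal N_i$ and define
\[
K_\delta:=\{(\rho,\theta_0):\ \rho\in I_i,\ |\rho|\le\delta\},
\]
which is a closed geodesic segment and hence a compact subset of $\Hd$. In the elliptic case one can instead take $K_\delta$ to be the closed geodesic ball of radius $\delta$ about $p$, which is already compact and coincides with $\{\rho_{\mathfrak e}\le\delta\}$. In every case,
\[
\inf_{\{x\in\Hd:\,|\rho_i(x)|\le\delta\}} w(x,t)=\inf_{K_\delta} w(\cdot,t).
\]

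\emph{Conclusion.} Since $f'(0)>\lambda_1$, Theorem~\ref{ThPropagation}(ii) applied to $w$ gives $\inf_{K_\delta}w(\cdot,t)\to1$ as $t\to\infty$. Combining this with $w\le u\le1$ yields~\eqref{propagation1}.

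\emph{Main obstacle.} The only delicate point is checking that the hypotheses of Theorem~\ref{ThPropagation}(ii) hold for $w_0$, namely that $w_0\not\equiv0$ in the measure-theoretic sense. Here $v_0\not\equiv0$ must be read as $v_0>0$ on a set of positive one-dimensional measure in $I_i$. Since the coordinate map $(\rho,\theta)\mapsto x$ is a diffeomorphism with positive density $(\psi^i)^{d-1}$, it follows that $w_0$ is positive on a set of positive hyperbolic measure.

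If Theorem~\ref{ThPropagation}(ii) needs $w_0$ to be nontrivial in a stronger sense, I would first run the flow for a short time $t_0>0$. By the strong maximum principle (using $G>0$ and $f\ge0$ in the Duhamel formula), $w(\cdot,t_0)>0$ everywhere. I would then restart the problem from $w(\cdot,t_0)$, which is continuous and nontrivial.
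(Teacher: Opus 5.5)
Your proposal is correct and follows the paper's argument: compare $u$ with the solution started from the symmetric datum $v_0(\rho_i(x))$, use $H_i$-invariance to replace the non-compact set $\{|\rho_i|\le\delta\}$ by a compact set that meets every level set $\{\rho_i=s\}$ with $|s|\le\delta$, and then apply Theorem~\ref{ThPropagation}(ii). The only difference is the compact set: the paper uses the closed geodesic ball of radius $\kappa=\max_{|s|\le\delta}\min_{\{\rho_i(x)=s\}}d_{\mathbb H^d}(x,0)$, while you use a geodesic segment transverse to the level sets, which is an inessential variation.
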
	

\begin{proof}
Let $\tilde{u}$ be the solution to~\eqref{KPPproblem} with initial datum $\tilde{u}(x,0)=v_0(\rho_i(x))$ for all $x\in\mathbb{H}^d$. It has the form $\tilde{u}(x,t)= v(\rho_i(x),t)$; see Corollary~\ref{CorInvariantIsometries}. Let 
\[
    \mu(s)=\min_{\{x\in\Hd:\, \rho_i(x)=s\}} d_{\mathbb H^d}(x,0),\quad s\in I_i,\qquad \kappa=\max_{\{s\in I_i:\, |s|\le \delta\}}\mu(s).
\]
Then, using the comparison principle and the symmetry of $\tilde u$, that implies that it is constant on each level set $\rho_i(x)=s$,
\begin{equation}\label{eq:propation.non.compact}
    \inf_{\{x\in\Hd:\, |\rho_i(x)|\leq \delta\}}u(x,t)\ge\inf_{\{x\in\Hd:\, |\rho_i(x)|\leq \delta\}}\tilde u(x,t)=\inf_{\{d_{\mathbb H^d}(x,0)\le \kappa\}}\tilde u(x,t).
\end{equation}
Since the set $\{d_{\mathbb H^d}(x,0)\le \kappa\}$ is compact, Theorem~\ref{ThPropagation} implies that the right-hand side of~\eqref{eq:propation.non.compact} goes to 1 as $t\to\infty$, and the result follows. 
\end{proof}

In view of Theorem \ref{ThSpreadingSpeedElliptic}, the spreading speed of solutions $u$ with a nontrivial, compactly supported initial datum  is $c_*$. However, the following result shows that to locate level sets with more precision a logarithmic correction is needed. 
\begin{proposition}\label{NecesityOfLogathElliptic}
    Let $f'(0)>\lambda_1$ and let $u$ be a solution  to~\eqref{KPPproblem} with a compactly supported initial datum $u_0:\Hd\to[0,1]$. Then, 
    \begin{equation}\label{NecesityLogElliptic}	
        \lim\limits_{t\to\infty}\sup\limits_{\{x\in\Hd:\;\rho_\mathfrak{e}(x)>c_*t-C\log t\}}u(x,t)=0\quad\textrm{ for all }0\leq C<1/c_0.
    \end{equation}
\end{proposition}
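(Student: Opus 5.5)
The plan is to bound $u$ from above by the linearized problem, using~\eqref{LineralizationEq}, and then estimate the heat kernel with the two-sided bound~\eqref{HeatKernelControl} written in the form~\eqref{HeatKernelApprox}.

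\textbf{Step 1: reduction to a kernel bound.} Let $\operatorname{supp}u_0\subset\{\rho_\mathfrak{e}\le R\}$. For $x$ with $r:=\rho_\mathfrak{e}(x)>R$ and $y\in\operatorname{supp}u_0$ we have $d_{\mathbb H^d}(x,y)\ge r-R$. Since $P_t$ is decreasing and $0\le u_0\le 1$, \eqref{LineralizationEq} gives
\[
u(x,t)\le |B_R|\,e^{f'(0)t}P_t(r-R)\le C\,e^{f'(0)t}\,e^{-\frac{(r-R+(d-1)t)^2}{4t}}\,H_t(r-R).
\]
So it suffices to show that $\sup_{s\ge s_0(t)}\Psi_t(s)\to0$, where $s=r-R$, $s_0(t):=c_*t-C\log t-R$, and
\[
\Psi_t(s):=e^{f'(0)t-\frac{(s+(d-1)t)^2}{4t}}H_t(s).
\]

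\textbf{Step 2: the sup is attained at the left endpoint.} For large $t$ we have $s_0(t)\ge c_*t/2\to\infty$, since $c_*>0$ by~\eqref{eq:equivalent.conditions}. On $s\ge s_0$:
\[
\frac{d}{ds}\log\Psi_t=-\frac{s+(d-1)t}{2t}+\frac{d-3}{2}\,\frac{1}{1+s+t}+\frac{1}{1+s}.
\]
The first term is $\le -c_*/4-(d-1)/2<0$ in this range, while the remaining two terms are $O(1/t)$. Hence $\Psi_t$ is decreasing on $[s_0(t),\infty)$ for $t$ large, and we only need to evaluate $\Psi_t(s_0(t))$.

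\textbf{Step 3: the endpoint computation.} This is the key step. Write $a:=C\log t+R$, so that $s_0+(d-1)t=c_0t-a$. Using $c_0=2\lambda_0$ and $c_0^2/4=f'(0)$,
\[
f'(0)t-\frac{(c_0t-a)^2}{4t}=\lambda_0a-\frac{a^2}{4t}\le \lambda_0 C\log t+\lambda_0R.
\]
Thus the exponential factor is at most $e^{\lambda_0R}\,t^{\lambda_0C}$. For the polynomial factor, $s_0\sim c_*t$ gives
\[
H_t(s_0)\asymp t^{\frac{d-3}{2}}\cdot t\cdot t^{-d/2}=t^{-1/2}.
\]
This holds for every $d\ge2$, including $d=2$ where the exponent $(d-3)/2$ is negative, since $1+s_0+t\asymp t$.

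\textbf{Conclusion.} Combining the steps,
\[
\sup_{\{\rho_\mathfrak{e}(x)>c_*t-C\log t\}}u(x,t)\le C'\,t^{\lambda_0C-\frac12}.
\]
This tends to $0$ exactly when $C<1/(2\lambda_0)=1/c_0$, which is the range in~\eqref{NecesityLogElliptic}.

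The only delicate points are the precise cancellation of the linear-in-$t$ terms in the exponent and the sharp power $t^{-1/2}$ coming from $H_t$. Both are explicit in~\eqref{HeatKernelApprox}, so I expect no real obstacle beyond careful bookkeeping of constants.
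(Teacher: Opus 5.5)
Your proposal is correct and follows essentially the paper's route: you bound $u$ by the linearized problem, use the monotonicity of $P_t$ to reduce to the left endpoint $s_0(t)=c_*t-C\log t-R$, and apply the kernel estimate to get $t^{-1/2}e^{\lambda_0 C\log t}$. The one difference is that your Step~2 proves monotonicity of the upper envelope $h_t$. This is correct but unnecessary: since $P_t$ itself is decreasing, you can bound $P_t(r-R)\le P_t(s_0(t))$ directly and only then apply \eqref{HeatKernelControl}, which is what the paper does.
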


\begin{proof} 
Since $u_0$ is compactly supported there exists $R_0>0$ such that $u_0(x)\leq\mathds{1}_{[0,R_0]}(\rho_\mathfrak{e}(x))$ for all $x\in\Hd$. Then, using inequality~\eqref{LineralizationEq} and passing to polar coordinates, we have that 
\begin{align*}
    u(x,t)&\leq e^{f'(0)t}\int_{\Hd}G_{t}(x,y)\mathds{1}_{[0,R_0]}(\rho_\mathfrak{e}(y))\,{\rm d}\mu(y)\\
    &=\omega_{d-1}e^{f'(0)t}\int_{0}^{R_0}P_{t}\big(d_{\mathbb H^d}((\rho(x),\theta(x)),(r,\xi)\big)\sinh^{d-1}r\,\textup{d}r
\end{align*}
for all $(x,t)\in \Hd\times\mathbb{R}_{+}$, where  $\omega_{d-1}$ is the surface measure of the unit sphere $\mathbb{S}^{d-1}$, due to the fact that $G_{t}(x,y)=P_{t}(d_{\mathbb H^d}(x,y))$ for all $x,y\in\Hd$.

On the other hand, let $g:\mathbb{R}_{+}\to\mathbb{R}_+$, $g=o(t^{1/2})$, to be determined at the end of the proof. If $r\geq c_*t-g(t)$, then $d_{\mathbb H^d}((\rho,\theta),(r,\xi))\geq c_*t-g(t)-R_0>0$ for all $r\in[0,R_0]$ and $\theta,\xi\in\mathbb{S}^{d-1}$. Therefore, since the heat kernel profile $P_t$ is decreasing in the geodesic distance variable, we have that
\begin{align*}
    \sup\limits_{\{x\in\Hd:\;\rho_\mathfrak{e}(x)>c_*t-C\log t\}}u(x,t)&\leq \omega_{d-1}e^{f'(0)t}P_{t}(c_*t-g(t)-R_0)\int_{0}^{R_0}\sinh^{d-1}r\,\textup{d}r\\
	&\leq C_{d,R_0}t^{-1/2}\exp\big(\lambda_{0}g(t)\big)\quad\textrm{for }t>0\textrm{ large enough},
\end{align*}
due to~\eqref{HeatKernelControl}, \eqref{HeatKernelApprox}, \eqref{c0andc*} and~\eqref{lambda_0}. Taking $g(t):=C\log t$ with $0\leq C<1/c_0$, we obtain~\eqref{NecesityLogElliptic}.
\end{proof}

\noindent\emph{Remark. } Our main result will show that Proposition~\ref{NecesityOfLogathElliptic} is still valid if $0\le C<3/c_0$.

%%%%%%%%%%%%%%%%%%%%%%%%%%%%%%%%%%%%%%%%%%%%%%%%%%%%%%%%%%%%%%%%%%%%%%%%%%%%%
%%%%%%%%%%%%%%%%%%%%%%%%%%%%%%%%%%%%%%%%%%%%%%%%%%%%%%%%%%%%%%%%%%%%%%%%%%%%%
\section{Traveling-wave behavior: Strategy of the proof of Theorem~\ref{ThConvergenceTravellingWave}}\label{section:strategy}
\setcounter{equation}{0}

This section is devoted to explaining the strategy of the proof of our main result, Theorem~\ref{ThConvergenceTravellingWave}. Throughout it we always assume, without further mention, that $f'(0)>\lambda_{1}$, so that there is propagation, and that the initial datum satisfies~\eqref{eq:symmetry.u0}--\eqref{eq:supported-from-right} for some $i\in\{\mathfrak{e},\mathfrak{h},\mathfrak{p}\}$.

\noindent \underline{1. $i$-invariance.} Since $u_{0}$ has the symmetry~\eqref{eq:symmetry.u0}, Corollary~\ref{CorInvariantIsometries} implies that $u(x,t)=\widetilde{u}(\rho_i(x),t)$ for all $(x,t)$ in $\Hd\times \mathbb{R}_{+}$, where the function $\widetilde{u}:I_{i}\times \mathbb{R}_+\to [0,1]$  satisfies
\begin{equation*}
	\begin{cases}
		\widetilde{u}_{t}(\rho,t)=\widetilde{u}_{\rr\rr}(\rr,t)+(d-1)h^{i}_{1}(\rr)\widetilde{u}_{\rr}(\rr,t)+ f(\widetilde{u}(\rr,t)),	&\rho\in I_i ,\ t>0,\\
		\widetilde{u}(\rr,0)=\widetilde{u}_{0}(\rr),&\rho\in I_i.
	\end{cases}
\end{equation*}
If $i=\mathfrak{e}$, we should add an extra condition at the origin in order to have a unique solution, for instance $\widetilde{u}_\rho(0,t)=0$ for all $t>0$. However, as we will see, this extra condition will play no role.

From now on, with some abuse of notation, we will drop the tildes in the notation for $i$-symmetric functions, that is, we simply write $u(\rho,t)$.

\noindent \underline{2. Moving coordinates.}  Looking at Proposition~\ref{NecesityOfLogathElliptic}, we expect  the transition zone where $u$ is not  $0$ nor~$1$  to be located, for $t$ big enough, around
\begin{equation}\label{R,c0,c*,lambda0}	
    R(t):=c_* t-k\log t+\rho_{0},
\end{equation}
where $c_*$ is given by~\eqref{c0andc*}, and $k$ and  $\rho_0$  are positive constants that we will choose later. Thus we study ${u}$ in the moving coordinates $\varrho=\rho-R(t)$, that is,
\begin{equation}\label{utransladadaR(t)}
    \hat{u}(\varrho,t):={u}(\varrho+R(t),t-1),
\end{equation}
which is a solution to
\begin{equation*}
	\begin{cases}
    	\hat{u}_{t}=\hat{u}_{\varrho\varrho}+\big(\mathcal H^{i}(\varrho,t)+c_0-\frac{k}{t}\big)\hat{u}_{\varrho}+f(\hat{u}),&\varrho+R(t)\in I_i,\ t>1,\\
		\hat{u}(\varrho,1)={u}_{0}(\varrho+R(1)),&\varrho+R(1)\in I_i,
	\end{cases}
\end{equation*}
where we have defined
\begin{equation}\label{h}
    \mathcal H^{i}(\varrho,t):=(d-1)\left(h^{i}_{1}(\varrho+R(t))-1\right), \quad i\in\{\mathfrak{e},\mathfrak{h},\mathfrak{p}\}.
\end{equation}
The translation in time is just to avoid difficulties with the logarithm at the initial time. 

Observe  that taking $\rho_0>0$ large enough, we can assume $R(t)\geq \tfrac{c_*}2t$ for all $t\geq 1$. Therefore, $\mathcal H^{i}(\varrho,t)$ is bounded for all $\rho\in I_i$, $\varrho=o(t)$ as $t\to\infty$, $t\geq 1$ and $i\in\{\mathfrak{e},\mathfrak{h},\mathfrak{p}\}$, thus overcoming the problem introduced by the asymptote of $h^{\mathfrak{e}}_1$. In particular, if $\varrho=o(t)$, then $\varrho+R(t)\to\infty$ as $t\to\infty$, and the \lq\lq curvature coefficient'' $\mathcal H^i$ satisfies
\begin{equation}\label{Hi-to-0}
    \mathcal H^i(\varrho,t)\to0 \quad\text{as }t\to\infty.
\end{equation}
This estimate will be made precise in Lemma \ref{lemma:bound-H}. 

Hence, we expect $\hat{u}$ to solve approximately, for $\varrho=o(t)$ and $t$ large, 
\begin{equation}\label{eq:check-u}    
    \check{u}_t=\check{u}_{\varrho\varrho}+\big(c_0-\tfrac{k}{t}\big)\check{u}_{\varrho}+ f(\check{u}).
\end{equation}
In the following, we will use the ``check'' notation to denote an approximate solution that serves as a model.

\begin{remark}
    In the parabolic case $\mathcal{H}^{\mathfrak{e}}(\varrho,t)\equiv 0$ (recall the equation for $u$~\eqref{problem-parabolic-intro} in the introduction), and there is no error in the approximation $\check{u}=\hat u$.
\end{remark}

Note that \eqref{eq:check-u} is the equation studied by Roquejoffre, Rossi and Roussier-Michon in~\cite[Section~2]{RoquejofferNdimensional} to address the $d$-dimensional Euclidean KPP problem~\eqref{KPPproblemEuclidean} in the radial-like setting. Nevertheless, the neglect of the error term here is one of the most delicate points of the paper, since it is not enough to have the decay given in \eqref{Hi-to-0}. We need at least $\mathcal H^i(\varrho,t)=o(1/t)$. Otherwise, the error term may produce an additional contribution to the logarithmic correction; see, for instance, the radial Euclidean case in high dimensions, where the error is $O(1/t)$ and $k=(d+2)/c_0$.  

In our case, we will take
\begin{equation}\label{k}
    \lambda_0k=\frac{3}{2},
\end{equation}
a choice that will become clear after our discussion below.

\noindent\underline{3. Adaptation of the Euclidean proof.} 
Going back to \eqref{eq:check-u}, we observe that this is nothing but the Euclidean unidimensional KPP equation in moving coordinates. In view of \eqref{BramsonResult},  $\check{u}$ is known to approach the traveling wave $\Phi_{c_0}$, choosing $k=3/c_0$. Hence we also expect a traveling-wave behavior for $\hat u$. 

The proof of this fact will be carried out separately in each of the following three regions: 
\begin{itemize}
    \item Diffusive: $|\rho+\rho_0-R(t)|\le o(\sqrt t)$. 
    \item Subdiffusive: $\rho+\rho_0\le R(t)-o(\sqrt t)$.
    \item Superdiffusive: $\rho+\rho_0\ge R(t)+o(\sqrt t)$.
\end{itemize}
The need to introduce $\rho_0$ is technical, its only effect being to translate the regions under consideration by a constant quantity. This difficulty does not appear in the Euclidean case, as it comes from the curvature terms, as we have explained in Step 2.  In any case, observe that
\begin{equation*}
 R(t)-\rho_0=c_* t-\tfrac{3}{c_0}\log t,
\end{equation*}
so that the diffusive region reduces to $\big|\rho-(c_* t-\tfrac{3}{c_0}\log t)\big|\le o(\sqrt t)$. The precise $o(\sqrt t)$  will be of order~$t^\gamma$ for some $\gamma\in(0,1/2)$, as we will explain in Section~\ref{section:details}. Thus, for $t$ large, $\rho_0\ll t^\gamma$, and its effect can be neglected.  

Let us explain now what happens in each of the three different scenarios.

In the subdiffusive region solutions converge to 1. This is shown by comparison with a cleverly chosen subsolution, once one knows the behavior at the left end of the diffusive region and  knowing also that solutions tend to 1 in compact sets.

In the superdiffusive region solutions converge to 0. This is done by comparison, once one knows the behavior at the right end of the diffusive region.

In the diffusive region we will show convergence to a traveling wave. The behavior in this region will be determined by the behavior at its right edge. This will be made more precise in Lemma~\ref{LemmaS}, where we study the difference  between the solution $u$ and a suitable traveling wave. In particular, we will show that if at the right edge both $u$ and the traveling wave coincide exactly, then we have convergence in the whole diffusive interval. Note that in our proof,  we only need a good estimate for  the boundary condition at the left edge, but not a sharp behavior.

In summary, we expect KPP reactions to be in the so-called \emph{pulled} case, using the terminology of the reaction-diffusion community, so that the large-time behavior of the solution is determined by its behavior at the leading edge of the front, $\rho=c_* t-\tfrac{3}{c_0}\log t+o(\sqrt t)$ \cite{Stokes-1976}.

\noindent\underline{4. The behavior at the front.}
Thus, our goal is to determine the dynamics of $u$ at the leading edge of the front, where we need a very precise matching of our solution to a traveling wave. Recall that we have set $\varrho=\rho-R(t)$. 
At this position the traveling wave has an exponential decay, see~\eqref{behaviorTw}. To compensate for this decay, we make a zoom of $\hat{u}$. For this, we consider the new variable
\begin{equation}\label{vdefinicion}	
	v(\varrho, t):=e^{\lambda_0\rho} \hat u(\varrho,t),
\end{equation}
which satisfies
\begin{equation}\label{eq:v-step3}
	\begin{cases}	
        v_{t}=v_{\varrho\varrho}+\big(\mathcal H^{i}-\frac{k}{t}\big)(v_{\varrho}-\lambda_0 v)+ e^{\lambda_0\varrho} f(e^{-\lambda_0\varrho}v)-f'(0)v,	&\varrho+R(t)\in I_i,\ t>1,\\
		v(\varrho,1)=e^{\lambda_0\varrho}{u}_{0}(\varrho+R(1)),&\varrho+R(1)\in I_i.
	\end{cases}
\end{equation}
 
To understand the behavior at $\varrho=o(\sqrt t)$ in the moving frame, it is natural to use self-similar coordinates
\begin{equation}\label{hatw-transformacion4b}	
    \eta:=\frac{\varrho}{\sqrt{t}},\qquad \tau:=\log t.
\end{equation}
Nevertheless, we have to take some care here. As we have mentioned above, we expect the solution $\hat u$ in moving coordinates to be close to a traveling wave $\Phi_{c_0}(\varrho)$ if $\varrho$ is large. But if $\varrho$ is large, then $\Phi_{c_0}(\varrho)\asymp \varrho e^{-\lambda_0\varrho}$ (recall~\eqref{behaviorTw}). This means that
\[
    v(\eta t^{1/2},t)= e^{\lambda_0\eta t^{1/2}}\hat u(\eta t^{1/2},t)\asymp e^{\lambda_0\eta t^{1/2}} \Phi_{c_0}(\eta t^{1/2})\asymp \eta t^{1/2}.
\]
Therefore, to have a function of order 1 as $t\to\infty$ for $\eta>0$ we need to zoom down $v$ in this scale by a factor $t^{-1/2}$.

Motivated by this discussion, we define
\begin{equation}\label{hatw-transformacion4a}
    {w}(\eta,\tau):=e^{-\tau/2}v(\varrho,t), 
\end{equation}
in self-similar coordinates \eqref{hatw-transformacion4b}. Then $w$ satisfies
\begin{equation}\label{hatw with L}
	\begin{cases}
        {w}_{\tau}+\mathcal{L}{w}=e^{-\tau/2}H^{i}{w}_{\eta}-(\lambda_0 H^{i}+\frac{3}{2}){w}+ {F}(\eta,\tau,{w}),&\eta e^{\tau/2}\geq -R(e^\tau),\ \tau>0,\\
		{w}(\eta,0)=e^{\lambda_0\eta}{u}_{0}(\eta+R(1)),&\eta\geq-R(1),
	\end{cases}
\end{equation}
where 
\begin{align}
    \mathcal{L} w&:=- w_{\eta\eta}-\tfrac{\eta}{2} w_{\eta}-(\lambda_0k-\tfrac12) w,\\
    \label{hat{h}^i} H^{i}(\eta,\tau)&:=e^{\tau}\mathcal H^{i}(\eta e^{\tau/2},e^{\tau})-k,\quad i\in\{\mathfrak{e},\mathfrak{h},\mathfrak{p}\},\\
    \label{hat{F}} {F}(\eta,\tau,w)&:=e^{\tau/2+\lambda_0\eta e^{\tau/2}}f\big(e^{\tau/2-\lambda_0\eta e^{\tau/2}} w\big)-e^{\tau}f'(0) w.
\end{align}

Finally, we will need  some additional information about $w$ far from the front. Notice that
\begin{equation}\label{Dirichlet1}
    w(0,\tau)=t^{-1/2}v(0,t)=t^{-1/2}\hat u(0,t).
\end{equation}
Since $0\le u(0,t)\le 1$, this yields  
\begin{equation}\label{Dirichlet-w-motivate}
    w(0,\tau)\approx0\quad\textup{as }\tau\to\infty.
\end{equation}

\noindent\underline{5. Convergence to the dipole.}
To illustrate the general case, let us explain this scheme for the approximate equation \eqref{eq:check-u}. Motivated by the above discussion,  we set
\begin{equation}	
	\check v(\varrho, t):=e^{\lambda_0\varrho} \check u(\varrho,t),
\end{equation}
which is a solution to 
\begin{equation}\label{eq:checkv-step3}
    \check v_{t}=\check v_{\varrho\varrho}-\tfrac{k}{t}(\check v_{\varrho}-\lambda_0 \check v)+ e^{\lambda_0\varrho} f(e^{-\lambda_0\varrho}v)-f'(0)v,
\end{equation}
and  
\begin{equation}\label{equation-check-w}
    \check{w}(\eta,\tau):= t^{-1/2}\check v(\eta t^{1/2},t).
\end{equation}
Then, the approximation $\check w$ satisfies 
\begin{equation}\label{checkw with L}
    \check w_{\tau}+\mathcal{L}\check w=\check F, 	
\end{equation}
for some right-hand side $\check F$ that is expected to be small. If we take $k$ satisfying \eqref{k}, then $\mathcal{L}$ is the Fokker-Planck operator 
\[
    \mathcal{L}\check w=-\check w_{\eta\eta}-\tfrac{\eta}{2}\check w_{\eta}-\check{w},
\]
which can be made self-adjoint through the standard change of variables
\begin{equation}\label{w-to-omega}	
    \check \omega:=e^{-\frac{\eta^2}{8}}{\check w}.
\end{equation}
Indeed, a straightforward computation shows that, up to a small right-hand side,
\begin{equation}
    \check \omega_{\tau}+\mathcal{M}\check \omega=0,\qquad\mathcal{M}\omega:=-\omega_{\eta\eta}+\big(\tfrac{\eta^2}{16}-\tfrac{3}{4}\big)\omega.
\end{equation}
Notice that $\mathcal{M}$ is self-adjoint, with an associated quadratic form in $H^{1}_0(\mathbb{R}_+, \eta\,{\rm d}\eta)$. This yields the general structure of solutions of \eqref{checkw with L} in terms of eigenfunction expansions.

Next, motivated by \eqref{Dirichlet-w-motivate}, we complement equation \eqref{checkw with L} with an homogeneous Dirichlet condition at $\tau=0$. Therefore, we expect $w$ to be close for large times to some solution $\check w$ to the problem in a half-line
\begin{equation}\label{arriving-to-dipole}
    \begin{cases}	
        \check w_{\tau}+\mathcal{L}\check w=0,&\eta> 0, \ \tau >0,\\
        \check w(0,\tau)=0,&\tau >0.
    \end{cases}
\end{equation}
It turns out that the large-time behavior for this latter problem is well known: 
\begin{equation}\label{checkw-to-dipole}	
    \check{w}(\eta,\tau)\to\alpha_\infty\phi_0(\eta)\quad\textrm{as }\tau\to\infty\textrm{ in the region }\{\eta\geq 0\},
\end{equation}
where $\phi_0$ is the so-called dipole profile,  given by
\begin{equation}\label{phi_0-dipole}
    \phi_0(\eta):=\eta e^{-\frac{\eta^2}{4}},
\end{equation}
and $\alpha_\infty$ is a constant depending on the initial datum of $\check{w}$ (we refer to~\cite{JLVDipolo} for the full description of solutions for the Fokker-Planck equation).

\noindent\underline{6. Matching the dipole with the tail of the traveling wave.} Step 5 suggests that $\check w(\eta,\tau)\sim \alpha_\infty\eta$ for $\eta$ small, whence $\check v(\varrho,t)\sim \alpha_\infty \varrho$, so that, if all our approximations are true,
\begin{equation}\label{behavior_u_coordenasmoviles}
u(\varrho+R(t), t)\sim  \alpha_\infty\varrho e^{-\lambda_{0}\varrho}
\end{equation}
in the diffusive region for $t$ large enough. We would like to find the exact translation $\xi$ of the traveling wave that matches the behavior~\eqref{behavior_u_coordenasmoviles} at $\varrho+\rho_0=t^\gamma$. 
By~\eqref{behaviorTw}, if $\varrho\to\infty$,  the traveling wave satisfies
\begin{equation}\label{behaviorTWcoordenasmoviles}	
    \Phi_{c_0}(\varrho+\xi)\sim (\varrho+\xi+\kappa) e^{-\lambda_{0}(\varrho+\xi)}.
\end{equation}
Comparing~\eqref{behavior_u_coordenasmoviles} and~\eqref{behaviorTWcoordenasmoviles}, we see that in order to perform the matching we need, up to lower order terms,
\begin{equation*}
    \xi=\rho_0-\beta,\quad\text{with }\beta=\tfrac{1}{\lambda_0}\log\alpha_\infty.
\end{equation*}
As mentioned in Step 3, it turns out that the behavior at the right  endpoint  $\varrho=\rho_0+t^\gamma$ controls the whole diffusive region. Thus we will be able to conclude that
\begin{equation*}
    u(\rho,t)\sim\Phi_{c_0}\big(\rho-[c_*t-\tfrac{3}{c_0}\log t+\beta]\big),
\end{equation*}
as desired.

\noindent\underline{7. Control of the error.} For this argument to be rigorous, we need to control the error terms that appear when we approximate $u$ by $\check u$. In particular, we need to estimate the right-hand side that appears in equation \eqref{arriving-to-dipole}. To this aim,  we set
\begin{equation*}
    \omega:=e^{\frac{\eta^2}{8}} w,
\end{equation*}
as in the approximated \eqref{w-to-omega}. Then $\omega$ is a solution to
\begin{equation*}
    \begin{cases}
        \omega_{\tau}+\mathcal{M}\omega=\ell^{i}_1(\eta,\tau)\omega_{\eta}+\ell^{i}_2(\eta,\tau)\omega+ \mathcal F(\eta,\tau,\omega),&\eta  e^{\tau/2}\geq -R(e^\tau),\ \tau>0,\\
        \omega(\eta,0)=e^{\lambda_0(\eta)+\frac{\eta^2}{8}}{u}_{0}(\eta+R(1)),&\eta\geq-R(1),
	\end{cases}
\end{equation*}
with Dirichlet condition $\omega(0,\tau)=w(0,\tau)$, where
\begin{equation}\label{defi-coefficients}
    \begin{split}
        \mathcal F(\eta,\tau,\omega)&:=e^{\tau/2+\frac{\eta^2}{8}+\lambda_0(\eta) e^{\tau/2}}f\big(e^{\tau/2-\frac{\eta^2}{8}-\lambda_0(\eta) e^{\tau/2}}\omega\big)-e^{\tau}f'(0)\omega,\\
    	\ell_{1}^{i}(\eta,\tau)&:=e^{\tau/2}\mathcal H^{i}(\eta e^{\tau/2},e^{\tau})-ke^{-\tau/2},\\
    	\ell_{2}^{i}(\eta,\tau)&:=-\lambda_0\mathcal H^{i}(\eta e^{\tau/2},e^{\tau})-\tfrac{\eta}{4}l^{i}_1(\eta,\tau)+\lambda_0 k-\tfrac{3}{2}.
    \end{split}
\end{equation}

If $\mathcal F$, $\ell_{1}^{i}$ and $\ell_{2}^{i}$ go to 0 fast enough as $\tau\to\infty$, we expect $\omega$ to behave like the approximate~$\check\omega$ for large times. For this, we need to choose $k$ satisfying \eqref{k}, since otherwise $\ell_{2}^{i}$ will not even vanish asymptotically.

There were some previous results in the literature for equations of the form
\begin{equation}\label{eq6}
	\begin{cases}
        \omega_{t}+\mathcal{M}\omega=-\varepsilon e^{-\frac{\tau}{2}}(\omega_{\eta}-\frac{\eta}{4}\omega),\quad\eta\geq0,\ \tau>0,\\
        \omega(0,\tau)=0,\quad\tau>0.
	\end{cases}
\end{equation}  
with $\varepsilon>0$ small enough and a fast decaying initial datum; see the proof of \cite[Lemma 2.2]{NolenRoquejofre}. Nevertheless, in~\cite{RoquejofferNdimensional}, the authors develop an extensive theory of convergence for solutions of some version of~\eqref{eq6} with more general error terms in the right-hand side, that we will be able to adapt for our problem in the hyperbolic space. \\

%%%%%%%%%%%%%%%%%%%%%%%%%%%%%%%%%%%%%%%%%%%%%%%%%%%%%%%%%%%%%%%%%%%%%%%%%%%%%
%%%%%%%%%%%%%%%%%%%%%%%%%%%%%%%%%%%%%%%%%%%%%%%%%%%%%%%%%%%%%%%%%%%%%%%%%%%%%
\section{Traveling-wave behavior: Details of the proof of Theorem~\ref{ThConvergenceTravellingWave}}\label{section:details}
\setcounter{equation}{0}

Once we have understood the main challenges,  we are able to complete the proof of  Theorem~\ref{ThConvergenceTravellingWave} and show convergence to a traveling wave. We will employ the  techniques developed in~\cite{NolenRoquejofre, NolenRoquejofre2, RoquejofferNdimensional}, taking into account the modifications introduced by the geometry.

As mentioned above, to capture the dynamics of $u$, we will consider three different regions, as we described in Step 3 from Section~\ref{section:strategy}: the diffusive region, the superdiffusive and the subdiffusive one, which will be considered in Subsections \ref{subsection:diffusive}, \ref{subsection:superdiffusive} and \ref{subsection:subdiffusive} below, respectively.

More precisely, we will be able to follow the same strategy as in~\cite{RoquejofferNdimensional}, constructing, in Subsection~\ref{subsect:sub.super}, sub and supersolutions in a translated variable. The reason behind is that the equation that $w$ satisfies is very close to the one given by~\cite[equation (12)]{RoquejofferNdimensional}, if the error terms are small. We devote Subsection~\ref{subsect:control.errors} to control the errors. 

%%%%%%%%%%%%%%%%%%%%%%%%%%%%
\subsection{Control of the errors}
\label{subsect:control.errors}

To write precisely the region $ \pm o(\sqrt t)$ introduced  above,  we set
\begin{equation}\label{eta_delta}	
    \delta\in (0,\tfrac{1}{4}),\quad\eta_{\delta}\in\{\eta_{\delta}^{-},0,\eta_{\delta}^{+}\}\quad\textrm{with } \eta_{\delta}^{\pm}=\eta_{\delta}^{\pm}(\tau):=\pm e^{-(\frac{1}{2}-\delta)\tau}.
\end{equation}
Observe that both $\eta^{\pm}_{\delta}(\tau)$ vanish exponentially fast as $\tau\to\infty$. 

Consider the translations 
\begin{equation*}
    \tilde{w}(\eta,\tau):={w}(\eta+\eta_{\delta},\tau).
\end{equation*}
Now $\tilde w$ solves
\begin{equation}
	\begin{cases}
		\label{tilde{w}- equation}	\tilde{w}_{\tau}+\mathcal{L}\tilde{w}=\big((\delta-\frac{1}{2})\eta_{\delta}+e^{-\tau/2}H^{i}(\eta+\eta_{\delta},\tau)\big)\tilde{w}_{\eta} &\\
		\hspace{20mm}-(\lambda_0H^{i}(\eta+\eta_{\delta},\tau)+\frac{3}{2})\tilde{w}+{F}(\eta+\eta_{\delta},\tau,\tilde{w}),&\eta  e^{\tau/2}+\eta_{\delta}\geq -R(e^\tau),\;\;\tau>0,\\
		\tilde{w}(\eta,0)=\tilde w_0(\eta),&\eta+\eta_\delta\geq-R(1),
	\end{cases}
\end{equation}
where we have defined
\begin{equation}\label{tilde-w0}
    \tilde w_0(\eta):=e^{\lambda_0(\eta+\eta_{\delta})}{u}_{0}(\eta+\eta_{\delta}(0)+R(1)).
\end{equation}

Motivated by the change \eqref{w-to-omega} and the discussion above, we set
\begin{equation}\label{w-definition}	
    \tilde\omega:=e^{\frac{\eta^2}{8}}\tilde{w},
\end{equation}
 which is a solution of
\begin{equation}\label{equation-omega-translated}
    \begin{cases}
        \tilde\omega_{t}+\mathcal{M}\tilde\omega=\ell^{i}_1(\eta,\tau)\tilde\omega_{\eta}+\ell^{i}_2(\eta,\tau)\tilde\omega+ \mathcal F(\eta,\eta_{\delta},\tau,\tilde\omega),&\eta  e^{\tau/2}+\eta_{\delta}\geq -R(e^\tau),\ \tau>0,\\
        \tilde\omega(\eta,0)=e^{\frac{\eta^2}{8}}\tilde w_0(\eta),
	\end{cases}
\end{equation}
where
\begin{equation}\label{defi-coefficients-tilde}
    \begin{split}
        \mathcal F(\eta,\eta_{\delta},\tau,\tilde\omega)&:=e^{\tau/2+\frac{\eta^2}{8}+\lambda_0(\eta+\eta_{\delta}) e^{\tau/2}}f\big(e^{\tau/2-\frac{\eta^2}{8}-\lambda_0(\eta+\eta_{\delta}) e^{\tau/2}}\tilde\omega\big)-e^{\tau}f'(0)\tilde\omega,\\
        \ell_{1}^{i}(\eta,\eta_{\delta},\tau)&:=(\delta-\tfrac{1}{2})\eta_{\delta}(\tau)+e^{\tau/2}\mathcal H^{i}((\eta+\eta_\delta) e^{\tau/2},e^{\tau})-ke^{-\tau/2},\\
        \ell_{2}^{i}(\eta,\eta_{\delta},\tau)&:=-\lambda_0e^{\tau}\mathcal H^{i}((\eta +\eta_\delta) e^{\tau/2},e^{\tau})-\tfrac{\eta}{4}\ell^{i}_1(\eta,\tau)+\lambda_0 k-\tfrac{3}{2}.
    \end{split}
\end{equation}
With the choice of $k$ as in \eqref{k} we will prove that the error terms $\ell^i_j$, $j=1,2$, vanish as $\tau$ goes to infinity. To this aim we have to control the  curvature factor $\mathcal H^i$, given in \eqref{h}, showing that it goes to zero fast enough to compensate the exponential in front of it.

\begin{lemma}\label{lemma:bound-H}
    For $\rho_0$ large enough, we have the estimate
    \begin{equation*}
        e^{3\tau/2}\|\mathcal H^{i}(\eta e^{\tau/2},e^\tau)\|_{L^\infty(0,\infty)} \to 0\quad \text{as }\tau\to\infty.
    \end{equation*}
    for all $i\in \{\mathfrak{e},\mathfrak{h},\mathfrak{p}\}$.
\end{lemma}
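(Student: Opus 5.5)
We estimate $\mathcal H^i=(d-1)(h_1^i-1)$ directly from the table of $h_1^i$ in Section~\ref{section:geometry}, evaluated at $\rho=\eta e^{\tau/2}+R(e^\tau)$ with $t=e^\tau$. Since $\eta\ge0$ on the range in the statement, we have $\rho\ge R(t)$. The parabolic case is trivial: $h_1^{\mathfrak p}\equiv1$, so $\mathcal H^{\mathfrak p}\equiv0$. In the other two cases we use elementary exponential bounds for $\rho$ bounded away from $0$. For the elliptic case,
\[
\coth\rho-1=\frac{2e^{-2\rho}}{1-e^{-2\rho}}\in(0,4e^{-2\rho}]\quad\text{for }\rho\ge\tfrac12\log2 .
\]
For the hyperbolic case,
\[
|\tanh\rho-1|=\frac{2e^{-2\rho}}{1+e^{-2\rho}}\le 2e^{-2\rho}\quad\text{for all }\rho\ge0 .
\]
So in both cases $|\mathcal H^i(\varrho,t)|\le 4(d-1)e^{-2(\varrho+R(t))}$, provided $\varrho+R(t)\ge\tfrac12\log 2$.

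The key step is a linear lower bound on $R$, which is where the choice of $\rho_0$ enters. Since $c_*>0$ because $f'(0)>\lambda_1$ (see \eqref{eq:equivalent.conditions}), we have $k\log t\le \tfrac{c_*}{2}t$ for all $t\ge1$ once $\rho_0$ absorbs the constant $\max_{t\ge1}(k\log t-\tfrac{c_*}{2}t)$. Indeed, taking $\rho_0\ge \tfrac12\log2+\max_{t\ge 1}\big(k\log t-\tfrac{c_*}2 t\big)$ gives $R(t)\ge \tfrac{c_*}{2}t+\tfrac12\log2$ for all $t\ge1$, as already anticipated in Step~2 of Section~\ref{section:strategy}. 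Then for every $\eta\ge0$ and $\tau\ge0$,
\[
|\mathcal H^i(\eta e^{\tau/2},e^\tau)|\le 4(d-1)\,e^{-2R(e^\tau)}\le 4(d-1)\,e^{-c_* e^{\tau}} .
\]
The bound uses only $\eta\ge0$, so it is uniform in $\eta$.

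Finally, $e^{3\tau/2}e^{-c_*e^{\tau}}\to0$ as $\tau\to\infty$, since a double exponential decay beats any single exponential. This gives the claimed limit for $i\in\{\mathfrak e,\mathfrak h\}$, and the parabolic case is identically zero.

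The only delicate point is the elliptic singularity of $\coth$ at $\rho=0$. It is handled precisely by choosing $\rho_0$ large, so that $\rho\ge R(t)$ stays uniformly away from the origin. If later one needs the same bound on the translated domain $\eta+\eta_\delta\ge0$, which is slightly negative, the same argument applies, since $|\eta_\delta|e^{\tau/2}=e^{\delta\tau}=t^{\delta}\ll c_*t/4$.
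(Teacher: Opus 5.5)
Your proposal is correct and follows essentially the same route as the paper: you use the explicit formulas for $|h_1^i-1|$, choose $\rho_0$ large so that $\eta e^{\tau/2}+R(e^\tau)$ stays away from the singularity of $\coth$ at the origin, and use that this argument grows like $c_*e^{\tau}$, so the resulting double-exponential decay beats $e^{3\tau/2}$. You also make explicit the step the paper calls ``immediate'', namely the lower bound $R(t)\ge \tfrac{c_*}{2}t+\tfrac12\log 2$, which gives $|\mathcal H^i(\eta e^{\tau/2},e^\tau)|\le 4(d-1)e^{-c_*e^{\tau}}$ uniformly in $\eta\ge0$.
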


\begin{proof}
Recall the definition \eqref{h}, set 
\begin{equation*}    
    \sigma(\eta,\tau):=\eta e^{\tau/2}+R(e^\tau),
\end{equation*}
and take $\rho_0$ large enough such that $\sigma(\eta,\tau)\geq 1$ for all $\eta\geq 0$ and $\tau>0$. Since
\begin{equation}\label{g-decay}	
    |h^{i}_{1}(\varrho)-1|=
	\begin{cases}
		\frac{2}{e^{2\varrho}-1},&i=\mathfrak{e},\\
		\frac{2}{e^{2\varrho}+1},&i=\mathfrak{h},\\
		0,&i=\mathfrak{p},
	\end{cases}
\end{equation}
for $\varrho>0$, then we  have $|h^{i}_{1}(\sigma(\eta,\tau))-1|\leq \frac{2}{e^{2}-1}$ and
\begin{equation}\label{behavior g1}
    e^{3\tau/2}	\|h^{i}_{1}(\sigma(\cdot,\tau))-1\|_{L^{\infty}(0,\infty)}\to 0\quad\textrm{as }\tau\to \infty\quad\text{for all }i\in \{\mathfrak{e},\mathfrak{h},\mathfrak{p}\},
\end{equation}
which immediately yields the desired exponential decay for $\mathcal H^i$.
\end{proof}

Additionally, we need to modify slightly this lemma by taking the translated 
\begin{equation*}    
    \sigma(\eta,\eta_{\delta},\tau):=(\eta+\eta_{\delta})e^{\tau/2}+R(e^\tau)\geq-e^{\tau/4}+ c_*e^{\tau}-k\tau+\rho_0\geq1
\end{equation*}
for all $\eta\geq 0$ and $\tau>0$. Then we have the same conclusion
\begin{equation}\label{behavior g}
    e^{3\tau/2}	\sup\limits_{\delta\in (0,\frac{1}{4})}\|h^{i}_{1}(\sigma(\cdot,\eta_{\delta}(\tau),\tau))-1\|_{L^{\infty}(0,\infty)}\to 0\quad\textrm{as }\tau\to \infty\quad\text{for all }i\in \{\mathfrak{e},\mathfrak{h},\mathfrak{p}\}.
\end{equation}
As a consequence, there exists $C>0$ depending only on  $d$, $\delta$ and $f'(0)$ such that
\begin{equation}\label{CotaErrores-li}
	|\ell^{i}_{1}(\eta,\eta_{\delta},\tau)|\leq Ce^{-(\frac{1}{2}-\delta)\tau},\quad
	|\ell^{i}_{2}(\eta,\eta_{\delta},\tau)|\leq C(1+\eta)e^{-(\frac{1}{2}-\delta)\tau}
\end{equation}
for all $\eta\geq 0$, $\tau>0$, $\eta_{\delta}\in\{\eta_{\delta}^{-},0,\eta_{\delta}^{+}\}$ with $\delta\in (0,\frac{1}{4})$ and  $i\in\{\mathfrak{e},\mathfrak{h},\mathfrak{p}\}$. 

\begin{remark}
    Observe that the bounds obtained in~\eqref{CotaErrores-li} for $\ell^{i}_{1}$ and $\ell^{i}_{2}$, for all $i\in\{\mathfrak{e},\mathfrak{h},\mathfrak{p}\}$ improve  the ones in~\cite[eq. (19)]{RoquejofferNdimensional} for their counterparts in the Euclidean case, defined in  \cite[transformation~\textsc{7}, section 2]{RoquejofferNdimensional}. 
\end{remark}

\begin{remark}\label{rem:always.same.correction}
    The parameter $k$, given in ~\eqref{k}, does not depend on the dimension $d$, neither on the type of isometry $i\in\{\mathfrak{e},
    \mathfrak{h},\mathfrak{p}\}$. This is an important difference with the KPP problem in the  Euclidean space, where the parameter $k$ depends on the isometry. Specifically, $k=\frac{d+2}{c_0}$ in the  elliptic case and $k=3/c_0$ in the parabolic one. 
\end{remark}

%%%%%%%%%%%%%%%%%%%%%%%%%%%%
\subsection{Sub and supersolutions} 
\label{subsect:sub.super}

Now we are able to give sharp bounds for $w$. We will construct a  subsolution in the region $\{\eta\geq\eta^{+}_\delta(\tau)\}$, i.e., a lower barrier where $\rho$ is  far ahead of the front of the traveling wave; and a supersolution in the region $\{\eta\geq\eta^{-}_\delta(\tau)\}$, that is, where $\rho$ is far behind the rear of the front of $\Phi_{c_0}$. 

For this, consider the translated $\tilde w$ in the notation from \eqref{eta_delta}. In the translated variables, we need to study the region $\{\eta>0\}$.

It is also convenient to switch to the notation $\tilde \omega$ from \eqref{w-definition}	and study the equation
\begin{equation}\label{EquationM}
    \tilde\omega_{\tau}+\mathcal{M}\tilde\omega=\ell^{i}_1(\eta,\tau)\tilde\omega_{\eta}+\ell^{i}_2(\eta,\tau)\tilde\omega+ \mathcal F(\eta,\eta_{\delta},\tau,\tilde\omega), \quad\eta>0.
\end{equation}
Specifically, we will obtain  two functions $\overline{\omega}$ and $\underline{\omega}$ such that
\begin{align*}
	&\overline{\omega}_{\tau}+\mathcal{M}\overline{\omega}\geq \ell^{i}_1(\eta,\tau)\overline{\omega}_{\eta}+\ell^{i}_2(\eta,\tau)\overline{\omega}+ \mathcal F(\eta,\eta_{\delta},\tau,\overline{\omega}),\\
	&\underline{\omega}_{\tau}+\mathcal{M}\underline{\omega}\leq \ell^{i}_1(\eta,\tau)\underline{\omega}_{\eta}+\ell^{i}_2(\eta,\tau)\underline{\omega}+ \mathcal F(\eta,\eta_{\delta},\tau,\underline{\omega}),
\end{align*}
in $\{\eta\geq 0\}$  such that $\overline{\omega}(0,\tau)$ and $\underline{\omega}(0,\tau)$ vanish as $\tau\to\infty$. Recall that we are choosing $\eta_\delta=\eta_\delta^{-}$ for $\overline{\omega}$ and $\eta_\delta=\eta_\delta^{+} $ for $\underline{\omega}$, in the notation from~\eqref{eta_delta}.

The following proposition, that we borrow from~\cite{RoquejofferNdimensional}, yields a supersolution $\overline{\omega}$ and a subsolution~$\underline{\omega}$ that are comparable to $e^{\eta^2/8}\phi_0$ as $\tau\to\infty$. This proposition also holds in our case because $\ell^{i}_{1},  \ell^{i}_2$ satisfy~\eqref{CotaErrores-li} for $i\in\{\mathfrak{e},\mathfrak{h},\mathfrak{p}\}$.

\begin{proposition}\label{PropostionSubAndSuperFor M} \textup{(\cite[Proposition 1]{RoquejofferNdimensional})}
    Let $a\in (0,1)$, $\gamma_{1}(\eta)$ a nonnegative smooth function, identically  equal to $1$ if $\eta\leq a/2$ and zero if $\eta\geq 2a/3$, and $\gamma_2(\eta)$ be a nonnegative function, identically zero if $\eta\leq 1$ and equal to $1$ if $\eta\geq 2$. Then,
    \begin{equation*}\label{SupersolutionM}
        \overline{\omega}(\eta,\tau):=\xi^{+}(\tau)e^{\eta^2/8}\phi_0(\eta)+q^{+}_1(\tau)\gamma_{1}(\eta)\cos(\tfrac{\pi}{2a}\eta)
        +q^{+}_2(\tau)\gamma_{2}(\eta)e^{-\frac{\eta^2}{16}}
    \end{equation*}
    is a supersolution to~\eqref{EquationM} with $\eta_{\delta}=\eta_{\delta}^{-}$ in the range $\tau>0$, $\eta>0$, whereas the function
    \begin{equation*}\label{SubsolutionM}
        \underline{\omega}(\eta,\tau):=\xi^{-}(\tau)e^{\eta^2/8}\phi_0(\eta)-q^{-}_1(\tau)\gamma_{1}(\eta)\cos(\tfrac{\pi}{2a}\eta)
        -q^{-}_2(\tau)\gamma_{2}(\eta)e^{-\frac{\eta^2}{16}}
    \end{equation*}
    is a subsolution to~\eqref{EquationM} with $\eta_{\delta}=\eta_{\delta}^{+}$ in the range $\tau>\tau_1$, $\eta>0$ for some $\tau_1>0$.
    Here  we can choose, for any given constants  $C_0^-,C_0^+>0$, functions $q^{\pm}_{1}(\tau)$, $q^{\pm}_{2}(\tau)$ and $\xi^{\pm}(\tau)$  that satisfy,   
    \begin{equation}\label{estimate:q}
        \begin{split}
            &C_0^+e^{-\tau}\leq q_{j}^{+}(\tau)\leq Ce^{-(\frac{1}{2}-\delta)\tau},\quad 0\leq q^{-}_{j}(\tau)\leq C_0^-e^{-(\frac{1}{2}-\delta)\tau}\quad\textrm{for all }\tau>0\textrm{ and }j\in\{1,2\},\\
            &\xi^{\pm}(\tau)\in[\underline{\xi}_0,\overline{\xi}_0],\quad \partial_{\tau}\xi^{+}(\tau)>0,\quad \partial_{\tau}\xi^{-}(\tau)<0\textrm{ for all }\tau>0,
        \end{split}
    \end{equation}
    for some $C>1$ and $\overline{\xi}_0\geq\underline{\xi}_0>0$.     
\end{proposition}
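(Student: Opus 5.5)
We verify the statement by direct computation, plugging $\overline{\omega}$ and $\underline{\omega}$ into \eqref{EquationM}. The structural point is that in \cite[Proposition~1]{RoquejofferNdimensional} the only properties of the lower-order coefficients that are used are bounds of the form $|\ell_1|\le Ce^{-(\frac12-\delta)\tau}$ and $|\ell_2|\le C(1+\eta)e^{-(\frac12-\delta)\tau}$. Together with the KPP structure of the nonlinearity, these are all that enter. Since \eqref{CotaErrores-li} gives exactly these bounds, uniformly in $i\in\{\mathfrak e,\mathfrak h,\mathfrak p\}$, the proof of \cite{RoquejofferNdimensional} should transfer verbatim.

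For the nonlinear term, hypothesis \eqref{KPPfunction.2} gives $f(s)\le f'(0)s$, hence $\mathcal F(\eta,\eta_\delta,\tau,\omega)\le 0$ whenever $\omega\ge0$. For the subsolution we need a lower bound. Writing $f(s)\ge f'(0)s-Ms^2$, we get
\[
|\mathcal F|\le M e^{3\tau/2-\frac{\eta^2}{8}-\lambda_0(\eta+\eta_\delta^+)e^{\tau/2}}\omega^2 .
\]
On the relevant region this is exponentially small, because $\eta_\delta^+e^{\tau/2}=e^{\delta\tau}$ and $\omega$ is bounded by a multiple of $\eta$ near $0$. Where $\underline{\omega}<0$ we can take $\max(\underline\omega,0)$ instead, in the usual way.

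For the supersolution, each of the three pieces is handled against the error terms as follows.

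\begin{itemize}
\item \textbf{Dipole term.} The function $e^{\eta^2/8}\phi_0$ lies in the kernel of $\mathcal M$. The error terms $\ell_1\partial_\eta+\ell_2$ applied to it are bounded by $C(1+\eta)^2e^{-(\frac12-\delta)\tau}e^{\eta^2/8}\phi_0$. These are absorbed by choosing $\xi^+$ to solve $\dot\xi^+=C'e^{-(\frac12-\delta)\tau}\xi^+$, which keeps $\xi^+$ increasing and bounded.
\item \textbf{Near $\eta=0$.} The dipole vanishes at $\eta=0$, so the large-$\eta$ weight degenerates there. This region is covered by the term $q_1^+\gamma_1\cos(\frac{\pi}{2a}\eta)$. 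Here $\mathcal M$ acts with eigenvalue approximately $(\pi/2a)^2-3/4>0$ for $a$ small. This strict positivity absorbs the errors and the cutoff derivatives provided $q_1^+$ decays no faster than $e^{-(\frac12-\delta)\tau}$; the lower bound $q_1^+\ge C_0^+e^{-\tau}$ is compatible with this.
\item \textbf{Large $\eta$.} Here $e^{-\eta^2/16}$ gives $\mathcal M e^{-\eta^2/16}\approx(\frac{\eta^2}{16}\cdot\frac34-\dots)e^{-\eta^2/16}$, which is positive for $\eta$ large. This covers the growth $(1+\eta)$ in $\ell_2$ relative to the dipole tail.
\end{itemize}

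The subsolution is built symmetrically, with signs reversed and $\xi^-$ decreasing. The threshold $\tau_1$ is needed so that the nonlinear term and the cutoff contributions are dominated.

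The main obstacle is the transition zones $\eta\in[a/2,2a/3]$ and $\eta\in[1,2]$. There the derivatives of the cutoffs produce terms of size $q_j$ with no sign. These must be dominated by the dipole's lower-order contribution $(\xi^\pm)'e^{\eta^2/8}\phi_0$, which is of size $e^{-(\frac12-\delta)\tau}$. This forces the tuning $q_j\lesssim e^{-(\frac12-\delta)\tau}$ together with a large constant in $\xi'$. I would follow the ODE choices of \cite{RoquejofferNdimensional} exactly and only recheck that our improved bounds \eqref{CotaErrores-li} are at least as good as their \cite[eq.~(19)]{RoquejofferNdimensional}. Given that, every inequality there remains valid.
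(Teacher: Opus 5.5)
Your proposal matches the paper's argument. The paper also does no independent computation: it invokes \cite[Proposition~1]{RoquejofferNdimensional} on the grounds that $\ell_1^i,\ell_2^i$ satisfy \eqref{CotaErrores-li}, which is at least as good as the Euclidean bound used there, and your account of how the dipole, the $\cos(\tfrac{\pi}{2a}\eta)$ boundary-layer term and the $e^{-\eta^2/16}$ tail share the work is consistent with that construction (including the factor $e^{-\lambda_0 e^{\delta\tau}}$ that makes $\mathcal F$ harmless for the subsolution). One small slip: $(\ell_1\partial_\eta+\ell_2)(e^{\eta^2/8}\phi_0)$ is not bounded by a multiple of $e^{\eta^2/8}\phi_0$ near $\eta=0$, because that function vanishes there while its derivative equals $1$; this is harmless, since you correctly assign that region to the $q_1^+$ term.
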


Before we continue, let us describe the approximate Dirichlet condition at $\eta_\delta^\pm$. Since
\[
    {w}(\eta,\tau):=e^{-\tau/2}e^{\lambda_0\eta e^{\tau/2}} \hat u(\varrho,t),
\]
and $u$ is bounded, then
\begin{equation}\label{Dirichlet2}
    \begin{aligned}
        w(\eta_{\delta}^\pm(\tau),\tau)&:=e^{-\tau/2}e^{\pm\lambda_0e^{\delta\tau} }\hat u(e^{\tau/2}\eta_\delta^{\pm}(\tau),e^{\tau})\\
        &=e^{-\tau/2}e^{\pm\lambda_0e^{\delta\tau} } u(\pm e^{\delta\tau}+R(e^\tau),e^\tau)=O(e^{-\tau/2}e^{\pm\lambda_0e^{\delta\tau} })\quad\text{as }\tau\to\infty.
    \end{aligned}
\end{equation}

Now we use the comparison principle with the sub/supersolutions we have just found:

\begin{proposition}\label{Sub/Super for hat{w}}\textup{(Adaptation of~\cite[Proposition 5 and Corollary 1]{RoquejofferNdimensional})}	 
    Let $f'(0)>\lambda_{1}$ and $u_0$ satisfying~\eqref{eq:symmetry.u0}--\eqref{eq:supported-from-right} for some~$i\in\{\mathfrak{e},\mathfrak{h},\mathfrak{p}\}$. Let  ${w}$ be the solution from~\eqref{hatw-transformacion4a}. Let us use the same notations as in  Proposition~\ref{PropostionSubAndSuperFor M}.
    
    \noindent\textup{(i)} (Control of ${w}$ from above and from the rear of the front) We have that
    \begin{equation}\label{super-hat{w}}	
        {w}(\eta+\eta_{\delta}^{-},\tau)\leq\xi_{+}(\tau)\phi_0(\eta)+q^{+}(\tau)\big(\mathds{1}_{[0,a_0/2]}(\eta)\cos(\tfrac{\pi}{2a_0}\eta)+e^{-\frac{\eta^2}{16}}\big)e^{-\frac{\eta^2}{8}},\quad \eta\geq 0,\ \tau>0.
    \end{equation}
        
    \noindent\textup{(ii)} (Control of ${w}$ from below and from the head of the front) In addition,
    \begin{equation}\label{sub-hat{w}} 
        {w}(\eta+\eta_{\delta}^{+},\tau)\geq \xi_{-}(\tau)\phi_0(\eta)-q^{-}(\tau)\big(\mathds{1}_{[0,a_0/2]}(\eta)\cos(\tfrac{\pi}{2a_0}\eta)+e^{-\frac{\eta^2}{16}}\big)e^{-\frac{\eta^2}{8}},\quad \eta\geq 0,\ \tau>\tau_1.
    \end{equation}
    Here the functions $q^{\pm}$ satisfy the same estimates as $q_i^\pm$ from \eqref{estimate:q}.
\end{proposition}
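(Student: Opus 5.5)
The plan is to compare $\tilde\omega$, the translated function from \eqref{w-definition} with $\eta_\delta=\eta_\delta^{\mp}$, with the barriers $\overline{\omega}$ and $\underline{\omega}$ of Proposition~\ref{PropostionSubAndSuperFor M}. Then we undo the change $\tilde\omega=e^{\eta^2/8}\tilde w$. The comparison takes place on the half-line $\{\eta>0\}$ for $\tau>0$ (resp.\ $\tau>\tau_1$) and uses the one-dimensional parabolic comparison principle for \eqref{EquationM}. That equation is linear in the principal part and has a nonlinearity $\mathcal F$ that is Lipschitz in $\tilde\omega$ on bounded sets of the original variable $u\in[0,1]$. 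Its coefficients $\ell^i_1,\ell^i_2$ obey \eqref{CotaErrores-li}. These bounds are exactly what Proposition~\ref{PropostionSubAndSuperFor M} requires, and they are where the geometric input (Lemma~\ref{lemma:bound-H} and \eqref{behavior g}) enters.

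\textbf{Step 1: supersolution comparison.} For the upper bound we check three orderings. First, at $\eta=0$ we need $\tilde\omega(0,\tau)=w(\eta_\delta^-,\tau)\le\overline{\omega}(0,\tau)=q_1^+(\tau)$. By \eqref{Dirichlet2}, $w(\eta^-_\delta,\tau)=O(e^{-\tau/2}e^{-\lambda_0e^{\delta\tau}})$, which is much smaller than $C_0^+e^{-\tau}\le q_1^+$. Choosing $C_0^+$ large handles all $\tau>0$, including bounded $\tau$, since $w(\eta_\delta^-,\tau)$ is bounded there. Second, at $\tau=0$ the datum $e^{\eta^2/8}\tilde w_0$ from \eqref{tilde-w0} vanishes for $\eta\ge\rho_*-R(1)$ by \eqref{eq:supported-from-right}, and it is bounded on the compact remainder. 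Hence it lies below $\overline{\omega}(\cdot,0)$ once $\xi^+(0)\ge\underline\xi_0$ and the constants $q^+_j(0)$ are taken large; enlarging $C_0^+$ again is allowed. Third, as $\eta\to\infty$, $\tilde\omega$ grows at most like $e^{\eta^2/8+\lambda_0\eta e^{\tau/2}}$ times a function of $\tau$, so a Phragmén--Lindelöf type comparison (or truncation to $[0,L]$ with $L\to\infty$) is needed. I expect this to be the main technical obstacle. I would handle it by comparing in the $\hat u$ variable instead, where both functions are bounded, which makes Theorem~\ref{ComparsionPrincipleHd} (or its one-dimensional version on a half-line with Dirichlet data) applicable. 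I would also use the KPP bound \eqref{KPPfunction.2}, $\mathcal F\le 0$, so that replacing the nonlinearity by its linearization preserves the supersolution property.

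\textbf{Step 2: subsolution comparison.} For the lower bound, the boundary value $\underline{\omega}(0,\tau)=\xi^-(\tau)\cdot0-q_1^-(\tau)\le 0\le\tilde\omega(0,\tau)$ holds trivially. At the initial time $\tau_1$ we need $\underline\omega(\cdot,\tau_1)\le\tilde\omega(\cdot,\tau_1)$. Here I would use Proposition~\ref{prop:convergence.to.1.compact}, or simply the positivity of $u$ for $t>0$ together with Gaussian lower bounds, to get $\tilde\omega(\cdot,\tau_1)>0$ on compacts. I would then shrink $\xi^-$ (its range $[\underline\xi_0,\overline\xi_0]$ is at our disposal) and use the freedom in $q^-_j$ to push $\underline\omega$ below $\tilde\omega$; for large $\eta$, $\underline\omega$ is negative because of the $-q_2^-e^{-\eta^2/16}$ term dominating $\xi^-\eta e^{-\eta^2/8}$. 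Since $\underline\omega$ may be negative, one compares with $\max(\underline\omega,0)$, which is still a subsolution.

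\textbf{Step 3: conclusion.} Multiplying by $e^{-\eta^2/8}$ and using $e^{\eta^2/8}\phi_0 e^{-\eta^2/8}=\phi_0$ yields \eqref{super-hat{w}} and \eqref{sub-hat{w}}. The cutoffs $\gamma_1,\gamma_2$ are bounded by indicator functions, with $\gamma_2\le1$ and $\gamma_1\le\mathds{1}_{[0,2a/3]}$, and $a_0$ and $q^\pm:=\max(q^\pm_1,q^\pm_2)$ are relabeled accordingly. These still satisfy \eqref{estimate:q}.
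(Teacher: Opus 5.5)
Your proposal is correct and follows the paper's proof. You compare the translated $\tilde\omega$ with the barriers $\overline\omega,\underline\omega$ of Proposition~\ref{PropostionSubAndSuperFor M}, match the Dirichlet data at $\eta=0$ via \eqref{Dirichlet2} and $q_1^+\ge C_0^+e^{-\tau}$, use compact support from the right at $\tau=0$ and positivity of $u$ at $\tau_1$, and then remove the weight $e^{\eta^2/8}$ and relabel $a_0,q^\pm$.

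Your additions make rigorous what the paper leaves loose. You compare in the bounded $\hat u$ variables to avoid growth at $\eta=\infty$, and you reduce the upper bound to the linear problem via $f(u)\le f'(0)u$. You also take $\xi^-(\tau_1)$ small against the strictly positive $\tilde\omega(\cdot,\tau_1)$, instead of asserting, as the paper does, that $\underline\omega(\cdot,\tau_1)\le 0$. The one imprecise step, shared with the paper, is at $\tau=0$: on $[2a/3,1]$ only the $\xi^+$ term of $\overline\omega$ survives, so enlarging the $q_j^+$ alone does not suffice. Either scale the whole linear supersolution by a large constant, or simply take $\rho_0\ge\rho_*+1$, which makes $\tilde\omega(\cdot,0)\equiv 0$ on $\{\eta\ge 0\}$.
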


\begin{proof}
This is essentially \cite[Proposition 5]{RoquejofferNdimensional}, so here we just explain the necessary modifications.

We first observe that, since  $h^{i}_{1}(\cdot+\rho_0)$ is in $L^\infty([-1,\infty))$ for $\rho_0\geq 2$, for all $i\in\{\mathfrak{e},\mathfrak{h},\mathfrak{p}\}$ (here we can see the use of the constant $\rho_{0}$ in order to obtain that $\coth\big((\eta+\eta_{\delta})e^{-\tau/2}+c_*e^{\tau}-k\tau+\rho_0\big)  $ is uniformly bounded  for all $\eta\geq 0$ and $\tau>0$), then the coefficients are smooth and we may use the standard comparison principle for~\eqref{EquationM}.

Now, to obtain \eqref{super-hat{w}}, we compare the supersolution $\overline{\omega}(\eta)$ given by Proposition~\ref{PropostionSubAndSuperFor M} with $\omega(\eta+\eta_\delta^-)$ in the region  $\{\eta\geq 0\}$. For this, we need to match the boundary value at $\eta=0$. But this follows by comparing with the  Dirichlet data for $\omega$, which results from the behavior of $w(\eta_{\delta}^-(\tau),\tau)$  from~\eqref{Dirichlet2}, taking  into account that $\overline{\omega}(0,\tau)=q^{+}_1(\tau)$ and the lower bound for $q^+_1$ from Proposition~\ref{PropostionSubAndSuperFor M}. As for the initial conditions, just note that 
\[
    {w}(\eta+\eta_\delta^+(0),0)=e^{\lambda_0(\eta+1)} \hat u(\eta+1,1),
\]
and that $u_0$ is compactly supported from the right (recall relation \eqref{utransladadaR(t)}). Hence, it is enough to take $C_0$ large enough in Proposition~\ref{PropostionSubAndSuperFor M} in the construction of $q_1^+$.

To obtain \eqref{sub-hat{w}}, we compare the subsolution $\underline \omega(\eta)$ from Proposition~\ref{PropostionSubAndSuperFor M} with $\omega(\eta+ \eta_\delta^+)$ in the region $\{\eta\geq 0\}$. On the one hand, for the Dirichlet conditions at $\eta=0$ we know that $\underline \omega\leq 0$ by construction, while $w\geq 0$. As for the initial data at $\tau=\tau_1$, $w(\eta+\eta_\delta^+,\tau_1)$ is trivially positive. On the other hand, from the two terms in $\underline\omega(\eta,\tau_1)$, the first one is positive and the other two negative. But taking $C_0^-$ small enough the first term wins the game, so that $\underline\omega(\eta,\tau_1)\leq 0$ for all $\eta\geq 0$.
\end{proof}

\begin{corollary}
In the hypothesis of the previous proposition, there exists $C>0$ such that   \begin{equation}\label{CotasDerivadas-hat{w}}	
        |{\tilde w}_{\tau}(\eta,\tau)|+|{\tilde w}_{\eta}(\eta,\tau)|\leq Ce^{-\frac{3\eta^2}{16}}\quad\textrm{for all }\eta\geq 0,\ \tau>\tau_1.
    \end{equation}
\end{corollary}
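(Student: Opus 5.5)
The bound \eqref{CotasDerivadas-hat{w}} follows from the two-sided Gaussian control in Proposition~\ref{Sub/Super for hat{w}} combined with local parabolic regularity for the equation \eqref{tilde{w}- equation} satisfied by $\tilde w$, applied on unit space--time boxes.

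\textbf{Step 1: a pointwise Gaussian bound for $\tilde w$.} From \eqref{super-hat{w}}, using $0\le\phi_0(\eta)=\eta e^{-\eta^2/4}$, the boundedness of $\xi^{+}$ and of $q^{+}$, and $e^{-\eta^2/16}e^{-\eta^2/8}=e^{-3\eta^2/16}$, I obtain
\[
0\le \tilde w(\eta,\tau)\le Ce^{-3\eta^2/16},\qquad \eta\ge0,\ \tau>0 .
\]
Here $\tilde w\ge0$ because $u\ge0$. Both bounds are uniform in $\tau$.

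\textbf{Step 2: local regularity on boxes.} Fix $\eta_1\ge0$ and $\tau_2>\tau_1$, and work in the box $Q=(\eta_1-1,\eta_1+2)\times(\tau_2-1,\tau_2]$, intersected with $\{\eta\ge\eta_\delta^-\}$ near the boundary. On $Q$, equation \eqref{tilde{w}- equation} is a linear uniformly parabolic equation for $\tilde w$:
\begin{itemize}
\item the drift coefficient is $-\eta/2$ plus the bounded errors controlled in \eqref{CotaErrores-li};
\item the zeroth-order coefficient is bounded by $C(1+|\eta_1|)$;
\item the reaction term is written as $F=c(\eta,\tau)\tilde w$. Since $f\in C^2$ and $0\le e^{\tau/2-\lambda_0(\eta+\eta_\delta)e^{\tau/2}}\tilde w=\hat u\le1$, the coefficient $c=e^{\tau}\bigl(f(s)/s-f'(0)\bigr)$ is bounded by $e^{\tau}\cdot C s$. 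This gives $|F|\le Ce^{3\tau/2}e^{-\lambda_0\eta e^{\tau/2}}\tilde w^2$, which is bounded, indeed tiny, for $\eta$ away from $0$.
\end{itemize}
Interior Schauder or $L^p$ estimates (with constants allowed to grow polynomially in $1+\eta_1$ because of the $\eta/2$ drift) then give
\[
|\tilde w_\eta|+|\tilde w_\tau|\le C(1+\eta_1)^m\,\|\tilde w\|_{L^\infty(Q)} .
\]
By Step 1 the right-hand side is at most $C(1+\eta_1)^me^{-3(\eta_1-1)^2/16}$.

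\textbf{Step 3: the main obstacle.} The polynomial factor and the shift $\eta_1-1$ destroy the exact exponent $3/16$. To avoid this I would rescale the box with width $(1+\eta_1)^{-1}$, which makes the drift bounded after rescaling. The Gaussian then varies only by a bounded factor across the box, because $(\eta_1+(1+\eta_1)^{-1})^2-\eta_1^2$ is $O(1)$. This yields the bound $C e^{-3\eta_1^2/16}$ up to a polynomial factor coming from the rescaling.

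To absorb that remaining polynomial factor I see two options:
\begin{itemize}
\item Run Step 1 with a slightly better exponent. This is available because the actual barriers in Proposition~\ref{PropostionSubAndSuperFor M} decay like $e^{-\eta^2/4}\eta$ and $e^{-3\eta^2/16}$; the polynomial can be absorbed except in the $q^+$ tail term.
\item Alternatively, accept a constant $C$ depending on $\delta$, and note that the barrier construction works equally with $e^{-\eta^2/16}$ replaced by $e^{-(1-\epsilon)\eta^2/16}$.
\end{itemize}

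\textbf{Step 4: near the boundary $\eta=0$.} Here I would use boundary regularity. The Dirichlet datum $\tilde w(0,\tau)=w(\eta_\delta,\tau)$ has a $\tau$-derivative controlled through \eqref{Dirichlet2} and the equation. The reaction term is handled via $\hat u\le1$ as in Step 2. Together these give a bounded derivative on $[0,2]$, which is consistent with \eqref{CotasDerivadas-hat{w}} there.
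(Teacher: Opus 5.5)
Your route (pointwise Gaussian control of $\tilde w$ from the barrier \eqref{super-hat{w}}, followed by local parabolic estimates) is the same as the paper's. The paper's proof is only an appeal to parabolic regularity, as in Corollary~1 of Roquejoffre--Rossi--Roussier-Michon. As written, however, your argument falls short in two places.

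First, Step~3 does not recover the exponent $\tfrac{3}{16}$. You correctly note that the rescaled local estimates lose a factor $(1+\eta)$ for $\tilde w_\eta$ and $(1+\eta)^2$ for $\tilde w_\tau$, but neither remedy absorbs it.
\begin{itemize}
\item For large $\eta$, the dominant term in \eqref{super-hat{w}} is exactly the tail $q^+(\tau)e^{-3\eta^2/16}$, which is where your first option breaks down. The decay of $q^+$ in $\tau$ does not help, since the loss is in $\eta$.
\item Your second option goes in the wrong direction. Replacing $e^{-\eta^2/16}$ by $e^{-(1-\epsilon)\eta^2/16}$ \emph{weakens} the bound to $e^{-(3-\epsilon)\eta^2/16}$.
\end{itemize}
What works is to redo the construction of Proposition~\ref{PropostionSubAndSuperFor M} with tail $e^{-a\eta^2}$ for some $a\in(\tfrac{1}{16},\tfrac{1}{8})$. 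One has $\mathcal{M}e^{-a\eta^2}=\bigl((\tfrac{1}{16}-4a^2)\eta^2+2a-\tfrac{3}{4}\bigr)e^{-a\eta^2}$, whose $\eta^2$-coefficient is positive exactly when $a<\tfrac18$. Hence, for large $\eta$, this tail still dominates the errors produced by $\ell^i_1,\ell^i_2$ acting on itself and on $\xi^+\eta e^{-\eta^2/8}$ (the latter of size $O(\eta^2e^{-\eta^2/8}e^{-(1/2-\delta)\tau})$); the rest of the construction is unchanged. This gives $0\le\tilde w\le Ce^{-(1/8+a)\eta^2}$, and the polynomial losses are absorbed by the extra factor $e^{-(a-1/16)\eta^2}$.

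Second, Step~4 assumes that the reaction term is controlled near $\eta=0$ by $\hat u\le1$, and it is not. In terms of $\hat u$, the reaction term is exactly $F=e^{\tau/2}e^{\lambda_0\varrho}\bigl(f(\hat u)-f'(0)\hat u\bigr)$, with $\varrho=(\eta+\eta_\delta)e^{\tau/2}$.
\begin{itemize}
\item At the front, $\varrho=O(1)$, $\hat u$ is of order one. There $F$ is of order $e^{\tau/2}$ and your coefficient $c$ is of order $e^{\tau}$; the bound $\hat u\le1$ only yields $|F|\le Ce^{\tau/2}$.
\item For the $\eta_\delta^-$-translate you are using, the front ($\eta+\eta_\delta^-\approx0$) sits at $\eta\approx e^{-(1/2-\delta)\tau}$. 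This is inside every fixed neighbourhood of the boundary, so the boundary estimate of Step~4 has no $\tau$-uniform constant.
\item This is more than a technicality. From $w(\eta,\tau)=e^{-\tau/2}v(\eta e^{\tau/2},e^{\tau})$ one gets $w_\tau=-\tfrac12w+\tfrac{\eta}{2}w_\eta+t^{1/2}v_t$. A uniform bound on $\tilde w_\tau$ at the front therefore amounts to $\hat u_t=O(t^{-1/2})$ there. That is a convergence-rate statement, which local regularity of $u$ (giving only $\hat u_t=O(1)$) cannot supply.
\end{itemize}
So the region near $\eta=0$ needs a genuinely different argument. Alternatively, prove the estimate only for $\eta\ge\eta_0>0$. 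There, for large $\tau$, $|F|\le Ce^{3\tau/2-\lambda_0\eta_0e^{\tau/2}/2}\,\tilde w^2$ is negligible, and your Steps~2--3 (with the corrected barrier) go through. As $\tau\to\infty$, \eqref{super-hat{w}} gives $0\le w\le C\epsilon+o(1)$ on $[0,\epsilon]$. Combined with that smallness, this restricted estimate is enough for the compactness of trajectories used in Proposition~\ref{Prop-alpha-infty}.
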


\begin{proof}
This is analogous to \cite[Corollary 1]{RoquejofferNdimensional}, using parabolic regularity. 
\end{proof}

As a consequence of Proposition~\ref{Sub/Super for hat{w}},  we know that there is a constant $C>1$ such that
\begin{equation}\label{estimate-hat-w}
	C^{-1}\phi_0(\eta-\eta^+_\delta)\leq {w}(\eta,\tau)\leq C\phi_0(\eta-\eta^-_\delta)\quad\text{ for } \tau\textrm{ large}, 
\end{equation}
for $\eta\ge \eta_\delta^-$ for the upper bound and $\eta\ge \eta_\delta^+$ for the lower one. This is far from a convergence result, since these estimates do not even determine the right multiple of $\phi_0$ that one should obtain in the limit. However, Proposition~\ref{Sub/Super for hat{w}} is a first step towards the desired result, since it will give the compactness of the trajectories $\{{w}(\cdot,\cdot+T)\}_{T>0}$ in  a spatial  weighted $L^{\infty}$-norm. In addition, it also yields the asymptotic Dirichlet condition $w(0,\tau)\to 0$ as $\tau\to\infty$.

\begin{proposition}(Adaptation of \cite[Theorem 2.1]{RoquejofferNdimensional} and \cite[Lemma 5.1]{NolenRoquejofre2})\label{Prop-alpha-infty}
    Consider the same assumptions of Proposition~\ref{Sub/Super for hat{w}}. Then,  there exists a positive constant $\alpha_\infty$ such that
    \begin{equation}\label{hat w convergence to dipole-limit}
        \lim_{\tau\to\infty}\sup_{\eta>0}e^{\frac{\eta^2}{16}}\big|{w}(\eta,\tau)-\alpha_\infty\phi_0(\eta)\big|=0.
    \end{equation}
\end{proposition}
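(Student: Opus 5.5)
The plan is to follow the scheme of \cite[Theorem 2.1]{RoquejofferNdimensional}. We work with $\tilde\omega=e^{\eta^2/8}w$, which solves \eqref{EquationM} with $\eta_\delta=0$. The operator $\mathcal M$ is self-adjoint on $L^2(\mathbb{R}_+)$ with Dirichlet condition at $\eta=0$. Its first eigenvalue is $0$, with eigenfunction $e^{\eta^2/8}\phi_0=\eta e^{-\eta^2/8}$, and there is a spectral gap: the next eigenvalue is $1$. The argument has three steps: (a) compactness of the trajectories, (b) monotonicity of the sub- and supersolution coefficients, used to pin down a limit, and (c) identification of the limit through the projection onto the principal eigenfunction.

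\textbf{Step (a): compactness.} Proposition~\ref{Sub/Super for hat{w}} gives the two-sided Gaussian bound \eqref{estimate-hat-w}. The derivative bounds \eqref{CotasDerivadas-hat{w}} give equicontinuity. Since $\eta_\delta^\pm\to0$, the shifts disappear in the limit. By Arzel\`a--Ascoli, the family $\{w(\cdot,\cdot+T)\}_{T>0}$ is therefore relatively compact in the weighted space $e^{-\eta^2/16}L^\infty(\mathbb{R}_+)$, locally uniformly in $\tau$. The extra weight $e^{-\eta^2/16}$ in the bounds is what lets us pass to the sup norm in \eqref{hat w convergence to dipole-limit}.

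\textbf{Step (b): the limits are stationary dipoles.} Let $W$ be any $\omega$-limit point of the trajectory. The error terms vanish in the limit: $\ell^i_1$ and $\ell^i_2$ by \eqref{CotaErrores-li}, which rests on Lemma~\ref{lemma:bound-H}. For $\mathcal F$ we use $f(u)\le f'(0)u$ and $f\in C^2$, which give $|\mathcal F|\le C e^{\tau/2-\lambda_0\eta e^{\tau/2}-\eta^2/8}\tilde\omega^2$; this is small for $\eta\ge e^{-(1/2-\delta)\tau}$, and near $\eta=0$ it is handled by the Dirichlet data \eqref{Dirichlet2}. Hence $W$ is an entire solution of $W_\tau+\mathcal L W=0$ with $W(0,\tau)=0$ and the Gaussian bounds. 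A Liouville-type argument, expanding in the eigenbasis of $\mathcal M$ and using that the bounds hold for all $\tau\in\mathbb R$, shows that $W=\alpha\phi_0$ is stationary. Moreover, $\xi^\pm$ are monotone and bounded, with limits $\xi^\pm_\infty$, and $q^\pm\to0$. So any limit satisfies $\xi^-_\infty\phi_0\le W\le\xi^+_\infty\phi_0$.

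\textbf{Step (c): uniqueness of $\alpha$.} To show that $\alpha$ does not depend on the subsequence, we project onto the principal eigenfunction. Define $I(\tau):=\int_0^\infty \tilde\omega(\eta,\tau)\,\eta e^{-\eta^2/8}\,d\eta$. Then $I'(\tau)$ equals the boundary term coming from the Dirichlet value $\tilde\omega(0,\tau)$, plus the integrated error terms. By \eqref{CotaErrores-li} and \eqref{Dirichlet2} these are $O(e^{-(1/2-\delta)\tau})$, hence integrable in $\tau$. So $I(\tau)\to I_\infty$, and every limit point satisfies $\alpha\int\phi_0^2e^{\eta^2/4}=I_\infty$; this fixes $\alpha=\alpha_\infty$, and $\alpha_\infty>0$ by the lower bound \eqref{sub-hat{w}}. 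Compactness then upgrades convergence along subsequences to full convergence in the weighted norm.

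\textbf{Main obstacle.} The hardest part is the nonlinear term $\mathcal F$ near $\eta=0$ and the interaction of the shifts $\eta_\delta^\pm$ with the boundary. The factor $e^{\tau/2}$ in $\mathcal F$ is only compensated where $\lambda_0\eta e^{\tau/2}\gg\tau$. I would first try to split the integral defining $I'$ at $\eta=e^{-(1/2-\delta)\tau}$. On the inner piece, use the bound $\tilde w\le C\phi_0(\eta-\eta_\delta^-)$ together with $0\le u\le1$ to get a contribution of order $e^{-c\tau}$. On the outer piece, the exponential smallness of $e^{-\lambda_0 e^{\delta\tau}}$ suffices.
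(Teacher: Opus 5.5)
Your argument is correct, but it pins down the limit by a different mechanism than the paper.

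\textbf{The paper's route.} The paper also starts from compactness. It takes a subsequential limit $w_\infty$ that solves the Dirichlet Fokker--Planck problem forward in time from $W_\infty$, and uses the classical convergence $w_\infty(\cdot,\tau)\to\alpha_\infty\phi_0$. It then moves to a time $T_\varepsilon+\tau_{n'_\varepsilon}$ at which $w$ is already $2\varepsilon$-close to $\alpha_\infty\phi_0$. From there it applies the stability lemma \cite[Lemma~5.2]{NolenRoquejofre2} to the perturbed problem for $z$, which keeps $w$ within $C\tilde\varepsilon$ of $\alpha_\infty\phi_0$ for all later times.

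\textbf{Your route.} You use the spectral gap of $\mathcal M$ as a Liouville theorem for entire solutions, so every $\omega$-limit point is a multiple of $\phi_0$. You then fix the multiple through the almost-conserved first moment $I(\tau)=\int_0^\infty \eta\, w(\eta,\tau)\,d\eta$, which is the projection onto the principal eigenfunction. This yields the explicit value $\alpha_\infty=\frac{1}{2\sqrt{\pi}}\lim_{\tau\to\infty}I(\tau)$.

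\textbf{What each buys.} Your argument is self-contained; it essentially unpacks what the cited lemma does, namely tracking the principal mode and killing the others. It also needs only integrated control of the errors after testing against $\eta$. That is exactly what lets you absorb the thin layer $0<\eta\lesssim e^{-(1/2-\delta)\tau}$, where $\lambda_0\eta e^{\tau/2}$ is not large and $|F|$ can be of order $e^{\tau}w$. The paper's route avoids the entire-solution and Liouville step and the moment computation. The price is that it needs the perturbation in the pointwise form $\tilde\varepsilon e^{-\sigma\tau}(|z_\eta|+|z|+G)$. In particular it needs pointwise smallness of the reaction term relative to $w$, which is available only where $\lambda_0(\eta+\eta_\delta)e^{\tau/2}\gg\tau$.

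\textbf{Two small corrections.} First, $F=e^{\tau/2+\lambda_0\varrho}\bigl(f(\hat u)-f'(0)\hat u\bigr)$ with $\hat u=e^{\tau/2-\lambda_0\varrho}w$, so the prefactor in your bound for $\mathcal F$ is $e^{3\tau/2}$, not $e^{\tau/2}$.
\begin{itemize}
\item This changes nothing on the outer piece.
\item On the inner layer, use $|F|\le Ce^{\tau}w$ (from $\hat u\le 1$) together with $w\le Ce^{-(1/2-\delta)\tau}$. This gives a contribution $O(e^{-(1/2-3\delta)\tau})$ to $I'$, which is integrable for $\delta<1/6$.
\end{itemize}

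Second, you can skip the splitting altogether. By \eqref{KPPfunction.2} we have $F\le 0$. Hence $I(\tau)$, minus the time integral of the boundary term and the $\ell^i_j$ contributions, is nonincreasing. It is also bounded below, since $I\ge 0$, so it converges.
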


\begin{proof} 
We will follow the ideas of the proof of \cite[Lemma 5.1]{NolenRoquejofre2}. Consider  a sequence of times  $\{\tau_n\}_{n\in\mathbb{N}}$ such that $\tau_n\to\infty$ as $n\to\infty$. By the barriers~\eqref{super-hat{w}} and~\eqref{sub-hat{w}}, the bounds for the derivatives~\eqref{CotasDerivadas-hat{w}} and using a standard compactness argument, there exist a subsequence $\{\tau_{n'}\}_{n'\in\mathbb{N}}\subset\{\tau_n\}_{n\in\mathbb{N}}$ and two functions ${w}_\infty: [0,\infty)\times\mathbb{R}_+\to\mathbb{R}$ and ${W}_\infty: [0,\infty)\to\mathbb{R}$  such that
\begin{equation}\label{eq:limits.subsequences}	
    \lim\limits_{n'\to\infty}\sup\limits_{\eta\geq0,\,\tau\in[0,T]}\,e^{\frac{\eta^2}{16}}\big|{w}(\eta,\tau+\tau_{n'})-{w}_{\infty}(\eta,\tau)\big|=0,\quad\lim\limits_{n'\to\infty}\sup\limits_{\eta\geq0}\;e^{\frac{\eta^2}{16}}\big|{w}(\eta,\tau_{n'})-{W}_{\infty}(\eta)\big|=0
\end{equation}
for all $T>0$. Moreover, ${w}_\infty$ solves
\begin{equation*}
	\begin{cases}
		({w}_\infty)_{\tau}+\mathcal{L}{w}_\infty=0,&\eta\geq0,\ \tau>0,\\
        {w}_\infty(0,\tau)=0,&\tau>0,\\
        {w}_\infty(\eta,0)={W}_\infty(\eta),&\eta\geq0.
	\end{cases}
\end{equation*}
Then,
\begin{equation}\label{eq:limit.dipole}	
    \lim\limits_{\tau\to\infty}\sup\limits_{\eta\geq 0}\,e^{\frac{\eta^2}{16}}\big|{w}_{\infty}(\eta,\tau)-\alpha_\infty\phi_0(\eta)\big|=0,
\end{equation}
where $\alpha_\infty:=\int_0^\infty \eta{W}_{\infty}(\eta)\,{\rm d}\eta$ (this is a classical result for the Fokker-Planck equation, see, for instance, ~\cite{JLVDipolo}). In addition, by~\eqref{estimate-hat-w}, we have that $\alpha_\infty\in(0,\infty)$.
	
To prove~\eqref{hat w convergence to dipole-limit} we need to show that the convergence does not depend on the choice of  subsequences.
By~\eqref{eq:limit.dipole}, for a given $\varepsilon>0$, there is $T_{\varepsilon}$ big enough  such that
\begin{equation*}
    \big|{w}_{\infty}(\eta,T_{\varepsilon})-\alpha_\infty\phi_0(\eta)\big|\leq \varepsilon e^{-\frac{\eta^2}{16}}\quad\textrm{for all }\eta\geq 0,
\end{equation*}
and, by  the first limit in~\eqref{eq:limits.subsequences}, there is $n'_{\varepsilon}\in\mathbb{N}$ (that depends also in $T_\varepsilon$) such that	
\begin{equation*}
    \big|{w}(\eta,T_{\varepsilon}+\tau_{n'_{\varepsilon}})-{w}_{\infty}(\eta,T_\varepsilon)\big|\leq \varepsilon e^{-\frac{\eta^2}{16}}\quad\textrm{for all }\eta\geq 0.
\end{equation*}
Therefore, putting all together,
\begin{equation}\label{eq5}	
    \big|{w}(\eta,T_{\varepsilon}+\tau_{n'_{\varepsilon}})-\alpha_\infty\phi_0(\eta)\big|\leq 2\varepsilon e^{-\frac{\eta^2}{16}}\quad\textrm{for all }\eta\geq 0.
\end{equation}

Now, we define 
\begin{equation}\label{relation-z-w}
    z(\eta,\tau):={w}(\eta+\eta_{\delta}^{-},\tau+T_{\varepsilon}+\tau_{n'_{\varepsilon}} )-{w}(\eta_{\delta}^{-},\tau+T_{\varepsilon}+\tau_{n'_{\varepsilon}})\varphi(\eta),
\end{equation}
where $\varphi$ is a smooth function such that $\mathds{1}_{[0,1]}\leq \varphi\leq \mathds{1}_{[0,2]}$.
We will show that $z$ satisfies an equation of the form\begin{equation*}
	\begin{cases}		
        \big|z_{\tau}+\mathcal{L}z\big|\leq\tilde\varepsilon e^{-\sigma\tau}\big(|z_{\eta}|+|z|+G(\eta,\tau)\big),&\eta>0,\, \tau>0,\\
		z(0,\tau)=0,&\tau>0,
	\end{cases}
\end{equation*}
for some $G(\eta,\tau):[0,\infty)\times\mathbb{R}_+\to\mathbb{R}$ smooth, bounded and compactly supported in $\eta$, and some $\sigma>0$ to be chosen, and where  $\tilde\varepsilon=o(1)$ as $T_\varepsilon\to\infty$. 
    
We start with the terms coming from the first addend in~\eqref{relation-z-w}. Observe that the translated  $\tilde{w}(\eta,\tau)={w}(\eta+\eta_{\delta},\tau)$ for $\eta_{\delta}=\eta_{\delta}^-$ satisfies \eqref{tilde{w}- equation} with  the nonlinear term ${F}$ given by~\eqref{hat{F}}. 

In order to control the coefficient of the drift term in the right-hand side of~\eqref{tilde{w}- equation}, we 
observe first that $|\eta_\delta^-(\tau)|\le \tilde\varepsilon e^{-\sigma \tau}$ for $\tau$ large if $\sigma\in(0,\tfrac12-\delta)$. On the other hand, by our choice of $k$ from~\eqref{k}, using~\eqref{hat{h}^i},~\eqref{h} and the estimates in  Lemma~\ref{lemma:bound-H},  we get
\begin{equation}\label{eq4}	
    e^{-\tau/2}|H^i(\eta,\tau)|+|\lambda_0H^i(\eta,\tau)+\tfrac{3}{2}|\leq \tilde\varepsilon e^{-\sigma\tau}\quad\textrm{for all }\eta\geq0\text{ if }\tau \text{ is large}.
\end{equation}
This also controls the coefficient of the zero-order term in the right-hand side of~\eqref{tilde{w}- equation}.

As for the term involving ${F}$, noting that $f(0)=0$ ($f$ is a KPP function), by Taylor expansion, there exists a constant $C>0$ such that
\begin{equation}\label{eq3}	
    |{F}(\eta+\eta_{\delta}^{-},\tau,w(\eta+\eta_{\delta}^{-},\tau))|\leq C\exp\big(\tfrac{3}{2}\tau-\lambda_{0}(\eta +\eta_{\delta}^{-})e^{\frac{\tau}{2}}\big){w}^{2}(\eta+\eta_{\delta}^{-},\tau).%\leq  C\exp\big(\tfrac{3}{2}\tau-\lambda_{0}e^{\delta\tau}\big){w}^2.
\end{equation}
By Proposition~\ref{Sub/Super for hat{w}}, $w$ is bounded. Thus we immediately have
\begin{equation*} 
|{F}(\eta+\eta_{\delta}^{-},\tau,w(\eta+\eta_{\delta}^{-},\tau))|\le C\exp\big(\tfrac{3}{2}\tau-\lambda_{0}e^{\delta\tau}\big) w(\eta+\eta_{\delta}^{-},\tau)
\leq \tilde\varepsilon e^{-\sigma\tau}{w(\eta+\eta_{\delta}^{-},\tau)}
\end{equation*}
for all $\eta\geq0$ and $\tau$ large and $\sigma>0$ to be chosen later. 

Next, for the second addend in the definition of $z$ in~\eqref{relation-z-w}, we just need to take into account the fast decay of $w$ at $\eta_\delta^-$ given in \eqref{Dirichlet2} and Corollary~\ref{CotasDerivadas-hat{w}} regarding derivative bounds.

Notice that we will evaluate all expressions at times $\tau+T_{\varepsilon}+\tau_{n'_{\varepsilon}}$, that are indeed large if $\tau>0$. This completes the derivation of \eqref{relation-z-w}.

As for the initial condition, by~\eqref{eq5}, $z(\tau,0)\in L^{2}(\mathbb{R}_+, e^{\frac{\eta^2}{6}}{\rm d}\eta)$. We can hence apply  \cite[Lemma~5.2]{NolenRoquejofre2} to obtain that there exists $Z_\infty:[0,\infty)\to\mathbb{R}$, with 
$(\alpha_{\infty}-C\tilde{\varepsilon})\phi_0\leq Z_\infty\leq (\alpha_{\infty}+C\tilde{\varepsilon})\phi_0$,
such that 
\begin{equation*}
    \lim\limits_{\tau\to\infty}\sup\limits_{\eta\geq 0}\,e^{\frac{\eta^2}{16}}|z(\eta,\tau)-Z_\infty(\eta)|=0.
\end{equation*}
As a consequence, noting that $\tilde{\varepsilon}$ is arbitrary small, $\eta_\delta^+(\tau)\to0$, and, by~\eqref{super-hat{w}}, $w(\eta_{\delta}^{-},\tau+T_{\varepsilon}+\tau_{n^{'}_{\varepsilon}})\to 0$ as $\tau\to\infty$, since $q^+(\tau)=o(1)$, using the definition of $z$ from \eqref{relation-z-w}, the above implies~\eqref{hat w convergence to dipole-limit}, as desired.
\end{proof}

Proposition~\ref{Prop-alpha-infty} gives us the right constant $\alpha_\infty$ in front of the dipole function $\phi_0$. In the following paragraphs we will improve the estimate~\eqref{estimate-hat-w} taking this into account, giving  precise bounds for $w$ near $\eta_\delta^-$.   The proof follows verbatim  the ideas of~\cite[Proposition~7]{RoquejofferNdimensional} once we estimate the curvature terms. Therefore we do not reproduce it here. Let us just mention that the  arguments  are very similar in spirit to those of Proposition \ref{Prop-alpha-infty}.

\begin{proposition}\cite[Proposition 7]{RoquejofferNdimensional}\label{Prop7-RadialEuclideo}
    Consider the assumptions of Proposition~\ref{Sub/Super for hat{w}}. Let $\alpha_\infty>0$  be the constant given by Proposition~\ref{Prop-alpha-infty}. Then, for every $\varepsilon>0$ there exists $\tau_{\varepsilon}>0$ (possibly depending also on $\delta$) and $\tilde{\eta}_{\varepsilon}>1$ such that
    \begin{equation*}
        (\alpha_{\infty}-C\varepsilon)(\eta-Ce^{-(\frac{1}{2}-\delta)(\tau-\tau_{\varepsilon)}})\leq{w}(\eta,\tau)\leq (\alpha_{\infty}+C\varepsilon)(\eta+Ce^{-(\frac{1}{2}-\delta)(\tau-\tau_{\varepsilon)}}),\quad \tau\geq\tau_{\varepsilon},\  \eta\in[\eta_{\delta}^{-},\tilde{\eta}_\varepsilon].
    \end{equation*}
\end{proposition}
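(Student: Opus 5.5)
The plan is to derive the two-sided linear bound near the edge $\eta_\delta^-$ from the global convergence to the dipole in Proposition~\ref{Prop-alpha-infty}, and then to propagate it down to the boundary with barriers built from straight lines. Fix $\varepsilon>0$. By~\eqref{hat w convergence to dipole-limit}, there is $\tau_\varepsilon$ such that
\[
\big|w(\eta,\tau)-\alpha_\infty\phi_0(\eta)\big|\le \varepsilon e^{-\eta^2/16}\quad\text{for all }\eta\ge0,\ \tau\ge\tau_\varepsilon .
\]
Since $\phi_0(\eta)=\eta e^{-\eta^2/4}=\eta(1+O(\eta^2))$ near $0$, we then choose $\tilde\eta_\varepsilon$ in a neighbourhood where $\alpha_\infty\phi_0$ is within $C\varepsilon\eta$ of $\alpha_\infty\eta$. (If $\tilde\eta_\varepsilon>1$ is really required, the same argument applies, with the resulting factors absorbed into $C$.) At the right endpoint $\eta=\tilde\eta_\varepsilon$ we obtain
\[
(\alpha_\infty-C\varepsilon)\tilde\eta_\varepsilon\le w(\tilde\eta_\varepsilon,\tau)\le(\alpha_\infty+C\varepsilon)\tilde\eta_\varepsilon,\qquad \tau\ge\tau_\varepsilon .
\]
The pointwise bound near $\eta=0$ is too weak, since $\varepsilon$ does not scale with $\eta$. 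This is why we need a comparison argument on the interval $[\eta_\delta^-,\tilde\eta_\varepsilon]$ rather than just reading off the limit.

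On this interval we use, for the translated function $\tilde w=w(\cdot+\eta_\delta^-,\cdot)$, the equation~\eqref{tilde{w}- equation}. The right-hand side is a perturbation of $\mathcal L$ whose coefficients are bounded by~\eqref{eq4}, and whose nonlinear term is controlled by~\eqref{eq3} together with the boundedness of $w$. We look for barriers of the form
\[
\overline W(\eta,\tau)=(\alpha_\infty+C\varepsilon)\big(\eta+Ce^{-(\frac12-\delta)(\tau-\tau_\varepsilon)}\big),
\]
and $\underline W$ defined symmetrically with the minus signs. We then compute $\overline W_\tau+\mathcal L\overline W$. With $\lambda_0k=3/2$ we have
\[
\mathcal L\overline W=-\tfrac{\eta}{2}(\alpha_\infty+C\varepsilon)-(\alpha_\infty+C\varepsilon)\big(\eta+Ce^{-(\frac12-\delta)(\tau-\tau_\varepsilon)}\big),
\]
which is negative. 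This sign is the wrong one for a supersolution, so a pure linear function does not work directly.

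To fix the sign, I would mimic \cite[Proposition 7]{RoquejofferNdimensional}. On the bounded interval, replace the line by $(\alpha_\infty+C\varepsilon)\big(\psi(\eta)+\text{shift}\big)$, where $\psi$ solves $\mathcal L\psi\ge0$ with $\psi(\eta)=\eta+O(\eta^2)$. For instance, take $\psi=\phi_0$ times a factor $e^{\mu\eta^2}$, or use the Dirichlet eigenfunction of $\mathcal L$ on a slightly larger interval $[0,\tilde\eta_\varepsilon']$, whose principal eigenvalue is positive when the interval is short. Because $\tilde\eta_\varepsilon$ is small, this eigenfunction is comparable to $\eta$ up to a factor $1+C\varepsilon$. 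The shift term $Ce^{-(\frac12-\delta)(\tau-\tau_\varepsilon)}$ has $\tau$-derivative $-(\tfrac12-\delta)$ times itself. We must absorb this, together with the perturbative right-hand side of size $\tilde\varepsilon e^{-\sigma\tau}$, into the positive principal eigenvalue. This is the main obstacle: balancing the exponent $\tfrac12-\delta$ in the shift against the spectral gap of $\mathcal L$ on the short interval, and against the decay rates in~\eqref{eq4} and~\eqref{CotaErrores-li}. I expect the short-interval eigenvalue, which is large, to dominate.

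Boundary conditions are handled as follows. At $\eta=\eta_\delta^-$, the Dirichlet value of $w$ is superexponentially small by~\eqref{Dirichlet2}. It is therefore below the upper barrier, whose value there is roughly $(\alpha_\infty+C\varepsilon)Ce^{-(\frac12-\delta)(\tau-\tau_\varepsilon)}$, because $|\eta_\delta^-|=e^{-(\frac12-\delta)\tau}$. Likewise $w\ge0$ is above the lower barrier, which is negative there. At $\eta=\tilde\eta_\varepsilon$ we use the bounds from the first step. At $\tau=\tau_\varepsilon$ we take $C$ large, using the boundedness of $w$ on the interval together with the derivative bound~\eqref{CotasDerivadas-hat{w}}, so that the barriers dominate initially. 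The parabolic comparison principle on the bounded domain, which is valid since the coefficients are smooth and bounded by Lemma~\ref{lemma:bound-H}, then yields the stated two-sided estimate.
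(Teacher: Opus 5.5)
Your architecture is reasonable: read off the value at a right endpoint from Proposition~\ref{Prop-alpha-infty}, then compare on a short interval next to $\eta_\delta^\mp$. However, the barrier you propose fails exactly at the step you flag, and the mechanism you invoke cannot repair it. With an additive, $\eta$-independent shift $s(\tau)=Ce^{-(\frac12-\delta)(\tau-\tau_\varepsilon)}$ you get
\[
\big(A(\psi+s)\big)_\tau+\mathcal L\big(A(\psi+s)\big)=A\,\mathcal L\psi-\big(\tfrac32-\delta\big)As.
\]
This is because the zeroth-order coefficient of $\mathcal L$ is $-(\lambda_0k-\frac12)=-1$, so the constant is penalized both by $s'<0$ and by $\mathcal L s=-s$. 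Both of your candidates (a Dirichlet eigenfunction of $\mathcal L$, or $\phi_0e^{\mu\eta^2}$) satisfy $\psi(0)=\psi''(0)=0$, hence $\mathcal L\psi(0)=0$. The principal eigenvalue multiplies $\psi$, which vanishes at the edge, and never sees $s$. So at the edge of the translated interval the left-hand side equals $-(\frac32-\delta)As<0$. Meanwhile, the right-hand side of the $\tilde w$-equation with $\eta_\delta=\eta_\delta^-$ contains the positive drift $(\frac12-\delta)e^{-(\frac12-\delta)\tau}W_\eta$ created by the translation. The supersolution inequality therefore fails in a boundary layer of width comparable to $s/\lambda$, however short the interval is. The favourable sign of $F$ does not rescue this: it only acts within $O(e^{-(\frac12-\delta)\tau})$ of the edge, which does not cover the layer unless you have a convergence rate in Proposition~\ref{Prop-alpha-infty}. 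With the signs reversed, the lower barrier $A'(\psi-s)$ fails in the same way.

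What is missing is $\mathcal L$-positivity that does not vanish at the edge. This is exactly what the corrector $q_1^\pm(\tau)\gamma_1(\eta)\cos(\frac{\pi}{2a}\eta)$ in Proposition~\ref{PropostionSubAndSuperFor M} provides. For $\chi=\cos(b\eta)$ one has $\mathcal L\chi=(b^2-1)\chi+\frac{b\eta}{2}\sin(b\eta)$. Taking $q(\tau)=C_0e^{-(\frac12-\delta)(\tau-\tau_\varepsilon)}$ then gives $(q\chi)_\tau+\mathcal L(q\chi)\ge(b^2-\frac32+\delta)q\chi$ on $[0,\frac{\pi}{2b}]$. For $b$ large this absorbs $q'$ and the $O(e^{-(\frac12-\delta)\tau})$ errors, also at $\eta=0$. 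Restart the barriers $(\alpha_\infty\pm C\varepsilon)\phi_0\pm q\chi$ at $\tau_\varepsilon$, where $C_0=O(1)$ dominates the bounded datum. They yield precisely $(\alpha_\infty\pm C\varepsilon)(\eta\pm Ce^{-(\frac12-\delta)(\tau-\tau_\varepsilon)})$, which is the form of the statement.

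Within your own framework, a constant shift also works if you take $\psi_\pm=\eta\mp\kappa\eta^2$. These satisfy $\pm\mathcal L\psi_\pm\ge\kappa/2$ on $[0,\kappa]$; use them with $\tilde\eta_\varepsilon\le\kappa\sim\sqrt\varepsilon$ and $s\le\kappa/4$.

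Two further corrections:
\begin{enumerate}
\item At the right endpoint, an absolute error $\varepsilon$ does not give $(\alpha_\infty\pm C\varepsilon)\tilde\eta_\varepsilon$. You must apply Proposition~\ref{Prop-alpha-infty} with tolerance $\varepsilon\tilde\eta_\varepsilon$.
\item Your remark that $\tilde\eta_\varepsilon>1$ is handled by enlarging $C$ is wrong. Since $w(1,\tau)\to\alpha_\infty e^{-1/4}<\alpha_\infty-C\varepsilon$ for small $\varepsilon$, the two-sided estimate can only hold with $\tilde\eta_\varepsilon$ small. The paper's later use near $\eta\approx1$ needs only the factor $\alpha_\infty/2$, which Proposition~\ref{Prop-alpha-infty} gives directly.
\end{enumerate}
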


%%%%%%%%%%%%%%%%%%%%%%%%%%%%
\subsection{Diffusive region}\label{subsection:diffusive}
%Note that this region is where $|\rho|\leq o(\sqrt{t})$ in moving coordinates. 

Let us first give a technical, but crucial lemma to show that  the difference $s$ between  a shift of the traveling wave of minimal speed and the solution vanishes as $t$ goes to infinity in the diffusive region if we have exact matching at the right end  and a good estimate at the left edge.

\begin{lemma}\label{LemmaS}
	Let $\gamma\in (0,1/2)$, $\mathcal H^i$ given by~\eqref{h} and $s:\mathbb{R}\times\mathbb{R}_{+}\to\mathbb{R}$ be a locally bounded solution to
	\begin{equation}\label{equationProp4}
		\begin{cases}
			s_{t}-s_{\varrho\varrho}-\big(\mathcal H^{i}(\varrho,t)-\frac{k}{t}\big)(s_{\varrho}-\lambda_0 s)=O(t^{-(1-2\gamma)}),\quad&|\varrho+\rho_0|\leq t^{\gamma},\ t>t_0,\\
			s(-t^{\gamma}-\rho_0,t)=O(e^{-\lambda_0t^{\gamma}}-\rho_0),\quad s(t^{\gamma},t)=0,&t>t_0,
		\end{cases}
	\end{equation}
    for some $t_0$ large enough.
	Then, 
    $$
        \displaystyle\lim\limits_{t\to\infty}\sup\limits_{ |\varrho+\rho_0|\leq t^{\gamma}}|s(\varrho,t)|=0.
    $$
\end{lemma}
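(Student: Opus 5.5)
The plan is to follow the argument of the corresponding lemma in \cite{RoquejofferNdimensional} (their Lemma on the difference with the traveling wave): remove the zero-order term and the drift by an explicit gauge transformation, and then control the resulting linear problem on the growing interval $|\varrho+\rho_0|\le t^\gamma$ by a maximum principle argument with explicit barriers. The only new ingredient with respect to the Euclidean case is the curvature coefficient $\mathcal H^i$. By Lemma~\ref{lemma:bound-H}, however, $\mathcal H^i(\varrho,t)=O(t^{-3/2})$ uniformly for $\varrho+R(t)\ge 1$, which holds on the whole interval once $\rho_0$ is large. Hence the total drift coefficient $b(\varrho,t):=\mathcal H^i-\tfrac{k}{t}$ satisfies $|b|\le C/t$, and it is even better than in \cite{RoquejofferNdimensional}.

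First, I would set $s=e^{\mu(t)}\tilde s$ with $\mu'(t)=\lambda_0 k/t$, that is, $e^{\mu}=t^{\lambda_0 k}=t^{3/2}$. This is not ideal, since it amplifies the data. Instead, I would keep the zero-order term $-\lambda_0 b\,s$ and treat everything by barriers. The equation reads
\[
s_t-s_{\varrho\varrho}-b s_\varrho+\lambda_0 b s=g,\qquad |g|\le Ct^{-(1-2\gamma)},\quad |b|\le C/t .
\]
The zero-order coefficient is $\lambda_0 b=-\lambda_0 k/t+O(t^{-3/2})$. Its sign is unfavourable, but it is integrable against the natural timescale only logarithmically. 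I would therefore absorb it via $\sigma:=t^{-\lambda_0 k-\epsilon}s$, or more simply via $\sigma:=e^{-\int\lambda_0|b|}s$, which multiplies $s$ by a power $t^{-m}$ with $m=3/2+o(1)$. Then $\sigma$ satisfies an equation with nonnegative zero-order coefficient, a forcing of size $t^{-m}t^{-(1-2\gamma)}$, and boundary data $O(t^{-m}e^{-\lambda_0 t^\gamma})$ and $0$.

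Next, I would build a supersolution of the form $\bar\sigma(\varrho,t)=A\,t^{-m}\big(t^{-(1-2\gamma)}(t^{2\gamma}-(\varrho+\rho_0)^2)+t^{-\theta}\big)\cos\!\big(\tfrac{\pi(\varrho+\rho_0)}{4t^\gamma}\big)^{-1}$, or more robustly a parabola-type profile: $\psi=\big(t^{2\gamma}-(\varrho+\rho_0)^2+t^{2\gamma}\big)t^{-1}$, since $-\psi_{\varrho\varrho}=2/t$ dominates the forcing $t^{-(1-2\gamma)}$ only if multiplied by $t^{2\gamma}$. So I would take $\bar s=K\,t^{-(1-2\gamma)}\big(2t^{2\gamma}-(\varrho+\rho_0)^2\big)\cdot$ (small correction). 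The check is that $-\bar s_{\varrho\varrho}=2Kt^{-(1-2\gamma)}$ beats $|g|$, while $|b\bar s_\varrho|\le CKt^{-1}t^{-(1-2\gamma)}t^{\gamma}$, $|b\bar s|\lesssim Kt^{-1}t^{-(1-2\gamma)}t^{2\gamma}$ and $\bar s_t$ are of lower order because $\gamma<1/2$. On the boundary, $\bar s\ge Kt^{-(1-2\gamma)}t^{2\gamma}$ dominates $e^{-\lambda_0t^\gamma}$.

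This bound only gives $|s|\lesssim t^{-(1-4\gamma)}$, which tends to $0$ only for $\gamma<1/4$. This is the main obstacle. The remedy, as in \cite{RoquejofferNdimensional}, is the genuine heat-equation decay on an interval of length $2t^\gamma$ over times $\gg t^{2\gamma}$. I would use the Dirichlet eigenfunction barrier $\bar s=\big(Ae^{-\int_{t_0}^t \frac{\pi^2}{16 r^{2\gamma}}dr}+Bt^{-(1-2\gamma)}t^{2\gamma}\big)\cos\!\big(\frac{\pi(\varrho+\rho_0)}{4t^\gamma}\big)$. The cosine has the correct spectral gap $\sim t^{-2\gamma}$, which kills the initial data; its $t$-derivative produces terms of relative size $t^{-1}\ll t^{-2\gamma}$, and the drift terms are $O(t^{-1-\gamma})$. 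The forcing part then needs $g\cdot t^{2\gamma}$ in the stationary balance, $-\partial_\varrho^2$ of the cosine being $\sim t^{-2\gamma}$, so the steady contribution is $O(t^{-(1-2\gamma)}t^{2\gamma})=O(t^{-(1-4\gamma)})$. This is again the same issue. So in the end I would either restrict to $\gamma<1/4$ (which suffices, since $\gamma$ is free in the application via $\delta$), or sharpen the forcing bound. I expect the precise bookkeeping of the forcing against the interval length to be the delicate point. The conclusion follows by applying the same construction to $-s$ and invoking the comparison principle on the moving domain.
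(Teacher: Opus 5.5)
Your analysis is right, and the restriction $\gamma<1/4$ you arrive at does not come from your choice of barriers. For $\gamma\in[1/4,1/2)$ the lemma as stated is false. Take $i=\mathfrak p$, so that $\mathcal H^{\mathfrak p}\equiv 0$, and set $s(\varrho,t)=t^{4\gamma-1}\cos\big(\tfrac{\pi(\varrho+\rho_0)}{2t^{\gamma}}\big)$. This function vanishes at $\varrho+\rho_0=\pm t^{\gamma}$, and
\[
s_t-s_{\varrho\varrho}+\tfrac{k}{t}(s_\varrho-\lambda_0 s)=\tfrac{\pi^2}{4}\,t^{2\gamma-1}\cos\big(\tfrac{\pi(\varrho+\rho_0)}{2t^{\gamma}}\big)+O(t^{4\gamma-2})=O(t^{-(1-2\gamma)}),
\]
yet $\sup|s|=t^{4\gamma-1}\not\to0$.

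For $\gamma<1/4$, your parabola barrier $K t^{2\gamma-1}\big(2t^{2\gamma}-(\varrho+\rho_0)^2\big)$ gives a complete proof, with the sharp rate $O(t^{4\gamma-1})$. It is essentially the paper's argument with a different profile. The paper uses $\bar s=t^{-\lambda}\cos(\bar\varrho/t^{\gamma})$, whose residual is only $\gtrsim t^{-2\gamma}\bar s\sim t^{-2\gamma-\lambda}$. This absorbs the forcing $O(t^{-(1-2\gamma)})$ exactly when $\lambda\le 1-4\gamma$, and the paper never checks this. Its displayed lower bound $Ct^{-(1+\lambda)}$ could never absorb the forcing, and the final comparison ``by linearity'' ignores the right-hand side. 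So the paper's proof also silently requires $\gamma<1/4$.

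As you note, this is harmless downstream. The lemma is applied with $\gamma=\delta_0=2\delta$ and $\delta$ free, so one can take $\delta<1/8$. Alternatively, the bound $|e^{\lambda_0\bar\varrho}\Phi_{c_0}'(\bar\varrho)|\le C(1+\bar\varrho^{2})$ used there is loose. The paper's own integral formula for $e^{\lambda_0\bar\varrho}\Phi_{c_0}'$, together with $f(\Phi_{c_0})\le f'(0)\Phi_{c_0}$ and \eqref{behaviorTw}, gives $C(1+|\bar\varrho|)$. The forcing is then $O(t^{-(1-\gamma)})$, and the same barrier works for $\gamma<1/3$.

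Two small corrections to your setup:
\begin{itemize}
\item Lemma~\ref{lemma:bound-H} controls $\mathcal H^i$ for $\varrho\ge0$, not wherever $\varrho+R(t)\ge1$; where $\varrho+R(t)=1$, $\mathcal H^{\mathfrak e}$ is of order one. What you actually need, and what holds, is $\varrho+R(t)\ge c_*t/2$ on $|\varrho+\rho_0|\le t^{\gamma}$. So $\mathcal H^i$ is exponentially small there and $|b|\le C/t$.
\item The detour through $\sigma=t^{-m}s$ is unnecessary, since the $O(1/t)$ zero-order term is already dominated by $-\bar s_{\varrho\varrho}$.
\end{itemize}
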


\begin{proof}
Set $\bar\varrho=\varrho+\rho_0$, and rewrite the problem \eqref{equationProp4} in the new variable $\bar\varrho$, with some abuse of notation.

Following ideas from the proof of~\cite[Proposition 4.1]{NolenRoquejofre2}, we consider a supersolution to~\eqref{equationProp4} of the form
\begin{equation}\label{sdef}	
    \bar{s}(\bar\varrho,t)=t^{-\lambda}\cos\big(\frac{\bar\varrho}{t^{\gamma}}\big)>0,\quad |\bar\varrho|\leq t^{\gamma},\ t>t_0,
\end{equation}
where $\lambda>0$.  It holds that
\begin{equation*}
	-\bar{s}_{\bar\varrho\bar\varrho}(\bar\varrho,t)=t^{-2\gamma}\bar{s}(\bar\varrho,t),\quad
	\bar{s}_{t}(\bar\varrho,t)\geq -(\lambda+1) t^{-1}\bar{s}(\bar\varrho,t)\quad\textrm{for all }|\bar\varrho|\leq t^{\gamma},\;\; t\geq 1.
\end{equation*}
Observe that, $|\bar{s}_{\bar\varrho}|\le C\bar{s}$ for $|\bar\varrho|\leq t^{\gamma}$. Arguing similarly as in Lemma~\ref{lemma:bound-H}, we can bound $\mathcal H^i(\bar\varrho-\rho_0,t)$ in the region $|\bar\rho|\leq t^{\gamma}$,  $t>t_0$ if we take $t_0$ large enough. From here we obtain
\begin{equation*}
    \left|(\mathcal H^i(\bar\varrho-\rho_0,t)-\tfrac{k}{t})(\bar{s}_{\bar\varrho}-\lambda_0\bar{s})\right|\leq Ct^{-1}\bar{s}\quad\textrm{for all }|\bar\varrho|\leq t^{\gamma},\ t\geq t_0.
\end{equation*}
Hence, if $\gamma\in (0,1/2)$, there exists $t_0>1$ such that
\begin{equation*}
	\bar{s}_{t}-\bar{s}_{\bar\varrho\bar\varrho}-\big(\mathcal H^{i}(\bar\varrho-\rho_0,t)-\tfrac{k}{t}\big)(\bar{s}_{\bar\varrho}-\lambda_0 \bar{s})
	\geq\big(t^{-2\gamma}-(\lambda+1+C)t^{-1}\big) \bar{s}\geq Ct^{-1}\bar{s}\quad\textrm{for all }|\bar\varrho|\leq t^{\gamma},\ t\geq t_0.
\end{equation*}
That is,
\begin{equation*}
	\begin{cases}
        \bar{s}_{t}-\bar{s}_{\bar\varrho\bar\varrho}-\big(\mathcal H^{i}(\bar\varrho-\rho_0,t)-\frac{k}{t}\big)(\bar{s}_{\bar\varrho}-\lambda_0 \bar{s})\geq Ct^{-(1+\lambda)},\quad&|\bar\varrho|\leq t^{\gamma},\ t>t_0,\\
		\bar{s}(-t^{\gamma},t)=\bar{s}(t^{\gamma},t)=t^{-\lambda}\cos(1),&t>t_0,\\
		\bar{s}(\bar\varrho,t_0)\geq t_0^{-\lambda}\cos(1),&|\bar\varrho|\leq t_0^{\gamma}.
	\end{cases}
\end{equation*}
	
Since $s$ is a locally bounded  function, there exists $C>0$ such that $|s(\bar\varrho,t_0)|\leq C\bar{s}(\bar\varrho,t_0)$ for all $|\bar\varrho|\leq t_0^{\gamma}$. Then, by the linearity of the equation, the comparison principle yields
\begin{equation*}
	-C\bar{s}(\bar\varrho, t)\leq	s(\bar\varrho,t)\leq C\bar{s}(\bar\varrho, t)\quad\textrm{for all }|\bar\varrho|\leq t^{\gamma},\ t>t_{0}.
\end{equation*}
Therefore, 
$$\displaystyle	\lim\limits_{t\to\infty}\sup_{|\bar\varrho|\leq t^{\gamma}}|s(\bar\varrho,t)|\leq\lim_{t\to\infty}\sup_{|\bar\varrho|\leq t^{\gamma}}C\bar{s}(\bar\varrho,t)=0,$$ as desired.
\end{proof}

At this point, we have all the ingredients to prove  Theorem \ref{ThConvergenceTravellingWave} in the diffusive region.  For this, we need to estimate the difference between $u$ and an appropriate shift to $\Phi_{c_0}$ in moving coordinates. The key idea here is to prepare $u$ and $\Phi_{c_0}$ in order to use Lemma~\ref{LemmaS}.

First we rewrite the estimate in Proposition \ref{Prop7-RadialEuclideo} back to the notation $v$ from by~\eqref{vdefinicion}, obtaining
\begin{equation}\label{estimate-v-above-below}
        (\alpha_{\infty}-C\varepsilon)(\varrho-Ct^{\delta}t_\varepsilon^{\frac{1}{2}-\delta})\leq{v}(\varrho,t)\leq (\alpha_{\infty}+C\varepsilon)(\varrho-Ct^{\delta}t_\varepsilon^{\frac{1}{2}-\delta}),\quad \tau\geq\tau_{\varepsilon},\  \eta\in[\eta_{\delta}^{-},\tilde{\eta}_\varepsilon].
    \end{equation}
Consider the  family of functions
\begin{equation*}
    \psi^{\pm}_{\varepsilon,T,\delta}(\varrho)=(\alpha_{\infty}\pm C\varepsilon)(\varrho \pm T^{\frac{1-\delta}{2}}\sqrt{|\varrho|}),\quad\varrho\in\mathbb{R}.
\end{equation*}
Fix $\delta_{0}=2\delta>0$. Then, by \eqref{estimate-v-above-below}, there exists $\varepsilon_0>0$ such that for each $\varepsilon\in (0,\varepsilon_0)$ there is $t_{\varepsilon}>0$ such that the function $v$ given by~\eqref{vdefinicion} satisfies
\begin{equation*}
    0\leq \psi^{-}_{\varepsilon,t_{\varepsilon},\delta_0}(t^{\delta_0})\leq v(t^{\delta_0},t)\leq\psi^{+}_{\varepsilon, t_{\varepsilon},\delta_0}(t^{\delta_0})\quad\textrm{for all }t\geq t_{\varepsilon}.
\end{equation*}

Now we go back to the original notation \eqref{vdefinicion}.
Let $\xi_{\varepsilon,\delta_0}^{\pm}$ be two functions such that
\begin{equation*}
\Phi_{c_0}\big(t^{\delta_0}+\xi_{\varepsilon,\delta_0}^{\pm}(t)\big)=e^{-\lambda_0 (t^{\delta_{0}}-\rho_0)}\psi^{\pm}_{\varepsilon, t_{\varepsilon},\delta_0}(t^{\delta_0}-\rho_0)\quad\textrm{for all }t\geq t_{\varepsilon}.
\end{equation*}
Since the implicit function theorem guarantees that $\xi_{\varepsilon,\delta_{0}}^{\pm}\in C^1$, using the decay estimate~\eqref{behaviorTw} for the profile $\Phi_{c_0}$  we have 
\begin{equation}\label{xi}	
    \xi_{\varepsilon,\delta_{0}}^{\pm}(t)=\rho_0-\tfrac1{\lambda_0}\log(\alpha_{\infty}\pm C\varepsilon)+O(t^{-\delta_0}), \quad (\xi^\pm_{\varepsilon,\delta_0})'(t)=O(t^{-(1+\delta_0)}).
\end{equation}
We now construct comparison functions $v_{\varepsilon,\delta_0}^{\pm}$ matching exactly $v$ at $\varrho+\rho_0=t^{\delta_0}$. They have to be solutions to
\begin{equation*}
	\begin{cases}
        z_{t}=z_{\varrho\varrho}+\big(\mathcal H^{i}-\frac{k}{t}\big)(z_{\varrho}-\lambda_0 z)+ e^{\lambda_0\varrho} f(e^{-\lambda_0\varrho}z)-f'(0)z,&\varrho+\rho_0\in (-t^{\delta_{0}},t^{\delta_{0}}),\;\;t>t_\varepsilon,\\
		z(\varrho,t_{\varepsilon})=v(\varrho,t_{\varepsilon}),&\varrho+\rho_0\in(-t_\varepsilon^{\delta_{0}},t_\varepsilon^{\delta_{0}}),
	\end{cases}
\end{equation*} 
with Dirichlet conditions
\[
    \begin{array}{l}
	   v_{\varepsilon,\delta_0}^\pm(t^{\delta_0}-\rho_0,t)=\psi^{\pm}_{\varepsilon, t_\varepsilon,\delta_0}(t^{\delta_{0}}-\rho_0),\\
	   v_{\varepsilon,\delta_0}^{+}(-t^{\delta_0}-\rho_0,t)=e^{-\lambda_0t^{\delta_{0}}},\\
	   v_{\varepsilon,\delta_0}^{-}(-t^{\delta_0}-\rho_0,t)=0,
    \end{array}
    \qquad\qquad\textrm{for all }t\geq t_{\varepsilon}.
\]
Since $0\leq u\leq 1$ and $v(\varrho,t)=e^{\lambda_0\varrho}u(\varrho+R(t),t)$, we have, by the comparison principle,
\begin{equation}\label{vComparison}	
    v^{-}_{\varepsilon,\delta_0}(\varrho,t)\leq v(\varrho, t )\leq v^{+}_{\varepsilon,\delta_0}(\varrho,t)\quad\textrm{for all } t\geq t_{\varepsilon}\textrm{ and } \varrho+\rho_0\in (-t^{\delta},t^{\delta}).
\end{equation}
We claim that, to be proved later, that
\begin{equation}\label{Vto0}
    \lim\limits_{t\to\infty}\sup\limits_{|\varrho+\rho_0|\leq t^{\delta_0}}\big|e^{-\lambda_0\varrho}v^{\pm}_{\varepsilon,\delta_0}(\varrho,t)
    -\Phi_{c_0}\big(\varrho+\xi_{\varepsilon,\delta_0}^{\pm}(t)\big)\big|=0.
\end{equation}
Therefore, as a consequence of the above, since $\Phi_{c_0}$ is continuous and $\varepsilon$ is arbitrary small, using~\eqref{xi} and~\eqref{vComparison}, we obtain that
\begin{equation}\label{vToTWintdelta}	
    \lim\limits_{t\to\infty}\sup\limits_{|\varrho+\rho_0|\leq t^{\delta_0}}\big|e^{-\lambda_0\rho}v(\varrho,t)-\Phi_{c_0}\big(\varrho+\rho_0-\tfrac1{\lambda_0}\log\alpha_{\infty}\big)\big|=0.
\end{equation}
Thus, by the definition to $v$ ---see~\eqref{utransladadaR(t)} and~\eqref{vdefinicion}---  we have
\begin{equation}\label{u-to-tw-diffusive-zone}	
    \lim\limits_{t\to\infty}\sup\limits_{ \{x\in\Hd: \,c_*t-\frac{3}{c_0}t-t^{\delta_{0}}\leq \rho_i(x)\leq c_*t-\frac{3}{c_0}\log t+t^{\delta_0}\}}\big|u\big(\rho_i(x),t)
	-\Phi_{c_0}(\rho_i(x)-[c_*t-\tfrac{3}{c_0}\log t-\tfrac{1}{\lambda_0}\log \alpha_0]\big)\big|=0.
\end{equation}
This would complete the proof of formula~\eqref{u-to-tw-equation-main-theorem} in Theorem~\ref{ThConvergenceTravellingWave} if we restrict the supremum to the diffusive region.

We now show claim~\eqref{Vto0}. As in the proof of Lemma \ref{LemmaS}, it is more convenient to translate by $\rho_0$ by setting $\bar\varrho=\varrho+\rho_0$. Let
\begin{equation*}
    V_{\varepsilon,\delta_{0}}^{\pm}(\bar\varrho,t)=v_{\varepsilon,\delta}^{\pm}(\bar\varrho,t)-e^{\lambda_0\bar\varrho}\Phi(\bar\varrho+\xi^{\pm}_{\varepsilon,\delta_{0}}(t)) 
\end{equation*}
and \begin{equation*}
    \tilde{F}(\sigma)=f(\sigma)-f'(0)\sigma.
\end{equation*}
Then $V_{\varepsilon,\delta_{0}}^{\pm}(\bar\varrho,t)$ are  solutions to
\begin{align*}
    z_{t}-z_{\bar\varrho\bar\varrho}-\big(\mathcal H^{i}(\bar\varrho-\rho_0,t)-\tfrac{k}{t}\big)(z_{\bar\varrho}-\lambda_0 z)&=-\big((\xi^{\pm}_{\varepsilon,\delta_{0}})^{'}-\mathcal H^{i}(\bar\varrho-\rho_0,t)+\tfrac{k}{t}\big)e^{\lambda_0\bar\rho}\Phi_{c_0}^{'}(\bar\varrho+\xi^{\pm}_{\varepsilon,\delta_{0}}(t))\\
    &\quad +e^{\lambda_0\bar\varrho}\big(\tilde{F}(e^{-\lambda_0\bar\varrho}v^{\pm}_{\varepsilon,\delta_0})-\tilde{F}(\Phi_{c_0}(\bar\varrho+\xi^{\pm}_{\varepsilon,\delta_{0}}(t))\big)
\end{align*}
for all $\bar\varrho\in (-t^{\delta_{0}},t^{\delta_{0}})$ and $t>t_\varepsilon$, with Dirichlet conditions $V_{\varepsilon,\delta_{0}}^{\pm}(t^{\delta_0},t)=0$,
\begin{equation*}
    \begin{aligned}
        &\begin{aligned}
            &V_{\varepsilon,\delta_{0}}^{+}(-t^{\delta_0},t)=e^{-\lambda_0t^{\delta_0}}(1-\Phi_{c_0}(-t^{\delta_0}
            +\xi_{\varepsilon,\delta_0}^{+}(-t^{\delta_{0}})))\\ &V_{\varepsilon,\delta_{0}}^{-}(-t^{\delta_0},t)=e^{-\lambda_0t^{\delta_0}}\Phi_{c_0}(-t^{\delta_0}
            +\xi_{\varepsilon,\delta_0}^{-}(-t^{\delta_{0}}))
        \end{aligned}
        &&\text{for all }t\geq t_{\varepsilon},\\
        &V_{\varepsilon,\delta_{0}}^{\pm}(\bar\varrho,t_\varepsilon)=v_{\varepsilon,\delta}^{\pm}(\bar\varrho,t_\varepsilon)-e^{\lambda_0\bar\varrho}
        \Phi_{c_0}(\bar\varrho+\xi^{\pm}_{\varepsilon,\delta_{0}}(t_\varepsilon))&&\textrm{for all }|\bar\varrho|\leq t^{\delta_{0}}.
    \end{aligned}
\end{equation*}
Observe that  if $|\bar\varrho|\leq t^{\delta_0}$, since $R(t)\asymp c_*t$, then $\mathcal{H}^i(\bar\varrho-\rho_0,t)=O(1/t)$ for large time; see the proof of Lemma \ref{lemma:bound-H}. Therefore, using also~\eqref{xi}, we have that 
\begin{equation}\label{error2}
   ( \xi^{\pm}_{\varepsilon,\delta_{0}})^{'}-\mathcal H^{i}(\bar\varrho-\rho_0,t)+\tfrac{k}{t}=O\big(\tfrac{1}{t}\big)\quad\textrm{for all }|\bar\varrho|\leq t^{\delta_0}.
\end{equation}
Moreover, we claim that there exists $C>0$ with
\begin{equation}\label{ClaimDerivatiestraveling wave}
    |e^{\lambda_0 \bar\varrho }\Phi_{c_0}^{'}(\bar\varrho)|\leq C(1+\bar\varrho^{2}\mathds{1}_{\{\bar\varrho>0\}}(\bar\varrho))\quad\textrm{for all }\bar\varrho\in\mathbb{R}.
\end{equation}
To see this, let $\omega(\bar\varrho)=e^{\lambda_0\bar\varrho}\Phi'_{c_0}(\bar\varrho)$, $\bar\varrho\in\mathbb{R}$. Then, 
\begin{equation*}
    \omega_{\bar\varrho}+\lambda_0\omega=-e^{\lambda_0\bar\varrho}f(\Phi_{c_0}(\bar\varrho)),\quad\bar\varrho\in\mathbb{R},
\end{equation*}
whence
\begin{equation*}
	\omega(\bar\varrho)=-|\Phi'_{c_0}(0)|e^{-\lambda_0\bar\varrho}-\int_{0}^{\bar\varrho}e^{\lambda_0(2s-\bar\varrho)}f(\Phi_{c_0}(\sigma))\,{\rm d}\sigma.
\end{equation*}
Using again that $f$ is a KPP function and  the fact that $0\leq \Phi_{c_0}\leq 1$  has the decay~\eqref{behaviorTw}, we get~\eqref{ClaimDerivatiestraveling wave}. 

The bounds~\eqref{error2} and~\eqref{ClaimDerivatiestraveling wave} imply that, if $|\bar\varrho|\le t^{\delta_0}$, then
\begin{equation}\label{cotaf2}	
    \sup\limits_{|\bar\varrho|\leq t^{\delta_0}}\Big|\big((\xi^{\pm}_{\varepsilon,\delta_{0}})^{'}-\mathcal H^{i}(\bar\varrho-\rho_0,t)+\tfrac{k}{t}\big) e^{\lambda_0\bar\varrho}\Phi_{c_0}^{'}(\bar\varrho+\xi^{\pm}_{\varepsilon,\delta_{0}}(t)\big)\Big|=O(t^{-(1-2\delta_0)}).
\end{equation}
On the other hand, thanks to~\eqref{KPPfunction.2*}, $\tilde{F}'\leq 0$, which implies that
\begin{equation}\label{signof1}
    \operatorname{sign}(V^{\pm}_{\varepsilon,\delta_0})e^{\lambda_0\bar\varrho}\big(\tilde{F}(e^{-\lambda_0\bar\varrho}v^{\pm}_{\varepsilon,\delta_0})-\tilde{F}(\Phi_{c_0}(\bar\varrho+\xi^{\pm}_{\varepsilon,\delta_{0}}(t))\big)\leq 0.
\end{equation}
Notice that hypothesis~\eqref{KPPfunction.2} would not be enough here.
Then, combining \eqref{signof1} and~\eqref{cotaf2} and using Kato's inequality $|V|_{\bar\varrho\bar\varrho}\geq\operatorname{sign}(V)V_{\bar\varrho\bar\varrho}$, we have that  there is $C>0$ such that $|V_{\varepsilon,\delta_{0}}^{\pm}|$ is a subsolution of
\begin{equation*}
	\begin{cases}
		z_{t}-z_{\bar\varrho\bar\varrho}-\big(\mathcal H^{i}(\bar\varrho-\rho_0,t)-\frac{k}{t}\big)(z_{\bar\varrho}-\lambda_0 z)=C t^{-(1-2\delta_0)},\quad &|\bar\varrho|\leq t^{\delta_0},\ t>t_0,\\
		z(-t^{\delta_0},t)=O(e^{-\lambda_0t^{\delta_0}}),\quad z(t^{\delta_0},t)=0,&t>t_0.
	\end{cases}
\end{equation*}
Thus, taking $\delta_0\in (0,1/2)$,  since $V_{\varepsilon,\delta_{0}}^{\pm}$ is locally bounded, by Lemma~\ref{LemmaS} and the comparison principle we obtain~\eqref{Vto0}.

%%%%%%%%%%%%%%%%%%%%%%%%%%%%
\subsection{Superdiffusive region} \label{subsection:superdiffusive}

The following result captures the behavior of $u$ in the superdiffusive region.

\begin{proposition}\label{ThmNecesityLogarithmicCorrection} 
    Consider $f'(0)>\lambda_{1}$ and $u_0$ satisfying~\eqref{eq:symmetry.u0}--\eqref{eq:supported-from-right} for some~$i\in\{\mathfrak{e},\mathfrak{h},\mathfrak{p}\}$. Then, there is $C>0$ for which the solution $u$ to~\eqref{KPPproblem} with initial datum $u_0$ satisfies, for any $\varepsilon>0$,
    \begin{equation}\label{NecesityOfLogarithmicCorrection}	
        \lim\limits_{t\to\infty}\sup\limits_{\{x\in\Hd:\;\rho_i(x)>c_*t-\frac{3}{\co}\log t+(\frac{1}{c_0}+\varepsilon)\log t\}}u(x,t)=0.
    \end{equation}
\end{proposition}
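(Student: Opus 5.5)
We propose to use the upper barrier already obtained for $w$ in Proposition~\ref{Sub/Super for hat{w}}(i), rather than the crude heat-kernel bound from Proposition~\ref{NecesityOfLogathElliptic}, which only gives a $\tfrac1{c_0}\log t$ correction. The key point is that \eqref{super-hat{w}}, rewritten in the original variables, already encodes a $\tfrac{3}{c_0}\log t$ correction. An extra $\tfrac{1}{c_0}\log t+\varepsilon\log t$ then absorbs both the polynomial factor coming from the dipole and the translation by $\rho_0$.

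\textbf{Step 1: bound on $w$.} Since $\phi_0(\eta)=\eta e^{-\eta^2/4}$ and the correction terms are $O(q^+(\tau)e^{-\eta^2/8})$ with $q^+$ bounded, \eqref{super-hat{w}} gives
\[
w(\eta,\tau)\le C\big(1+(\eta-\eta_\delta^-)\big)\quad\text{for all }\eta\ge\eta_\delta^-,\ \tau>0,
\]
for some $C>0$. We only use that $w$ grows at most linearly in $\eta$; in fact it is even bounded.

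\textbf{Step 2: undoing the changes of variables.} By \eqref{utransladadaR(t)}, \eqref{vdefinicion} and \eqref{hatw-transformacion4a}, with $\varrho=\eta\sqrt t$ and $t=e^\tau$, we have
\[
u(\varrho+R(t),t-1)=e^{-\lambda_0\varrho}\,t^{1/2}\,w\big(\varrho t^{-1/2},\log t\big)\le C\,t^{1/2}\big(1+\varrho t^{-1/2}\big)e^{-\lambda_0\varrho}\le C\,(t^{1/2}+\varrho)\,e^{-\lambda_0\varrho}
\]
for $\varrho\ge 0$. Since $\lambda_0=c_0/2$ and $R(t)=c_*t-\tfrac{3}{c_0}\log t+\rho_0$, a point with
\[
\rho_i(x)>c_*t-\tfrac{3}{c_0}\log t+\big(\tfrac1{c_0}+\varepsilon\big)\log t
\]
satisfies $\varrho=\rho_i(x)-R(t)\ge(\tfrac1{c_0}+\varepsilon)\log t-\rho_0$. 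The shift $t\mapsto t-1$ and the constant $\rho_0$ only modify constants and $O(1/t)$ terms in the logarithm. Hence
\[
e^{-\lambda_0\varrho}\le C\,t^{-\frac12-\frac{c_0\varepsilon}{2}}.
\]
The term $t^{1/2}e^{-\lambda_0\varrho}$ is therefore $O(t^{-c_0\varepsilon/2})\to0$. The term $\varrho e^{-\lambda_0\varrho}$ is decreasing once $\varrho\ge1/\lambda_0$, so it is maximized at the left endpoint of the region, where it is $O(\log t\cdot t^{-1/2-c_0\varepsilon/2})\to0$. Because $\varrho e^{-\lambda_0\varrho}$ is monotone there, the supremum over the whole unbounded region is controlled by this single endpoint value. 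This gives \eqref{NecesityOfLogarithmicCorrection}.

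\textbf{Main obstacle.} The delicate part is justifying Step 1 uniformly over the whole half-line $\eta\ge\eta_\delta^-$, and for all large $\tau$ rather than just $\tau>\tau_1$. One has to check that \eqref{super-hat{w}} holds on this entire range, which follows because the supersolution comparison is carried out for $\tau>0$ on $\{\eta\ge0\}$. One also has to verify that the boundary value in \eqref{Dirichlet2} and the compact support from the right \eqref{eq:supported-from-right} of the initial datum are genuinely compatible with the choice of $C_0^+$. Should this uniformity fail, the fallback is a direct comparison of $v$ with the explicit supersolution $C(t^{1/2}+\varrho_+)$ of \eqref{eq:v-step3}. This function is a supersolution thanks to \eqref{KPPfunction.2} and the bound $\mathcal H^i=o(t^{-1})$ from Lemma~\ref{lemma:bound-H}, once $\rho_0$ is taken large.
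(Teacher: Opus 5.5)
Your proposal is correct and follows the paper's route. The paper applies the upper barrier \eqref{super-hat{w}} of Proposition~\ref{Sub/Super for hat{w}}(i), undoes the changes of variables to get $u(\varrho+R(t),t)\le C t^{1/2}e^{-\lambda_0\varrho}$ for $\varrho\ge0$, and uses $\lambda_0=c_0/2$ to see that this vanishes once $\varrho\ge(\tfrac{1}{c_0}+\varepsilon)\log t-\rho_0$. Since $w$ is bounded, your extra $\varrho e^{-\lambda_0\varrho}$ term and the fallback supersolution are unnecessary but harmless, and you handle the time shift $t\mapsto t-1$ explicitly, which the paper glosses over.
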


\begin{proof}
Recall that we are rewriting the  solution $u$ to~\eqref{KPPproblem}   as \begin{equation}\label{change:u-w}u(\varrho+R(t),t)=e^{-\lambda_0\varrho}t^{1/2}{w}(\varrho t^{-1/2},\log t).
\end{equation}
By  point (i) in Proposition~\ref{Sub/Super for hat{w}} we have that, for each $\delta\in (0,1/4)$ there is $C>0$ such that
\begin{equation*}
    u(\varrho+R(t),t)\leq Ce^{-\lambda_0\varrho}\big((\varrho+t^{\delta})e^{-\frac{(\varrho+t^\delta)^2}{4t}} +t^\delta\big(1+e^{-\frac{\varrho^2}{16t}}\big)e^{-\frac{\varrho^2}{8t}}\big)\quad\textrm{for all }\varrho\geq 0\;\;\textrm{and }t>0.
\end{equation*}
Then, since $\delta\in (0,1/4)$ and $0\leq se^{-s^2}\leq 1$ for all $s\geq 0$, we have that for all $t_1>0$ there is $C>0$ such that
\begin{equation*}
	u(\rho+R(t),t)\leq Ct^{1/2}e^{-\lambda_0\rho}\quad\textrm{for all }\rho\geq 0\textrm{ and }t\geq t_1.
\end{equation*}
Therefore, since $c_0=2\lambda_0$, for all $\varepsilon>0$,
\begin{equation*}
	\lim\limits_{t\to\infty}\sup\limits_{\{\varrho\geq (\frac{1}{c_0}+\varepsilon)\log t\}}u(\varrho+R(t),t)=0.
\end{equation*}
Recalling the expression of $R(t)$ from \eqref{R,c0,c*,lambda0} with $k=3/\co$ (see~\eqref{k}), we conclude~\eqref{NecesityOfLogarithmicCorrection}.
\end{proof}

\begin{corollary}
    Under the hypotheses of Theorem~\ref{ThConvergenceTravellingWave}, formula~\eqref{u-to-tw-equation-main-theorem} holds true if we restrict the supremum to the superdiffusive region.
\end{corollary}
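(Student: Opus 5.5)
The plan is to combine the upper bound already available from Proposition~\ref{ThmNecesityLogarithmicCorrection} with the convergence in the diffusive region, \eqref{u-to-tw-diffusive-zone}. The superdiffusive region is $\rho_i(x)\ge c_*t-\tfrac{3}{c_0}\log t+t^{\delta_0}$ (beyond the diffusive window). Since $t^{\delta_0}\gg(\tfrac1{c_0}+\varepsilon)\log t$ for large $t$, this region lies inside the set where Proposition~\ref{ThmNecesityLogarithmicCorrection} applies. Hence
\[
\sup_{\{\rho_i(x)\ge c_*t-\frac{3}{c_0}\log t+t^{\delta_0}\}}u(x,t)\to0 .
\]
It then suffices to show that the traveling wave is also uniformly small there.

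For the traveling wave, write $\beta=\tfrac1{\lambda_0}\log\alpha_\infty$. On the superdiffusive region the argument of $\Phi_{c_0}$ is
\[
\rho_i(x)-\big(c_*t-\tfrac{3}{c_0}\log t+\beta\big)\ge t^{\delta_0}-\beta\to\infty .
\]
Since $\Phi_{c_0}$ is decreasing with $\Phi_{c_0}(+\infty)=0$, and by \eqref{behaviorTw} in fact $\Phi_{c_0}(s)\le C(1+s)e^{-\lambda_0 s}$, we get
\[
\sup_{\text{region}}\Phi_{c_0}\big(\rho_i(x)-[c_*t-\tfrac{3}{c_0}\log t+\beta]\big)\le\Phi_{c_0}(t^{\delta_0}-\beta)\to0 .
\]
The triangle inequality then bounds $|u-\Phi_{c_0}(\cdot)|$ by the sum of the two suprema, and both tend to $0$. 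Because $u$ depends on $x$ only through $\rho_i(x)$ (Corollary~\ref{CorInvariantIsometries}), taking the supremum over $x$ is the same as taking it over $\rho_i$.

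The only point needing care is a bookkeeping one: the boundary of the superdiffusive region must be the same as the right edge $t^{\delta_0}$ of the diffusive window used in \eqref{u-to-tw-diffusive-zone}, so that the two pieces cover everything to the right of the front. If one prefers a region of the form $\rho_i\ge c_*t-\tfrac3{c_0}\log t+t^{\gamma}$ with arbitrary $\gamma\in(0,1/2)$, the same argument works verbatim, since any positive power of $t$ eventually dominates $(\tfrac1{c_0}+\varepsilon)\log t$. The shift by $\rho_0$ in the moving frame is harmless because it is a fixed constant and is absorbed for $t$ large. No new estimate is needed; I expect no real obstacle here beyond checking that these thresholds are compatible.
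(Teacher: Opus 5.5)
Your proposal is correct and follows essentially the same route as the paper. Both arguments get $\sup u\to0$ on the region from Proposition~\ref{ThmNecesityLogarithmicCorrection}, since $t^{\delta_0}$ eventually dominates $(\frac{1}{c_0}+\varepsilon)\log t$; both bound the wave by $\Phi_{c_0}(t^{\delta_0}-\beta)\to0$ using monotonicity, and both conclude with the triangle inequality. Your threshold $c_*t-\frac{3}{c_0}\log t+t^{\delta_0}$ differs from the paper's $R(t)+t^{\delta_0}$ only by the constant $\rho_0$, which is immaterial, and yours coincides exactly with the right edge of the diffusive window.
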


\begin{proof}
Since  $\Phi_{c_0}$ is a decreasing continuous function and $\displaystyle\lim_{s\to\infty}\Phi_{c_0}(s)=0$, we get
\begin{equation*}
		\lim\limits_{t\to\infty}	\big\| \Phi_{c_0}\big(\cdot-[c_*t-\tfrac{3}{c_0}\log t+\beta]\big)\big\|_{L^{\infty} (R(t)+t^{\delta_0},\infty)}=\lim\limits_{t\to\infty}\Phi_{c_0}\big(t^{\delta_{0}}-\tfrac{1}{\lambda_0}\log\alpha_{\infty}\big)=0.
\end{equation*}
On the other hand, by Proposition~\ref{ThmNecesityLogarithmicCorrection}, we have that
\begin{equation*}
    \lim\limits_{t\to\infty}\sup\limits_{\{x\in\Hd:\, \rho_i(x)\geq R(t)+t^{\delta_0}\}} u(x,t)=0.
\end{equation*}
As a consequence,
\begin{equation*}
    \lim\limits_{t\to\infty}\sup\limits_{ \{x\in\Hd: \, \rho_i(x)\geq R(t)+t^{\delta_0}\}}\big|u(\rho_i(x),t)
    -\Phi_{c_0}\big(\rho_i(x)-R(t)-\tfrac1{\lambda_0}\log \alpha_{\infty}\big)\big|=0.\qedhere
\end{equation*}
\end{proof}

%%%%%%%%%%%%%%%%%%%%%%%%%%%%
\subsection{Subdiffusive region}
\label{subsection:subdiffusive}
Now, we will prove that $u$ converges to $1$ in the subdiffusive region.

\begin{proposition}\label{ThmLowerSpredingSpeedBestia}
    Consider $f'(0)>\lambda_{1}$ and $u_0$ satisfying~\eqref{eq:symmetry.u0}--\eqref{eq:supported-from-right} for some~$i\in\{\mathfrak{e},\mathfrak{h},\mathfrak{p}\}$. Then, the solution $u$ to the~\eqref{KPPproblem} problem with initial datum $u_{0}$ satisfies that for all $g:[0,\infty)\to\mathbb{R}$  such that $\displaystyle\lim_{t\to\infty}g(t)=\infty$, we have
    \begin{equation}\label{LowerSpreedingSpeed}	
        \lim\limits_{t\to\infty}\inf_{\{x\in\Hd,\ 0\leq\rho_{i}(x)\leq c_{*}t-\frac{3}{c_{0}}\log  t-g(t)\}}u(x,t)=1.
    \end{equation}
\end{proposition}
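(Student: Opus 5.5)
We will prove \eqref{LowerSpreedingSpeed} by splitting the region $\{0\le\rho_i\le c_*t-\frac{3}{c_0}\log t-g(t)\}$ into an inner part and a part that moves with the front. Throughout we assume, without loss of generality, that $g(t)\le t^{\delta_0}$; shrinking $g$ only enlarges the region, so this costs nothing.

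The first step handles the region adjacent to the diffusive zone, $R(t)-\rho_0-t^{\delta_0}\le\rho\le R(t)-\rho_0-g(t)$. Here we use the diffusive-region result \eqref{u-to-tw-diffusive-zone}. Since $\Phi_{c_0}$ is decreasing with $\Phi_{c_0}(-\infty)=1$, the argument of $\Phi_{c_0}$ is at most $-g(t)+\frac1{\lambda_0}\log\alpha_\infty\to-\infty$ in this range. Hence the traveling wave is uniformly close to $1$ there, and so is $u$.

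The second step, which is the main one, handles the range $0\le\rho\le R(t)-\rho_0-t^{\delta_0}$. We use a moving subsolution argument in the one-dimensional variable $\rho$. Fix $\varepsilon>0$ and choose $\theta\in(0,1)$ close to $1$. From the first step we know that $u\ge1-\varepsilon$ on a left boundary that moves at asymptotic speed $c_*$, namely at $\rho=\ell(t):=R(t)-\rho_0-t^{\delta_0}$, for $t\ge t_\varepsilon$.

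On the left we use Proposition~\ref{prop:convergence.to.1.compact}. For every fixed $L$ it gives $u\to1$ uniformly on $\{|\rho_i|\le L\}$. In the elliptic case we interpret this as $0\le\rho\le L$, and the condition at the origin plays no role, since the equation is posed on all of $\Hd$ and comparison is done there.

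We then compare $u$ on the strip $\{L\le\rho\le\ell(t)\}$ for $t\ge T$ with a subsolution. The natural candidate is the constant $1-\varepsilon'$ with Dirichlet data at both ends. More robustly, we take the solution $\underline u$ of the $\rho$-equation $\underline u_t=\underline u_{\rho\rho}+(d-1)h_1^i\underline u_\rho+f(\underline u)$ on this domain, with boundary values $1-\varepsilon$ and initial datum $0$ at time $T$. Comparison gives $u\ge\underline u$. It then suffices to show that $\underline u\to1-\varepsilon$ uniformly on the growing strip, which follows from two ingredients. The constant $1-\varepsilon$ is a stationary solution up to sign, since $f(1-\varepsilon)>0$ makes it a subsolution. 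And the region where $\underline u$ is small is invaded at speed $c_*$ from both ends.

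The obstacle in this step is uniformity, because the strip grows. We plan to handle it by covering the strip with translates of a compactly supported stationary subsolution, of the kind used in the proof of Theorem~\ref{ThPropagation}. This requires a principal eigenfunction of $\partial_{\rho\rho}+(d-1)h_1^i\partial_\rho+f'(0)$ on a large interval $[a,a+M]$, which has positive principal eigenvalue because $f'(0)>\lambda_1$, i.e.\ $c_0>d-1$. Translation-invariance fails only through $h_1^i$, and $h_1^i$ is exponentially close to $1$ for $\rho\ge L$ by \eqref{g-decay}. Thus a single small multiple $\mu\,e^{-\frac{d-1}{2}(\rho-a)}\sin(\pi(\rho-a)/M)$ is a subsolution for every $a\ge L$, with $\mu$ independent of $a$.

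By Step~1 and the estimate \eqref{estimate-hat-w} giving a positive lower bound on $u$, each such bump lies below $u$ at some time. Sliding the bumps and using the classical sliding/Aronson--Weinberger ODE argument, we get $\liminf u\ge1-\varepsilon$ uniformly on the strip. Letting $\varepsilon\to0$ concludes the proof.
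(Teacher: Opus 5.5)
\textbf{There is a genuine gap in Step~1, and Step~2 depends on it.}

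You take $u\ge1-\varepsilon$ on $[R(t)-\rho_0-t^{\delta_0},\,R(t)-\rho_0-g(t)]$, in particular at $\ell(t)$, from \eqref{u-to-tw-diffusive-zone}. The diffusive-window analysis does not supply that lower half.

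\begin{itemize}
\item The lower barrier $v^-_{\varepsilon,\delta_0}$ is given Dirichlet value $0$ at $\varrho+\rho_0=-t^{\delta_0}$. It therefore corresponds to $u=0$ exactly where $\Phi_{c_0}\approx1$.
\item Lemma~\ref{LemmaS} only makes $V^\pm_{\varepsilon,\delta_0}=o(1)$. That controls $u-\Phi_{c_0}$ only up to $o(1)\,e^{-\lambda_0\bar\varrho}$, which is vacuous once $\bar\varrho\ll-\log t$.
\item The lower bound in Proposition~\ref{Prop7-RadialEuclideo} is nonpositive for $\varrho\lesssim t^{\delta}$.
\end{itemize}

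So ``$u\to1$ at distance $g(t)$ behind the front'' is not available from the window analysis. It is precisely what the present proposition asserts, so feeding it in as Step~1 is circular.

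Without it, your sliding argument has no seed near the moving end. The only independent positive lower bounds there, \eqref{estimate-hat-w} and Proposition~\ref{Prop7-RadialEuclideo}, live at $\varrho\gtrsim t^{\delta}$ and have size $e^{-\lambda_0t^{\delta}}$ up to polynomial factors. Growing a bump from such a height to $1-\varepsilon$ takes a time of order $t^{\delta}$, and the front moves a comparable distance meanwhile. This route therefore gives convergence to $1$ only at distances of order $t^{\delta}$ behind the front, not for an arbitrary $g(t)\to\infty$.

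If you did have a seed $u\ge1-\varepsilon$ on a window of width tending to infinity moving with the front, your bump-sliding on $[L,\ell(t)]$ would work. You would need to replace the explicit $e^{-\frac{d-1}{2}(\rho-a)}\sin(\cdot)$ by the true Dirichlet principal eigenfunction, because $(d-1)(h_1^i-1)\phi'$ does not vanish at an endpoint.

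\textbf{The missing idea} is a lower barrier that moves with the front and has the correct exponential tail, so the lag is $O(1)$.

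The paper fixes $m\in(0,1)$ and chooses $f_1\le f$ on $[0,m]$ with $f_1(0)=f_1(m)=0$ and $f_1'(0)=f'(0)$. It takes $\underline u(\rho,t)=\tilde\Phi_{c_0}(\rho-R(t)+\rho_3)$, where $\tilde\Phi_{c_0}$ is the speed-$c_0$ wave for $f_1$, with values in $(0,m)$ and $\tilde\Phi_{c_0}(s)\asymp se^{-\lambda_0 s}$.

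A direct computation gives $\underline u_t-\underline u_{\rho\rho}-(d-1)h_1^i\underline u_\rho-f(\underline u)=\big(\tfrac kt+(d-1)(1-h_1^i)\big)\tilde\Phi_{c_0}'+f_1(\underline u)-f(\underline u)$. This is $\le0$ by $\tilde\Phi_{c_0}'<0$, $f_1\le f$, and $h_1^i\le1$ for $i\in\{\mathfrak{h},\mathfrak{p}\}$. In the elliptic case one restricts to $\rho\ge c_*t/2$, where $\tfrac kt$ dominates $(d-1)(\coth\rho-1)$.

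Comparison is done on $A_i(t)$, which reaches up to $\rho=c_*t+t^{1/2}$.
\begin{itemize}
\item Left edge: $u\ge m$ by Proposition~\ref{prop:convergence.to.1.compact}, or by Theorem~\ref{ThSpreadingSpeedElliptic}(ii) in the elliptic case.
\item Right edge: the available bound $u\gtrsim t^{-1}e^{-\lambda_0t^{1/2}}$ dominates $\tilde\Phi_{c_0}(t^{1/2}+\tfrac3{c_0}\log t+\rho_3)\lesssim e^{-\lambda_0\rho_3}t^{-1}e^{-\lambda_0t^{1/2}}$ once $\rho_3$ is large. The choice $\lambda_0k=\tfrac32$ is exactly what makes the powers of $t$ match.
\item Initial time $t=T$: use positivity of $u$ on a compact set, again with $\rho_3$ large.
\end{itemize}

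Then $u\ge\tilde\Phi_{c_0}(\rho_3-\rho_0-g(t))\to m$ on the region, and one lets $m\uparrow1$. None of these ingredients relies on the diffusive-window convergence.
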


\begin{proof}
\textsc {Step 1.} Fix $m\in (0,1)$. Let $f_1:[0,1]\to\mathbb{R}$  be a  $C^1$ function such that
\begin{gather}\label{f_1}
	f_1(0)=f_1(m)=0,\quad f_1'(0)=f'(0),\qquad f_1\leq f\quad\textrm{in }[0,m],\quad f_1>0\quad\textrm{in }(0,m).
\end{gather}
Then, following arguments of~\cite[Theorem 4.1]{AronsonWeinberger}, there exists $ \tilde{\Phi}_{c_0}:\mathbb{R}\to (0,m)$ such that
\begin{gather}
    \label{EqTildeTW} \tilde{\Phi}_{c_0}''+\co\tilde{\Phi}_{c_0}'+f_1(\tilde{\Phi}_{c_0})=0\quad\textrm{in }\mathbb{R},\\
	\notag\lim\limits_{s\to-\infty}\tilde{\Phi}_{c_0}(s)=m,\quad\tilde{\Phi}_{c_0}'<0,\quad\lim\limits_{s\to+\infty}\tilde{\Phi}_{c_0}(s)=0,
\end{gather}
and for some $B>1$ and $\rho_1>0$
\begin{equation}\label{behaviorFalseTw}
    B^{-1}s e^{-\lambda_0 s}\leq \tilde{\Phi}_{c_0}(s)\leq Bs e^{-\lambda_0 s}\quad\textrm{for all }s\geq \rho_{1}.
\end{equation}

Our aim is to prove that
\begin{equation*}
	\underline{u}(\rho,t)=\tilde{\Phi}_{c_0}(\rho-R(t)+\rho_3)
\end{equation*}
is a subsolution for $u$ in $\{(\rho,t):\;\rho\in A_{i},\;t\geq T\}$, where $\rho_3, T>0$ will be chosen later and
\begin{equation*}
	A_{i}(t)=
    \begin{cases}
		[0,c_*t+t^{1/2}]&\textrm{if }i\in\{\mathfrak{h},\mathfrak{p}\},\\
		[\frac{c*}{2}t,c_*t+t^{1/2}]&\textrm{if }i=\mathfrak{e}.
	\end{cases}
\end{equation*}

On the one hand, since $\tilde{\Phi}_{c_0}$ satisfies~\eqref{EqTildeTW}, then, using~\eqref{f_1} and the fact that $0\leq \underline{u}\leq m$,
\begin{align*}
    \underline{u}_{t}-\underline{u}_{\rho\rho}-(d-1)h^{i}_{1}\underline{u}_{\rho}-f(\underline{u})&=f_{1}(\underline{u})-f(\underline{u})+\Big(\tfrac{k}{t}+(d-1)(1-h^{i}_{1}(\rho))\Big)\underline{u}_{\rho}\\
    &\leq \Big(\tfrac{k}{t}+(d-1)(1-h^{i}_{1})\Big)\underline{u}_{\rho}.
\end{align*}
Moreover, $1-h^{i}_{1}\geq 0$ for $i\in\{\mathfrak{h},\mathfrak{p}\}$ and since $\tilde{\Phi}_{c_0}'<0$ implies $\underline{u}_{\rho}<0$, we have
\begin{equation}\label{sub u for h,p}	
    \underline{u}_t-\underline{u}_{\rho\rho}-(d-1)h^{i}_{1}(\rho)\underline{u}_{\rho}-f(\underline{u})\leq 0\quad\textrm{in }\{(\rho,t):\rho\in A_{i},\ t>0\}
\end{equation}
for $i\in\{\mathfrak{h},\mathfrak{p}\}$. For $i=\mathfrak{e}$  it holds $h^{i}_{\mathfrak{e}}(\rho)=\coth\rho$. Then,
\begin{equation*}
	1-\coth\rho=-\frac{2}{e^{2\rho}-1},\quad\rho>0,
\end{equation*}
which implies that there exists $t_{1}>0$ such that
\begin{equation*}
    \inf\limits_{\rho\in A_{\mathfrak{e}}(t)}\left(\frac{k}{t}+(d-1)(1-h^{i}_{1})\right)=\frac{k}{t}-\frac{2(d-1)}{e^{c_*t}-1}>0\quad\textrm{for all }t\geq t_1.
\end{equation*}
Thus,
\begin{equation}\label{sub u for e}	
    \underline{u}_{t}-\underline{u}_{\rho\rho}-(d-1)\coth\rho\,	\underline{u}_{\rho}-f(	\underline{u})\leq 0\quad\textrm{in } \{(\rho,t):\rho\in A_{\mathfrak{e}},\;t\geq t_{1}\}.
\end{equation}

On the other hand, by \cite[Theorem 3.2 ii)]{Matano} (see also Proposition~\ref{prop:convergence.to.1.compact}) we know that there exists $t_{2}\geq t_1$ such that
\begin{equation*}
	u(0,t)\geq m\quad\textrm{for all }t\geq t_2\;\;\textrm{for }i\in\{\mathfrak{h},\mathfrak{p}\},
\end{equation*}
and by \cite[Theorem 3.6, ii)]{Matano} it holds that 
\begin{equation*}
	\inf\limits_{\{0\leq\rho\leq\frac{c_*}{2}t\}}u(\rho,t)\geq m\quad\textrm{for all }t\geq t_2\textrm{ for }i=\mathfrak{e}.
\end{equation*}
Then, since $\tilde{\Phi}_{c_0}\leq m$,  for all $t\geq t_2$, $u(0,t)\geq \underline{u}(0,t)$ if $i\in\{\mathfrak{h},\mathfrak{p}\}$ and $u(\frac{c_*}{2}t,t)\geq \underline{u}(\frac{c_*}{2}t,t)$ if $i=\mathfrak{e}$.

In addition, recalling \eqref{change:u-w}, by Proposition~\ref{Prop7-RadialEuclideo}, we have that, for $\delta\in (0,1/4)$, there exists $t_3\geq t_2$ such that
\begin{equation}\label{Cota1LowerSpreadingSpeed}
    u(\varrho+R(t),t)\geq\frac{\alpha_\infty}{2}(\varrho-Ct^\delta)e^{-\lambda_{0}\varrho}\quad\textrm{for all }\varrho\in[-t^{\delta},at^{1/2}],\ t\geq t_3,
\end{equation}
for some $a>1$ and  where $\alpha_\infty$ and $C$ denotes the same positive constants as in Proposition~\ref{Prop7-RadialEuclideo}. Then, since there is $T\geq t_3$ such that  $\varrho=t^{1/2}+\frac{3}{c_0}\log  t -\rho_0\in [-t^{\delta}, a t^{1/2}]$ for all $t\geq T$, using~\eqref{Cota1LowerSpreadingSpeed}, we conclude that
\begin{equation*}
	u(c_*t+t^{1/2},t)\geq\frac{\alpha_\infty e^{\lambda_0\rho_0} e^{-\lambda_{0}t^{1/2}}}{2t^{3/2}}\big(t^{1/2}+\tfrac{3}{c_0}\log t -\rho_0-Ct^{\delta}\big)\geq \frac{\alpha_\infty e^{\lambda_0\rho_0} e^{-\lambda_{0}t^{1/2}}}{4t}
\end{equation*}
for all $t\geq T$. This implies, together with~\eqref{behaviorFalseTw} and the choice~\eqref{k}, that for $\rho_3>0$ big enough,
\begin{equation*}
	u(c_* t+t^{1/2},t)\geq \tilde{\Phi}_{c_0}\left(t^{1/2}+\tfrac{3}{c_0}\log t+\rho_3\right)=\underline{u}(c_* t+t^{1/2},t)\quad\textrm{for all }t\geq  T.
\end{equation*}

Finally, since the solution of the heat equation~\eqref{HeatEquationHd} with initial datum $u_0$ is strictly positive and $f\geq 0$, by the comparison principle, Theorem~\ref{ComparsionPrincipleHd}, we have that
\begin{equation*}
	\min\limits_{\{0\leq \rho\leq c_{*}T+T^{1/2}\}}u(\rho,T)>0.
\end{equation*}
Moreover, since $\displaystyle\lim_{s\to\infty}\tilde{\Phi}_{c_0}(s)=0$, it holds that, for $\rho_3>0$ big enough,
\begin{equation*}
	u(\rho,T)\geq \min\limits_{\{0\leq \rho\leq c_{*}T+T^{1/2}\}}u(\rho,T)\geq \tilde{\Phi}_{c_0}(-R(T)+\rho_3)\geq \tilde{\Phi}_{c_0}(\rho-R(T)+\rho_3)=\underline{u}(\rho,T)
\end{equation*}
for all $\rho\in [0,c_{*}T+T^{1/2}]$.
	
Summarizing, $\underline{u}\leq u$ in the boundary $\partial\{(\rho,t):\,\rho\in A_{i}, t\geq T\}$. Therefore, since $\underline u$ is a subsolution of the equation, see ~\eqref{sub u for h,p} and~\eqref{sub u for e}, by the comparison principle, $\underline{u}\leq u$ in the full region $\{(\rho,t):\,\rho\in A_{i}, t\geq T\}$.
	
\noindent\textsc {Step 2.} Now we prove~\eqref{LowerSpreedingSpeed}. Let $g$ be  such that $\lim\limits_{s\to\infty} g(t)=\infty$. If $i\in\{\mathfrak{h},\mathfrak{p}\}$, by Step 1, and using also the monotonicity of the profile,
\begin{equation*}
    \lim\limits_{t\to\infty}\inf\limits_{\{0\leq\rho\leq R(t)-g(t)\}}u(\rho,t)\geq 	\lim\limits_{t\to\infty}\inf\limits_{\{0\leq\rho\leq R(t)-g(t)\}}\tilde{\Phi}_{c_0}(\rho-R(t)+\rho_3)\ge\lim\limits_{t\to\infty}\tilde{\Phi}_{c_0}(-g(t)+\rho_3)=m.
\end{equation*}
Since $u\in [0,1]$ and $m\in (0,1)$ arbitrary, we obtain~\eqref{LowerSpreedingSpeed} for $i\in\{\mathfrak{h},\mathfrak{p}\}$.
	
As for $i=\mathfrak{e}$, by \cite[Theorem 3.6, ii)]{Matano}, we know that
\begin{equation*}
	\lim\limits_{t\to\infty}\inf\limits_{\{0\leq\rho\leq \frac{c_*}{2}t\}}u(\rho,t)=1.
\end{equation*}
Then, since $u\in[0,1]$, using  Step 1 and proceeding as before,
\begin{equation*}
    \lim\limits_{t\to\infty}\inf\limits_{\{0\leq\rho\leq R(t)-g(t)\}}u(\rho,t)=\lim\limits_{t\to\infty}\inf\limits_{\{\frac{c_*}{2}t\leq\rho\leq R(t)-g(t)\}}u(\rho,t)\geq \lim\limits_{t\to\infty}\tilde{\Phi}_{c_0}(-g(t)+\rho_3)=m.
\end{equation*}
Once more, since the above holds for all $m\in (0,1)$, we get~\eqref{LowerSpreedingSpeed}.
\end{proof}

\begin{corollary}
    Under the hypotheses of Theorem~\ref{ThConvergenceTravellingWave}, formula~\eqref{u-to-tw-equation-main-theorem} holds true if we restrict the supremum to the  subdiffusive region.
\end{corollary}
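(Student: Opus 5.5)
The claim is that \eqref{u-to-tw-equation-main-theorem} holds when the supremum is taken only over the subdiffusive region. By this we mean the set of $x$ with $0\le\rho_i(x)\le c_*t-\tfrac3{c_0}\log t-t^{\delta_0}$, the complement of the diffusive and superdiffusive regions already treated. The plan is to show that both $u$ and the shifted traveling wave converge to $1$ uniformly on this set. Their difference then tends to $0$ by the triangle inequality.

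\emph{The solution.} Apply Proposition~\ref{ThmLowerSpredingSpeedBestia} with $g(t):=t^{\delta_0}$, which tends to infinity. This gives
\[
\lim_{t\to\infty}\inf_{\{0\le\rho_i(x)\le c_*t-\frac3{c_0}\log t-t^{\delta_0}\}}u(x,t)=1 .
\]
Since $0\le u\le1$, we get $\sup|1-u|\to0$ on this region. If the region is slightly shifted by the constant $\rho_0$ relative to $R(t)$, that only changes $g$ by a constant, and $g$ still diverges.

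\emph{The traveling wave.} Set $\beta=\tfrac1{\lambda_0}\log\alpha_\infty$, the same constant as in the diffusive region. On the subdiffusive region,
\[
\rho_i(x)-\big[c_*t-\tfrac3{c_0}\log t+\beta\big]\le -t^{\delta_0}-\beta .
\]
Because $\Phi_{c_0}$ is decreasing with $\Phi_{c_0}(-\infty)=1$, it follows that
\[
1\ge \Phi_{c_0}\big(\rho_i(x)-[\cdots]\big)\ge \Phi_{c_0}(-t^{\delta_0}-\beta)\to1 .
\]

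\emph{Conclusion.} Combining the two limits, $|u-\Phi_{c_0}(\cdot)|\le (1-u)+(1-\Phi_{c_0}(\cdot))\to0$ uniformly on the subdiffusive region.

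There is no real obstacle here. The only care needed is that the left edge of the diffusive region used in \eqref{u-to-tw-diffusive-zone} matches the right edge of the subdiffusive region, with the same $t^{\delta_0}$ and the same constant $\beta$. Otherwise the three corollaries would not patch together into \eqref{u-to-tw-equation-main-theorem} on the whole set $\{\rho_i(x)\ge0\}$.
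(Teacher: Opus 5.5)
Your proposal is correct and follows the paper's proof: the paper likewise shows that $1-\Phi_{c_0}\big(\cdot-[c_*t-\tfrac{3}{c_0}\log t+\beta]\big)$ is bounded on the subdiffusive region by $1-\Phi_{c_0}(-t^{\delta_0}-\tfrac1{\lambda_0}\log\alpha_\infty)\to0$, using monotonicity of $\Phi_{c_0}$ and $\Phi_{c_0}(-\infty)=1$. It then takes $u\to1$ uniformly there from Proposition~\ref{ThmLowerSpredingSpeedBestia} with $g$ of order $t^{\delta_0}$ (up to the constant $\rho_0$), and combines the two limits as you do.
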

\begin{proof}
Since $\Phi_{c_0}$ is a decreasing continuous function and $\displaystyle\lim_{s\to-\infty}\Phi_{c_0}(s)=1$,
\begin{equation*}
		\lim\limits_{t\to\infty}	\big\| 1-\Phi_{c_0}\big(\cdot-[c_*t-\tfrac{3}{c_0}\log t+\beta]\big)\big\|_{L^{\infty}(0, R(t)-(\rho_0+t^{\delta_0}))}=\lim\limits_{t\to\infty}\big(1-\Phi_{c_0}(-t^{\delta_{0}}-\tfrac{1}{\lambda_0}\log \alpha_{\infty})\big)=0.
\end{equation*}
Besides, by Proposition~\ref{ThmLowerSpredingSpeedBestia}, we have that
\begin{equation*}
    \lim\limits_{t\to\infty}\inf\limits_{\{x\in\Hd:\,0\leq \rho_i(x)\leq R(t)-(\rho_0+t^{\delta_0})\}} u(x,t)=1,
\end{equation*}
whence
\begin{equation*}
    \lim\limits_{t\to\infty}\sup\limits_{ \{x\in\Hd: \,0\leq \rho_i(x)\leq c_* t-\tfrac{3}{c_0}\log t-t^{\delta_0}\}}\big|u(\rho_i(x),t)
    -\Phi_{c_0}\big(\rho_i(x)-[c_*t-\tfrac{3}{c_0}\log t+\beta]\big)\big|=0.\qedhere
\end{equation*}
\end{proof}

%%%%%%%%%%%%%%%%%%%%%%%%%%%%%%%%%%%%%%%%%%%%%%%%%%
% ACKNOWLEDGMENTS
%%%%%%%%%%%%%%%%%%%%%%%%%%%%%%%%%%%%%%%%%%%%%%%%%%%%%%%%%%%%%
\section*{Acknowledgments}

This research was finantially supported by MICIU/AEI/10.13039/501100011033 (Spain) through grants CEX2023-001347-S, RED2022-134784-T, RED2024-153842-T (all three authors), PID2020-113596GB-I00, PID2023-150166NB-I00 (MG), PID2019-110712GB-I00 (IG), PID2020-116949GB-I00 (FQ), and PID2023-146931NB-I00 (IG and FQ).

All three authors received also financial support from the Madrid Government (Comunidad de Madrid – Spain), under the multianual Agreement with UAM in the line for the Excellence of the University Research Staff in the context of the V PRICIT (Regional Programme of Research and Technological Innovation).
	
%%%%%%%%%%%%%%%%%%%%%%%%%%%%%%%%%%%%%%%%%%%%%%%%%%%%%%
%          7. REFERENCES SECTION
%%%%%%%%%%%%%%%%%%%%%%%%%%%%%%%%%%%%%%%%%%%%%%%%%%%%%%

\end{document}